\documentclass{amsart}
\usepackage{preamble}

\title[Narrow escape problems beyond spherical holes: slits and tubes]{Narrow escape problems beyond spherical holes:\\ slits and tubes}
\author[Carillo]{Louis Carillo\\
\small{ CERMICS, CNRS, ENPC, Institut Polytechnique de Paris, Marne-la-Vallée, France. \\
MATHERIALS project-team, Inria Paris, France.}} 
\begin{document}

\begin{abstract}
    The narrow escape problem consists in characterizing the exit event of 
    a Brownian motion trapped inside a domain with reflecting boundary and small absorbing holes.
    Using recent advances in the quasi-stationary distribution approach, 
    we first give an explicit set of hypotheses that allows
    one to derive the asymptotics of the exit time and of the exit hole distribution. 
    We then apply it in dimension three to several hole shapes: 
    spherical holes, thin spheroidal holes shrinking to curves on the boundary (slits),
    and holes located at the end of thin tubes attached to the domain.
    We illustrate the accuracy of the results with numerical simulations, 
    and extend them to higher dimensions.
\end{abstract}
\maketitle

\tableofcontents

\section{Introduction}

The narrow escape problem was initially introduced in biology,
in the context of the diffusion of molecules in a cell \cite{HolcmanSchuss04},
in particular to model the diffusion of neurotransmitters in a synapse.
This physical phenomenon can be modeled by a reflected Brownian motion constrained to a domain,
with a small absorbing part on its boundary,
modeling the exit of the molecule, or its reaction with a receptor.
Under the assumption that the absorbing part is small,
the goal is to compute the mean exit time of the molecule
and, when there are several absorbing parts,
the probability of exiting through each of them.
This problem has attracted interest both in the physics community \cite{Singer2006, Benichou2008, GomezCheviakov2015, Grebenkov2024}
and in the mathematical community \cite{AMMARI201266, ChenFriedman2011, neck, Tony2024, Hsu2025b}.
In this work,
we extend the contributions of \cite{Tony2024, nous, narrow2d}
to study different shapes of the absorbing part,
within a similar framework: the quasi-stationary approach.
This introduction is organized as follows.
First, we define the basic setting of the narrow escape problem
and the quasi-stationary distribution in \cref{sub:intro qsd};
then we describe the various hole shapes studied in this work and their connection to the literature in \cref{sub:literature};
finally, the contributions and the outline of the paper are given in \cref{sub:outline}.

\subsection{Introduction to the narrow escape problem and the quasi-stationary distribution}
\label{sub:intro qsd}
In this section, 
we briefly introduce the mathematical fundamentals of the narrow escape problem 
and justify the quasi-stationary approach we use in the rest of the paper.
Let $\Omega \subset \real^3$ be a smooth bounded domain with boundary $\partial \Omega$.
Denote by $(X_t)_{t \geq 0}$ the stochastic process modeling the diffusion of
a particle trapped inside $\Omega$ with reflecting boundary conditions on $\partial \Omega$.
The goal is to study the exit of the particle through a small absorbing part $\Gde \subset \partial \Omega$,
where $\varepsilon > 0$ controls the size of~$\Gde$.
Explicitly,
this exit can be characterized by a pair of random variables: the exit time
\begin{equation}\label{eq:exit time}
    \tau_\varepsilon = \inf\{ t > 0 \, | \, X_t \in \overline{\Gde}\},
\end{equation}
and the exit position 
\begin{equation}\label{eq:exit position}
    X_{\tau_\varepsilon} \in \overline{\Gde}.
\end{equation}

\begin{figure}
    \centering
    \includegraphics[width=0.5\textwidth]{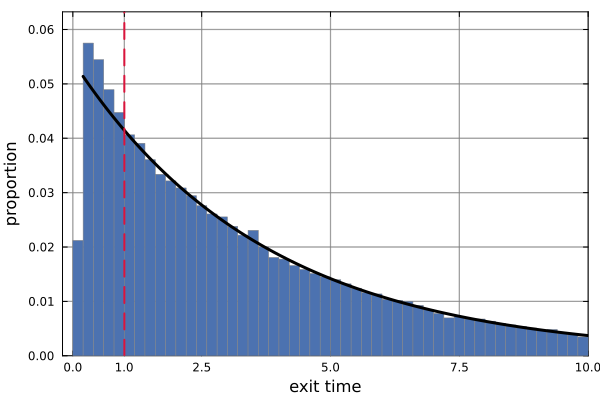}
    \caption{Histogram of the exit time of $100\,000$ particles in a unit ball of dimension $3$
    with $2$ absorbing spherical caps of angle $0.2$ radians on its boundary.
    Notice the exponential tail of the distribution, drawn in black,
    typical of the exit time under the quasi-stationary distribution.
    The vertical red line represents the time from which one can heuristically
    consider the particles to be distributed according to the quasi-stationary distribution.
    In this case, less than $20\%$ of the particles have escaped before this time.
    This proportion shrinks to $0$ in the limit $\varepsilon \to 0$ (see \cite[Section 4.2]{nous}
    for an illustration of this phenomenon).}
    \label{fig:qsd}
\end{figure}

In general,
both of these random variables depend on the initial condition $X_0$ of the process $(X_t)_{t \geq 0}$.
To overcome this difficulty,
in this work we consider the initial condition to be distributed according to the quasi-stationary distribution~$\nu_\varepsilon$.
The quasi-stationary distribution $\nu_\varepsilon$ is a probability measure on~$\Omega$ that is stationary under the dynamics
conditioned on the event that the particle has not escaped yet \cite{ColletQSD, DiGesu2016}.
It has been shown in \cite[Theorem 1.1, Remark 1.2]{nous} that $\nu_\varepsilon$ exists
in a wide class of narrow escape problems and that,
for any initial condition $X_0 \in \Omega$,
\begin{equation*}
    \mathrm{Law}(X_t \, | \, t < \tau_\varepsilon) \underset{t \to +\infty}{\longrightarrow} \nu_\varepsilon.
\end{equation*}
Hence,
at large times,
the law of the process $(X_t)_{t \geq 0}$, conditioned on the fact that it has not escaped yet,
tends to the quasi-stationary distribution (see \cref{fig:qsd} for an illustration).
This relaxation towards $\nu_\varepsilon$ occurs on a time scale that stays bounded
as $\varepsilon \to 0$,
and thus is negligible compared to the exit time $\tau_\varepsilon$, which diverges in this limit.
Additionally,
starting from $\nu_\varepsilon$,
\cite[Proposition 1.3]{nous} shows that both
$\tau_\varepsilon$ and $X_{\tau_\varepsilon}$ can be derived from $\nu_\varepsilon$
(see their explicit relation in \cref{sub:qsd-to-narrow escape}).
Consequently,
the goal of this paper is to study the quasi-stationary distribution $\nu_\varepsilon$,
for different shapes of the absorbing part $\Gde$, and to derive the asymptotic behavior of the exit time~$\tau_\varepsilon$ and exit position~$X_{\tau_\varepsilon}$
in the limit $\varepsilon \to 0$.

\subsection{Hole shapes and bibliography review}
\label{sub:literature}
Let us now describe the different hole shapes studied in this work,
which model various physical phenomena; see the global illustration in \cref{fig:hole shapes}.

\begin{figure}
    \centering
    \includegraphics[width=0.9\textwidth]{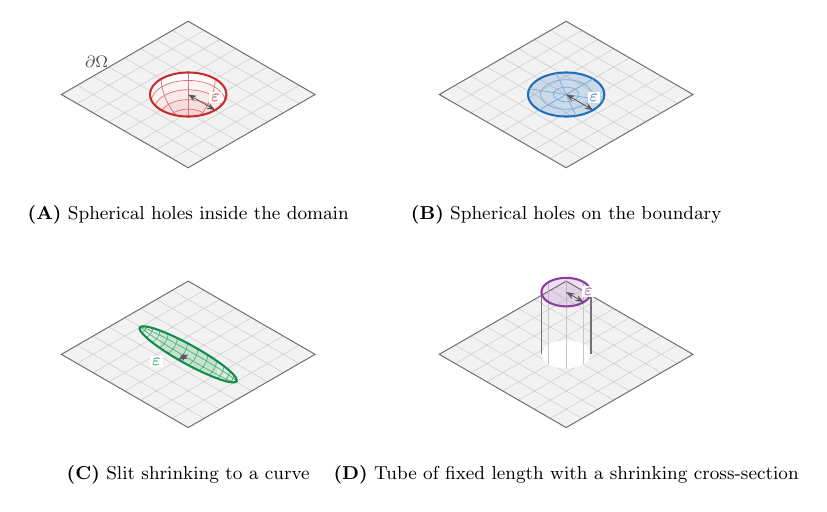}
    \caption{Illustration of the different hole shapes considered in this work. 
    The gray plane represents the boundary of the domain $\partial \Omega$,
    while the colored areas are the holes~$\Gde$.
    The parameter $\varepsilon$ illustrates how the holes shrink.
    }
    \label{fig:hole shapes}
\end{figure}
\paragraph{The narrow escape problem} is the standard setting introduced by \cite{HolcmanSchuss04},
where the holes are geodesic disks on the boundary of the domain that shrink to a point as $\varepsilon \to 0$.
It is represented in blue in \cref{fig:hole shapes} (B).
Some notable contributions in the literature on this version of the
problem are~\cite{wardold, Singer2008, Singer2006, Benichou2008, AMMARI201266, ChenFriedman2011, Tzou2021}
which all focus on the mean exit time or on the exit time from a fixed starting point.
The quasi-stationary distribution approach has been used in \cite{narrow2d}
to study the exit time and exit position in dimension $2$.
Notice two other contributions \cite{nous, Tony2024}, which use the quasi-stationary distribution
but for slightly modified holes that are half-spheres inside the domain;
see \cref{fig:hole shapes} (A).
One goal of this paper is to understand the difference between the two settings.

\paragraph{The narrow slit problem} is a variant of the narrow escape problem
where one considers a hole that shrinks to a curve of finite length
on the boundary of the domain instead of a point.
In practice,
this is built from a prolate spheroid that shrinks to a curve segment (see \cref{fig:hole shapes} (C)).
Notice that this problem is fundamentally different, as the hole
shrinks to an object of dimension
$1$. 
Heuristically,
one could expect the mean exit time to diverge logarithmically,
as in the two-dimensional narrow escape problem,
pointing to the importance of the codimension of the hole instead of the dimension of the domain.

The geometry behind the narrow slit problem is not new and
has been studied in a wide range of applications,
from electromagnetism \cite{slot} and mechanics \cite{BORODACHEV1974108}
to hydrodynamics \cite{Slender-ribbontheory} and acoustics \cite{1938_Morse}.
Yet, to the best of our knowledge,
this regime has not been studied in the context of the narrow escape problem.
However,
using the results on ellipsoidal holes from~\cite{narrowescapepart1, Tzou2021},
we can gain insight into the behavior of the narrow slit problem.
Formally,
taking the ellipticity of the ellipsoid to scale as $\varepsilon^{-1}$
in~\cite[Theorem 1.3]{Tzou2021} leads to a logarithmic divergence of the mean exit time,
which confirms the heuristic prediction above.
Finally, 
note that this problem can be tackled with capacity methods 
to obtain the scaling in $\varepsilon$ of the mean exit time (see~\cite{Felli2021}).

\paragraph{The narrow transition problem} is the variant illustrated in purple in
\cref{fig:hole shapes} (D),
where one considers a tube of fixed length with a shrinking cross-section.
The exit hole is at the end of the tube.
The narrow escape community has considered this problem since \cite{Schussreview};
see a list of recent contributions on this topic in \cite{Berezhkovskii, neck, Hsu2025a, Hsu2025b, yetanothertube}.
Let us also point out the strong connection between this problem and Helmholtz resonators in 
acoustics/electrostatics \cite{Ingard, HOWE1976427, JIMBO1989322, ARRIETA199124}.

Another interpretation of this problem arises from statistical physics,
in the context of transitions from one state to another 
(see \cite{Zwanzig} for a discussion of several problems involving Brownian motions trapped in channels, 
and \cite{FRENKEL199926} for entropy-driven phenomena).
Indeed,
if at the end of the tube there is another domain $\Omega'$,
then the escape time through the tube is also the transition time from $\Omega$ to $\Omega'$.
Transition times are fundamental objects in the study of metastable systems \cite{KRAMERS1940284, reviewkramers, voter}.
Usually, the metastability is induced by an energetic barrier between $\Omega$ and $\Omega'$,
hence the transition time is trivially linked by a factor $2$ to the escape time from $\Omega$ (see \cite[Remark 1]{TonyBoris2022}).
In this work, where $\Omega$ and $\Omega'$ are linked by a narrow tube,
we see that this is not the case,
and the transition time can be much larger than the escape time.

The analogy between the narrow transition problem and the energetic barrier framework can be pushed further.
The transition time for the latter is given by the Eyring--Kramers law \cite{KRAMERS1940284, reviewkramers, TonyBoris2022},
\[
    \mathbb{E}[\tau] \sim \frac{2\pi}{|\lambda_1(z)|} \sqrt{\frac{|\det \nabla^2 V(z)|}{\det \nabla^2 V(x)}} \exp\left(\frac{V(z) - V(x)}{h}\right),
\]
where $h$ is proportional to the temperature and plays the role of the small parameter, $V$ is the energetic potential,
$\lambda_1$ is the negative eigenvalue of the Hessian of $V$ at the saddle point $z$ between the two wells,
and $x$ is the local minimum of $V$ in the well where the process starts.
Formally,
the ratio of the Hessians can be seen as a measure of the volume of the well over the volume of the transition region 
$\frac{|\Omega|}{\varepsilon^2 L}$, where $L$ is the length of the tube and $\varepsilon^2$ its cross-sectional area.  
The factor $|\lambda_1|^{-1}$ is a measure of the rate to traverse the transition region, here $L^2$.
Taking the exponent to $0$ as there are no energetic barriers in the narrow transition problem,
the formula for the mean exit time can be understood as 
\[
    \mathbb{E}[\tau] \sim  L \frac{|\Omega|}{\varepsilon^2}.
\]
In \cref{sub:outline}, 
we see that these heuristics lead to the correct scaling behavior for the mean exit time in the narrow transition problem, 
making a link between metastability arising from an energetic constraint and from an entropic constraint. 

\subsection{Contribution and outline of the paper}
\label{sub:outline}
The contribution of this work is to derive the exit time distribution and the exit hole distribution
for the narrow escape problem (in two variants, A and B), the slit problem (C), and the
transition problem (D),
within a single quasi-stationary distribution framework.
Let us emphasize that the novelty of this work is to demonstrate that the framework introduced in
\cite{Tony2024} can be applied to a wide range of hole shapes,
and to derive asymptotically the exit time distribution and the exit hole distribution from the quasi-stationary distribution,
in the limit $\varepsilon \to 0$.
To do this, 
new quasi-modes have been built to approximate the quasi-stationary distribution for the different hole shapes. 
This technique is explained in \cref{sub:qsd-to-narrow escape}.
The results of this paper are gathered in \cref{tab:summary}, which summarizes the different hole shapes and
the asymptotics of their mean exit time and the exit hole distribution in a reference setting (one hole for the mean exit time, two holes for the exit hole) in the limit $\varepsilon \to 0$.

For the sake of comparison,
we also include a column stating how the surface area of each hole scales with $\varepsilon$.
Notice that the results in \cref{tab:summary} are consistent with the heuristics on the mean exit time developed in \cref{sub:literature}.
Furthermore,
we make explicit the difference between the spherical holes on the boundary studied in \cite{narrow2d}
and the spherical holes studied in \cite{Tony2024, nous}, which are half-spheres inside the domain.
The scaling of the mean exit time differs by a factor $\pi/2$ between the two settings,
giving a rigorous explanation of \cite[Remark 1.5]{nous}.

\begin{table}[h!]
    \centering
    \renewcommand{\arraystretch}{2.2}
    \begin{tabular}{
        @{} c
        >{\raggedright\arraybackslash}p{4.7cm}
        c
        c
        c
        c @{}
    }
        \toprule
        Hole type & Hole description & \makecell{Scaling of the\\surface area} & \makecell{Mean exit time} & \makecell{Exit hole} & \makecell{Reference} \\
        \midrule
        A & Spherical holes inside the domain & $\varepsilon^2$ & $\dfrac{|\Omega|}{2\pi\varepsilon}$ & $\dfrac{1}{3}$ & \cref{prop:narrow escape spherical inside} \\
        B & Spherical holes on the boundary & $\varepsilon^2$ & $\dfrac{|\Omega|}{4\varepsilon}$ & $\dfrac{1}{3}$ & \cref{prop:quasi-mode spherical holes} \\
        C & Prolate spheroid around a curved segment of length $2\ell$ on $\partial\Omega$, half-width
$\varepsilon$ & $\varepsilon \ell$ & $\dfrac{|\Omega|}{2\pi \ell}|\log(\varepsilon)|$ & $\dfrac{1}{2}$ & \cref{prop:narrow escape slit} \\
        D & End of a tube of fixed length $L$ with shrinking cross-section of area $\pi \varepsilon^2$ & $\varepsilon^2$ &
$\dfrac{|\Omega| L}{\pi \varepsilon^2}$ & $\dfrac{1}{5}$ & \cref{prop:quasi-mode tube} \\
        \bottomrule
    \end{tabular}
    \caption{Summary of the different hole shapes considered in this work and their properties.
    The notation A, B, C and D refers to the illustrations in \cref{fig:hole shapes}.
    See \eqref{eq:spherical holes}, \eqref{eq:geodesic spherical holes}, \eqref{eq:slit def} 
    and \eqref{eq:tube def} in \cref{sub:hole shapes notation} for the precise definitions of the holes.
    The mean exit time is $\mathbb{E}_{\nu_\varepsilon}[\tau_\varepsilon]$, the expectation of \eqref{eq:exit time} under the quasi-stationary distribution $\nu_\varepsilon$.
    The exit hole column refers to the probability of exiting through a hole of parameter $\varepsilon$ in a system with another hole of parameter $2\varepsilon$ (with the same $\ell$ for C and the same $L$ for D).
    The reference column gives the proposition in the paper where the result is stated and proved.
    Each result is given as its asymptotic equivalent in the limit $\varepsilon \to 0$.
    }
    \label{tab:summary}
\end{table}

The results presented in \cref{tab:summary} are believed to be derivable in higher dimensions,
at the cost of lengthy computations. 
This is one goal of the numerical experiments in \cref{sec:numerics}, 
which aim to illustrate the results of this work and to explore the higher-dimensional case.

The paper is organized as follows. 
In \cref{sec:notation and results},
we introduce the main notation and explain the quasi-stationary distribution framework.
Then,
in \cref{sec:several narrow escape problems},
we present our results on the spherical holes A and B.
In~\cref{sec:slit}, 
we construct a quasi-mode for the narrow slit problem and derive the asymptotics of the exit time and exit hole distributions.
The same is done for the narrow transition problem in \cref{sec:tubular holes}.
Finally, 
we present numerical experiments in \cref{sec:numerics} to illustrate the results of this work and explore the higher-dimensional case.
Additionally, 
in \cref{app:poincare tube}, we discuss the Poincaré--Wirtinger inequality uniform in $\varepsilon$ in the several geometries of this work. 
We then provide the proof of \cref{thm:narrow escape} in~\cref{app:proof}.
\Cref{sec:app:result Green} gathers the fundamental properties of the Green function considered in this work, 
and \cref{sec:app:integrals} is a technical section on the computation of several integrals.

\section{Notation and detailed method}\label{sec:notation and results}
In this section 
we introduce the mathematical notation, 
first in general in \cref{sub:notation}, 
then for the different hole shapes in \cref{sub:hole shapes notation}. 
Finally,
in \cref{sub:qsd-to-narrow escape}, 
we explain how to use the quasi-stationary distribution to derive the exit time and exit hole distributions,
using a quasi-mode. 

\subsection{Notation}
\label{sub:notation}
In this section,
we define the notation and the mathematical concepts used throughout this work.
We first introduce geometric considerations, before introducing the operator associated with the process $(X_t)_{t \geq 0}$ defined in \eqref{eq:reflected bm}.
Note that in this paper, 
four geometric settings are considered. 
For simplicity, we use the same notation in all four settings, 
with the understanding that the underlying geometric setting is clear from the context.

We recall that $\Omega$ is a smooth bounded domain of $\real^3$ with boundary $\partial \Omega$.
Let $(X_t)_{t \geq 0}$ be the following stochastic process evolving in $\overline{\Omega}$:
\begin{equation}\label{eq:reflected bm}
    \mathrm{d} X_t = \sqrt{2} \, \mathrm{d}B_t - n(X_t) \, \mathrm{d}L_t,
\end{equation}
where $(B_t)_{t \geq 0}$ is a standard Brownian motion in $\real^3$,
$(L_t)_{t \geq 0}$ is the local time of $(X_t)_{t \geq 0}$ on $\partial \Omega$
(see~\cite{Tanaka, ReflectedBrownian}), and
$n$ is the outward unit normal vector to $\partial \Omega$.
This process is a reflected Brownian motion in $\Omega$.
For~$N \geq 1$ and~$\varepsilon > 0$,
we consider a family of $N$ (relatively) open connected sets $\Gke \subset \overline{\Omega}$, $k \in \{ 1, \ldots, N\}$, called the holes,
with a small parameter~$\rek > 0$ that controls their size  (for the \eqref{eq:slit def} holes, $w_\varepsilon^{(k)}$ plays a similar role).
We write~$\Gde = \bigcup_{k=1}^{\ndoors} \Gke$ for their union.
The small hole asymptotics is defined by the limit $\varepsilon \to 0$,
where~$\rek \to 0$ and $|\Gke| \to 0$. In this work,
we emphasize that we only consider the limit where the holes shrink but do not move.
Hence, we always assume that the holes remain separated by a distance bounded below uniformly in $\varepsilon$.

\paragraph{The generator of $(X_t)_{t \geq 0}$.}
The generator of the process $(X_t)_{t \geq 0}$, defined in \eqref{eq:reflected bm}, killed on $\Gde$ is the Laplacian $\Delta$ on $\Omega_\varepsilon$
with Dirichlet boundary conditions on $\Gde$ and Neumann boundary conditions on $\Gne$.
In the following, we work with the nonnegative operator $\mathcal{L}_\varepsilon = -\Delta$,
whose domain is
\begin{equation}\label{eq:domain L}
    \mathcal{D}(\mathcal{L}_\varepsilon) = \Bigl\{ u \in H^1(\Omega_\varepsilon)\, | \, \Delta u \in L^2(\Omega_\varepsilon), \, u = 0 \text{ on } \Gde , \, \partial_n u = 0 \text{ on } \Gne \Bigr\},
\end{equation}
where the normal derivative $\partial_n u$ is defined by duality on $H^{-\frac{1}{2}}(\partial \Omega_\varepsilon)$ (see \cref{app:proof} for an explicit definition).
The operator $\mathcal{L}_\varepsilon$ can be defined as the self-adjoint operator associated with the closed form
\begin{equation}\label{eq:def bilinear form}
    \mathfrak{q}_\varepsilon(u, v) = \int_{\Omega_\varepsilon} \nabla u \cdot \nabla v,
    \qquad u, v \in H^1_{\Gde}(\Omega_\varepsilon),
\end{equation}
with form domain
\[
    H^1_{\Gde}(\Omega_\varepsilon) = \left\{ u \in H^1(\Omega_\varepsilon) \, | \, u = 0 \text{ on } \Gde \right\}.
\]
It is classical that
$\mathcal{L}_\varepsilon$ is self-adjoint,
with a compact resolvent (see \cite[Theorem 1]{Savare} and the Rellich--Kondrachov theorem),
so it has a discrete spectrum.
Furthermore,
its principal eigenvalue is positive and
associated with a positive and continuous eigenfunction \cite[Theorem 8.38]{Gilbrag}.
The density of the quasi-stationary distribution $\nu_\varepsilon$,
introduced in \cref{sub:intro qsd},
is this principal eigenfunction \cite[Theorem 1.1]{nous}.
Explicitly,
denoting this density by~$v_\varepsilon$,
it holds that
\begin{equation}\label{eq:edp density}
    \left\{
        \begin{aligned}
            -\Delta v_\varepsilon & = \lambda_\varepsilon v_\varepsilon && \text{ in } \Omega_\varepsilon, \\
            \partial_n v_\varepsilon & = 0 && \text{ on } \Gne,\\
            v_\varepsilon & = 0 && \text{ on } \Gde,\\
            \int_{\Omega_\varepsilon} v_\varepsilon & = 1.
        \end{aligned}
    \right.
\end{equation}

\subsection{Notation for the different hole shapes}
\label{sub:hole shapes notation}
With this notation,
the hole shapes considered in this work (see \cref{fig:hole shapes} and \cref{sub:literature}) are defined as follows.
For each of the constructions considered in~\eqref{eq:spherical holes} and~\eqref{eq:slit def},
removing the holes from (or, in~\eqref{eq:tube def}, attaching the tubes to) $\Omega$ produces a new domain $\Omega_\varepsilon$,
whose boundary splits into the union
\[
    \partial \Omega_\varepsilon = \overline{\Gde} \cup \overline{\Gne},
\]
where $\Gne$ denotes the reflecting (Neumann) part of the boundary,
inherited from $\partial \Omega$.
Fix $k \in \{1, \ldots, N \}$, $\varepsilon > 0$ 
and $\rek > 0$.

\paragraph{Spherical holes.}
The spherical holes are defined as 
\begin{equation}\label{eq:spherical holes}
\tag{Type A}    \Gke = \partial B (\xk, \rek) \cap \Omega,
\end{equation}
where $\xk$ is a point in $\partial \Omega$,
similarly to \cite{nous}.
We assume that the points $\xk$ are distinct; more precisely, that there exists $\eta > 0$ such that the balls $B(\xk, \eta)$ are disjoint.
Then,
for spherical holes on the boundary,
denote by $d_{\partial \Omega}(\cdot, \cdot)$ the geodesic distance in $\partial \Omega$,
and set
\begin{equation}\label{eq:geodesic spherical holes}
\tag{Type B}    \Gke = \{ x \in \partial \Omega\, | \,  d_{\partial \Omega}(x, \xk) < \rek \}.
\end{equation}
In this work, we choose the geodesic distance in \eqref{eq:geodesic spherical holes}
for the sake of simplicity. Using the underlying Euclidean distance instead
is possible with the same approach, and changes only the error terms in the results.

\paragraph{Slit holes.}
For the slit problem,
let $\Hk \subset \partial \Omega$ be a smooth simple arc.
Since these curves are the limiting sets of the holes, which remain separated,
we also assume that the curves $(\mathcal{H}_j)_{j=1}^N$ are pairwise disjoint.
Consider a smooth parametrization of $\Hk$ through a function
$\gamma_k \colon [-\ell_k, \ell_k] \to \Hk$
such that $|\gamma_k'|=1$, so that $\Hk$ has length $2\ell_k$.

To make the shape of the holes explicit,
a local basis is needed to describe the geometry of the slit: the Frenet coordinates.
For $x$ close enough to $\Hk$, the orthogonal projection of $x$ onto $\Hk$,
denoted $\pi_k(x) \in \Hk$, is uniquely defined.
Hence, there exists $s_x \in [-\ell_k, \ell_k]$ such that $\gamma_k(s_x) = \pi_k(x)$.
Notice that $\pi_k(x)$ may lie at the tip of the slit,
for instance if $x = \gamma_k(\ell_k) + t\, \tau_k(\ell_k)$ for some $t>0$,
with $\tau_k(\cdot)$ the tangent vector to the slit $\Hk$ at $\gamma_k(\cdot)$.
By definition of the Frenet frame,
any point $x$ belonging to a small tubular neighborhood of $\Hk$ can be decomposed as
\begin{equation}\label{eq:decomposition X}
    x = \gamma_k(s_x) + t_\xk\, \tau_k(s_x) + \rho_\xk,
\end{equation}
with $t_\xk \in \real$ and $\rho_\xk \perp \tau_k(s_x)$.
In this setting,
for $\varepsilon$ small enough, the hole is defined as
\begin{equation}\label{eq:slit def}
    \tag{Type C}
    \Gke =
    \left\{ x \in \Omega \, \left| \, \frac{(s_x + t_\xk)^2}{(w_\varepsilon^{(k)})^2 + \ell_k^2} + \frac{|\rho_\xk|^2}{(w_\varepsilon^{(k)})^2} = 1\right.\right\}.
\end{equation}
By design,
the sets $\Gke$ are ellipsoids around the slit $\Hk$ in the basis defined in \eqref{eq:decomposition X} that shrink to $\Hk$ as $\varepsilon \to 0$.
The parameter $w_\varepsilon^{(k)}$ is the half-width of the ellipsoid, as it is its minor semi-axis.
Notice that the holes $\Gke \subset \overline{\Omega}$, 
hence the slit holes are the analogue of the \eqref{eq:spherical holes} holes.
A similar construction could be done for slits fully contained in the boundary like in \eqref{eq:geodesic spherical holes}.

Finally, we define the major semi-axis, 
which is useful in the analysis of the slit problem,
 as
\begin{equation}\label{eq:slit major axis}
    a_\varepsilon^{(k)} = \sqrt{\ell_k^2 + \left(w_\varepsilon^{(k)}\right)^2},
    \qquad \text{ so that } \qquad 
    \left(a_\varepsilon^{(k)} - \ell_k\right)\left(a_\varepsilon^{(k)} + \ell_k\right) = \left(w_\varepsilon^{(k)}\right)^2.
\end{equation}

\paragraph{Tubular holes.}
To construct the tubular holes,
pick a point $\xk \in \partial \Omega$
and a bounded connected smooth planar domain $\mathcal{D}_k \subset T_{\xk}(\partial \Omega)$
containing the origin,
where $T_{\xk}(\partial \Omega)$ is the tangent plane to $\partial \Omega$ at $\xk$,
and set
\begin{equation}\label{eq:tube def D}
    \Dke = \rek \, \mathcal{D}_k,
    \qquad \text{ so that } \qquad
    |\Dke| = (\rek)^2 |\mathcal{D}_k| ,
\end{equation}
where $\rek$ plays the role of the radius of the $k$-th tube.
We denote $\reo = \sum_{j=1}^{\ndoors} r_\varepsilon^{(j)}$.

Denote by $n_k$ the outward unit normal to $\partial \Omega$ at $\xk$ so that points close to $\xk$ are written $x = \xk + \rho + s \, n_k$
with $\rho \in T_{\xk}(\partial \Omega)$ and $s \in \real$.
As $\partial \Omega$ is smooth,
there are $r_0 > 0$ and a smooth function~$h_k \colon B(0,r_0) \to \real$
with $h_k(0) = 0$ and $\nabla h_k(0) = 0$,
such that $\partial \Omega$ is locally the graph $s = h_k(\rho)$
and $\Omega$ lies locally in $\{ s < h_k(\rho)\}$.
In particular, uniformly for $|\rho| \leq C \rek$,
\begin{equation}\label{eq:tube graph}
    h_k = \mathrm{O}\bigl((\rek)^2\bigr),
    \qquad
    \nabla h_k = \mathrm{O}(\rek).
\end{equation}
For $\varepsilon$ small enough that $\overline{\Dke} \subset B(0,r_0)$,
define the tube as
\begin{equation}\label{eq:tube lim}
    \Tke
    = \bigl\{ \xk + \rho + s \, n_k \; : \; \rho \in \Dke, \; h_k(\rho) < s < L_k \bigr\} ,
\end{equation}
with $L_k > 0$ the length of the tube,
and the holes as
\begin{equation}\label{eq:tube def}
\tag{Type D}
    \Gke
    = \bigl\{ \xk + \rho + L_k \, n_k \; : \; \rho \in \Dke \bigr\} .
\end{equation}
We shall always assume that the tubes never intersect each other or the domain itself.

We denote~$\Ske = \overline{\Omega} \cap \overline{\Tke}$ for $k \in \{1, \ldots, N\}$ the $k$-th tube's mouth, 
or explicitly, for $\varepsilon$ small enough
\begin{equation}\label{eq:def tube mouth}
    \Ske = \left\{ \xk + \rho + h_k(\rho) n_k \, \middle| \, \rho \in \Dke \right\},
\end{equation}
where $h_k$ is the function introduced before \eqref{eq:tube graph}.
To be explicit in this case, the resulting domain $\Omega_\varepsilon$ is constructed by plugging the tubes $\Tke$ into the domain $\Omega$:
\begin{equation}\label{eq:tube domain}
    \Omega_\varepsilon = \Omega \cup \bigcup_{k=1}^N \left(\Tke \cup \Ske\right) \setminus \partial \Ske.
\end{equation}
This problem is fundamentally different from the others, 
as  
$\Gke \not \subset \overline{\Omega}$ 
but also because the domain changes with $\varepsilon$ significantly. 
For instance, 
the uniform interior cone condition does not hold, 
hence classical results like \cite[Theorem 1.2]{Ruiz} do not apply directly in order to obtain the Poincaré--Wirtinger inequality uniformly in $\varepsilon$.
This inequality is proven in \cref{lem:poincare tube} using a different approach.

Finally, we introduce, for all $k \in \{1, \ldots, N\}$, the following quantity
\begin{equation}\label{eq:sk}
    \ske = \sup_{\rho \in \Dke} |h_k(\rho)|,
\end{equation}
so that the length of the tube is at most $L_k + \ske$, 
which can be bounded uniformly in $\varepsilon$.

The \eqref{eq:slit def} and \eqref{eq:tube def} notations are illustrated in \cref{fig:coords}.

\begin{figure}
    \centering
    \includegraphics[width=1.0\textwidth]{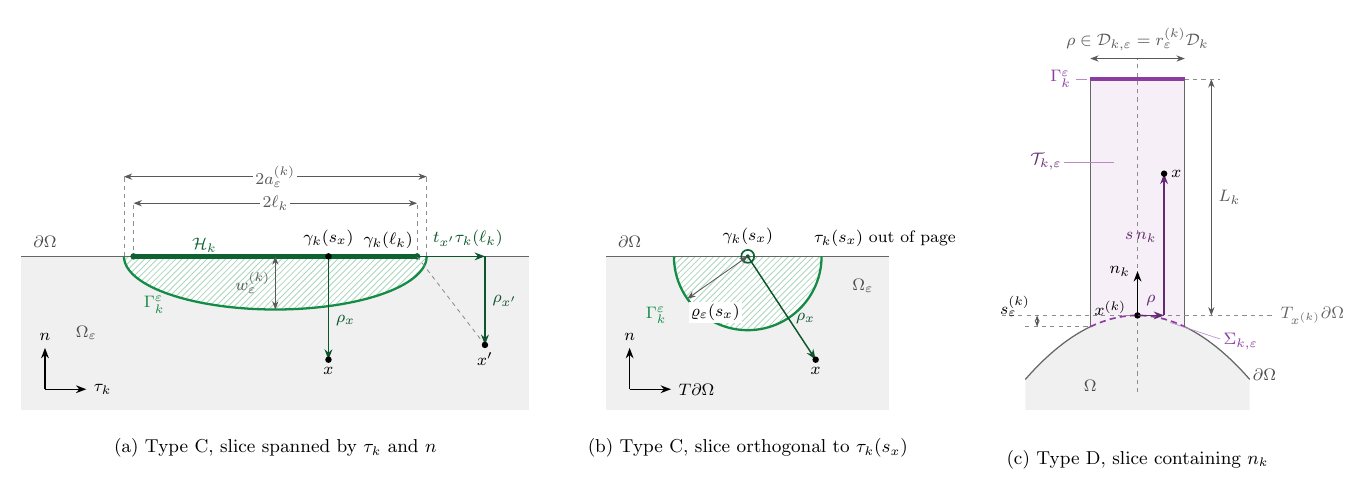}
    \caption{Illustration of the different notations used for the holes \eqref{eq:slit def} and \eqref{eq:tube def}.}
    \label{fig:coords}
\end{figure}
\subsection{From the quasi-stationary distribution to the narrow escape problem}
\label{sub:qsd-to-narrow escape}
In this section, 
we state \cref{thm:narrow escape} that we will apply in different settings throughout the paper, 
which will give us the results claimed in \cref{tab:summary}.

As introduced in \cref{sub:intro qsd}, 
the quasi-stationary distribution $\nu_\varepsilon$ is a key object to study the narrow escape problem. 
Classical results on the quasi-stationary distribution \cite[Proposition 1.3]{nous}
show that, if $X_0 \sim \nu_\varepsilon$, the exit time $\tau_\varepsilon$ and the exit position $X_{\tau_\varepsilon}$ are independent, 
and
\begin{equation}
    \tau_\varepsilon \sim \mathrm{Exp}(\lambda_\varepsilon), \quad \text{ and } \quad 
    \forall k \in \{1, \ldots, N\}, \quad
    \mathbb{P}_{\nu_\varepsilon}\left( X_{\tau_\varepsilon} \in \overline{\Gke} \right) 
    = -\lambda_\varepsilon^{-1} \int_{\overline{\Gke}} \partial_n v_\varepsilon \, \d\sigma. 
\end{equation}
However, 
in a general setting, 
it is impossible to obtain an explicit form for $\nu_\varepsilon$. 
The classical approach is to build a quasi-mode $\varphi_\varepsilon$ 
that verifies a modified version of \eqref{eq:edp density} and leverage the results from it. 
This is the technique applied in \cite{nous}. 
In this work, 
the quasi-mode is built as a sum of contributions of each hole, 
in the form
\begin{equation}\label{eq:def quasi-mode}
    \varphi_\varepsilon = 1 - \sum_{k=1}^{\ndoors} \psi_\varepsilon^{(k)},
\end{equation}
where $\psi_\varepsilon^{(k)}$ is close to $1$ on the $k$-th hole and small on the other holes.
Let us first state the hypotheses needed for the quasi-mode. 

\begin{hypo}\label{hyp:quasi-mode}
    The principal eigenvalue $\lambda_\varepsilon$ of the operator $\mathcal{L}_\varepsilon$ tends to $0$ as $\varepsilon \to 0$:
    \begin{equation}\label{eq:hyp:eig}
        \tag{Eig}
        \lambda_\varepsilon \underset{\varepsilon \to 0}{\longrightarrow} 0.
    \end{equation}
    Furthermore, 
    for each $k \in \{1, \ldots, N\}$,
    there exists a positive number $\Kek$ such that $\Kek \to 0$ as $\varepsilon \to 0$,
    and a function $\psi_\varepsilon^{(k)} \in H^1(\Omega_\varepsilon) \cap C^0(\overline{\Gde})$
    that verify:
    \begin{equation}\label{eq:hyp:PDE quasi-mode}
        \tag{PDE}
        \left\{
            \begin{aligned}
            \Delta \psi_\varepsilon^{(k)} & = C \Kek && \text{ in } \Omega_\varepsilon, \\
            \partial_n \psi_\varepsilon^{(k)} & = 0 && \text{ on } \Gne,\\
            \bigl\|\psi_\varepsilon^{(k)} - \mathbbm{1}_{\overline{\Gke}}\bigr\|_{L^\infty(\Gde)} & = \mathrm{O}(\mathcal{E}_1(\Kall)),
            \end{aligned}
        \right.
    \end{equation}
    with $C > 0$ a constant depending on the volume of the domain and the shape of the holes but not $\varepsilon$, 
    $\Keo = \sum_{k=1}^{\ndoors} \Kek$
    and~$\mathcal{E}_1$ a nonnegative function of the family $\Kall$ such that~$\mathcal{E}_1(\Kall) \to 0$ as $\varepsilon \to 0$.

    Furthermore, 
    for all $k \in \{1, \ldots, N\}$,
    $\psi_\varepsilon^{(k)}$ verifies the following normalization condition  
    \begin{equation}\label{eq:hyp:norm}
        \tag{Norm}
        \|\psi_\varepsilon^{(k)}\|_{L^2(\Omega_\varepsilon)} =  \mathrm{O}(\mathcal{E}_2(\Kall)),
    \end{equation}
    with $\mathcal{E}_2$ a nonnegative function of the family $\Kall$ such that $\mathcal{E}_2(\Kall) \to 0$ as $\varepsilon \to 0$.
\end{hypo}
The two functions $\mathcal{E}_1$ and $\mathcal{E}_2$ are used to quantify the error of the quasi-mode,
and determine the error in the final result of \cref{thm:narrow escape}.
They depend on the whole family $\Kall$, and not only on $\Keo$,
as the error may depend on the relative sizes of the holes (see \cref{sec:tubular holes}).
The Dirichlet condition of \eqref{eq:hyp:PDE quasi-mode} states that 
$\psi_\varepsilon^{(k)}$ is close to $1$ on $\Gke$, 
and that its contribution is negligible on the other holes.

Summing \eqref{eq:hyp:PDE quasi-mode} and \eqref{eq:hyp:norm} over $k$,
and since $\sum_{k=1}^{\ndoors} \mathbbm{1}_{\overline{\Gke}} = 1$ on $\overline{\Gde}$,
the quasi-mode $\varphi_\varepsilon$ defined in~\eqref{eq:def quasi-mode} satisfies
\begin{equation}\label{eq:quasi-mode properties}
    \left\{
        \begin{aligned}
        -\Delta \varphi_\varepsilon & = C \Keo && \text{ in } \Omega_\varepsilon, \\
        \partial_n \varphi_\varepsilon & = 0 && \text{ on } \Gne,\\
        \|\varphi_\varepsilon\|_{L^\infty(\Gde)} & = \mathrm{O}(\mathcal{E}_1(\Kall)),
        \end{aligned}
    \right.
    \qquad \text{ and } \qquad
    \|\varphi_\varepsilon - 1\|_{L^2(\Omega_\varepsilon)} = \mathrm{O}(\mathcal{E}_2(\Kall)).
\end{equation}
Qualitatively,
these hypotheses and this method in general show that 
$\varphi_\varepsilon \simeq |\Omega_\varepsilon| v_\varepsilon$, 
which can be seen from the similarity between \eqref{eq:edp density} and \eqref{eq:quasi-mode properties}.
From the form of the quasi-mode $\varphi_\varepsilon$ in \eqref{eq:def quasi-mode},
one expects that the quasi-stationary distribution $\nu_\varepsilon$ is close to being uniform in $\Omega_\varepsilon$
at small $\varepsilon$.
 
We can now state the key result of this work. 

\begin{thm}\label{thm:narrow escape}
    Consider $(X_t)_{t \geq 0}$, the process introduced in \eqref{eq:reflected bm}. 
    If \cref{hyp:quasi-mode} holds, 
    then the exit time $\tau_\varepsilon$ starting from $\nu_\varepsilon$
    is exponentially distributed with parameter $\lambda_\varepsilon$, and
    \[
        \mathbb{E}_{\nu_\varepsilon}[\tau_\varepsilon] = \frac{1}{\lambda_\varepsilon} = \frac{1}{C \Keo} \left(1 + \mathrm{O}(\mathcal{E}_1(\Kall) + \mathcal{E}_2(\Kall))\right).
    \]
    The exit position is independent of the exit time, and is distributed as, for $k \in \{ 1, \ldots, N \}$
    \[
        \mathbb{P}_{\nu_\varepsilon}(\kappa_\varepsilon = k) = \frac{\Kek}{\Keo} +  \mathrm{O}(\mathcal{E}_1(\Kall) + \mathcal{E}_2(\Kall))
    \]
    where $\Kek$ is given by \cref{hyp:quasi-mode}
    and $\kappa_\varepsilon$ is the index of the exit hole,
    i.e., $\kappa_\varepsilon = k$ if and only if~$X_{\tau_\varepsilon} \in \overline{\Gke}$.
\end{thm}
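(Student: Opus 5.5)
The exponential law of $\tau_\varepsilon$, the identity $\mathbb{E}_{\nu_\varepsilon}[\tau_\varepsilon]=\lambda_\varepsilon^{-1}$, the independence of $\tau_\varepsilon$ and $X_{\tau_\varepsilon}$, and the flux formula $\mathbb{P}_{\nu_\varepsilon}(\kappa_\varepsilon=k) = -\lambda_\varepsilon^{-1}\int_{\overline{\Gke}}\partial_n v_\varepsilon\,\d\sigma$ all come directly from \cite[Proposition 1.3]{nous}. The remaining work is to compare $v_\varepsilon$ with the quasi-mode $\varphi_\varepsilon$ of \eqref{eq:def quasi-mode} through Green's formula, using the properties \eqref{eq:quasi-mode properties}.

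\textbf{Eigenvalue.} I would integrate $v_\varepsilon\,\Delta\varphi_\varepsilon - \varphi_\varepsilon\,\Delta v_\varepsilon$ over $\Omega_\varepsilon$. On $\Gne$ both normal derivatives vanish, and on $\Gde$ we have $v_\varepsilon=0$. This gives
\[
 -C\Keo\int_{\Omega_\varepsilon} v_\varepsilon + \lambda_\varepsilon\int_{\Omega_\varepsilon}\varphi_\varepsilon v_\varepsilon = -\int_{\Gde}\varphi_\varepsilon\,\partial_n v_\varepsilon\,\d\sigma .
\]
Since $v_\varepsilon>0$ in $\Omega_\varepsilon$ and vanishes on $\Gde$, we have $\partial_n v_\varepsilon\le 0$ there. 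Integrating the equation for $v_\varepsilon$ gives $\int_{\Gde}|\partial_n v_\varepsilon| = \lambda_\varepsilon$. Hence the right-hand side is bounded by $\|\varphi_\varepsilon\|_{L^\infty(\Gde)}\lambda_\varepsilon = \mathrm{O}(\mathcal{E}_1)\lambda_\varepsilon$. This step needs a justification of the boundary traces in the $H^{-1/2}$ duality sense. I would approximate by smooth data, or use the weak formulation with the test function $\varphi_\varepsilon$, which lies in $H^1$ and is continuous on $\Gde$.

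Next, using $\int v_\varepsilon = 1$, I would write $\int\varphi_\varepsilon v_\varepsilon = 1 + \int(\varphi_\varepsilon-1)v_\varepsilon$. By Cauchy--Schwarz this error is bounded by $\mathcal{E}_2\|v_\varepsilon\|_{L^2}$. We therefore need $\|v_\varepsilon\|_{L^2(\Omega_\varepsilon)} = \mathrm{O}(1)$, equivalently $v_\varepsilon$ close to $|\Omega_\varepsilon|^{-1}$ in $L^2$. I expect this to be the main obstacle. My plan is to use the Poincaré--Wirtinger inequality with a constant uniform in $\varepsilon$ (\cref{app:poincare tube}; for tubes this is \cref{lem:poincare tube}):
\[
\|v_\varepsilon - \langle v_\varepsilon\rangle\|_{L^2}^2 \le C_P\|\nabla v_\varepsilon\|^2 = C_P\lambda_\varepsilon\|v_\varepsilon\|_{L^2}^2 .
\]
By \eqref{eq:hyp:eig}, for $\varepsilon$ small this yields $\|v_\varepsilon\|_{L^2} \le (1-\sqrt{C_P\lambda_\varepsilon})^{-1}\|\langle v_\varepsilon\rangle\|_{L^2}$, which is bounded since $|\Omega_\varepsilon|$ stays bounded and bounded away from zero. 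Combining these bounds gives $\lambda_\varepsilon\bigl(1+\mathrm{O}(\mathcal{E}_1+\mathcal{E}_2)\bigr) = C\Keo$, which is the claimed asymptotics of $\lambda_\varepsilon$ and hence of $\mathbb{E}_{\nu_\varepsilon}[\tau_\varepsilon]$.

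\textbf{Exit hole.} I would repeat the Green identity with $\psi_\varepsilon^{(k)}$ in place of $\varphi_\varepsilon$, this time keeping the boundary term:
\[
C\Kek\int v_\varepsilon + \lambda_\varepsilon\int\psi_\varepsilon^{(k)} v_\varepsilon = \int_{\Gde}\psi_\varepsilon^{(k)}\,\partial_n v_\varepsilon\,\d\sigma .
\]
Write $\psi_\varepsilon^{(k)} = \mathbbm{1}_{\overline{\Gke}} + \mathrm{O}(\mathcal{E}_1)$ on $\Gde$. Then the right-hand side equals $\int_{\overline{\Gke}}\partial_n v_\varepsilon\,\d\sigma + \mathrm{O}(\mathcal{E}_1\lambda_\varepsilon)$. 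On the left, \eqref{eq:hyp:norm} together with the $L^2$ bound on $v_\varepsilon$ gives $\bigl|\int\psi_\varepsilon^{(k)}v_\varepsilon\bigr| = \mathrm{O}(\mathcal{E}_2)$. Dividing by $-\lambda_\varepsilon$ and using $\lambda_\varepsilon = C\Keo(1+\mathrm{O}(\mathcal{E}_1+\mathcal{E}_2))$ from the previous step, I would obtain
\[
\mathbb{P}_{\nu_\varepsilon}(\kappa_\varepsilon=k) = \frac{\Kek}{\Keo} + \mathrm{O}(\mathcal{E}_1+\mathcal{E}_2).
\]
The sign convention (whether $\Delta\psi_\varepsilon^{(k)} = +C\Kek$ makes the flux term come out negative) needs to be checked carefully at this point, but it only affects signs, not magnitudes.
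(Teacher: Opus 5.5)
Your proposal is correct and follows the paper's proof essentially step by step. Both use the same Green-formula comparison of $v_\varepsilon$ with the quasi-mode, the same uniform Poincar\'e--Wirtinger bound on $\|v_\varepsilon\|_{L^2(\Omega_\varepsilon)}$, and the same measure bound $\bigl|\int g\,\d(\partial_n v_\varepsilon)\bigr|\le\lambda_\varepsilon\|g\|_{L^\infty(\Gde)}$. The paper justifies that bound by identifying $-\lambda_\varepsilon^{-1}\partial_n v_\varepsilon$ with the law of $X_{\tau_\varepsilon}$ from \cite[Proposition 1.3]{nous}.

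The one difference is the exit-hole step: you test against $\psi_\varepsilon^{(k)}$ directly, while the paper uses $\varphi_\varepsilon^{(-k)}=\varphi_\varepsilon+\psi_\varepsilon^{(k)}$. The two are equivalent up to adding the eigenvalue identity.

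The sign you flagged is indeed wrong as written. Green's formula gives $C\Kek+\lambda_\varepsilon\int_{\Omega_\varepsilon}\psi_\varepsilon^{(k)}v_\varepsilon=-\int_{\Gde}\psi_\varepsilon^{(k)}\,\partial_n v_\varepsilon\,\d\sigma$. With this minus sign, the flux formula gives the positive value $\Kek/\Keo+\mathrm{O}(\mathcal{E}_1+\mathcal{E}_2)$, as claimed.
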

The proof of this theorem can be performed in a similar manner as in \cite[Theorems 1.4 and 1.6]{nous};
for the sake of completeness, it is given in \cref{app:proof}. 
This proof also relies on the Poincaré--Wirtinger inequality uniform in $\varepsilon$,
which is proven in \cref{app:poincare tube} for the different geometries considered in this work.
Let us finally insist that the techniques developed in this work are limited to the case where a finite number of holes 
are shrinking without moving, 
hence they are well separated in the limit $\varepsilon \to 0$.

\section{Spherical hole shapes}
\label{sec:several narrow escape problems}
In this section, 
we apply \cref{thm:narrow escape} to two different settings: \eqref{eq:spherical holes} and \eqref{eq:geodesic spherical holes},
by constructing a quasi-mode that verifies~\cref{hyp:quasi-mode}, 
to obtain the results claimed in \cref{tab:summary}.
For both of them, 
the main ingredient is the Green function $G$ of the Laplacian on $\Omega$ 
with Neumann boundary condition (see \cref{sec:app:result Green} for results on $G$).
We focus on \eqref{eq:spherical holes} in \cref{sub:sphericalinside},
and then \eqref{eq:geodesic spherical holes} in \cref{sub:spherical}.

\subsection{Spherical holes inside the domain with a center on the boundary}
\label{sub:sphericalinside} 
In this subsection, 
we recall what was done in \cite{nous} for the case of holes defined in \eqref{eq:spherical holes}
for the sake of the comparison.
The authors of \cite{nous} constructed a quasi-mode $\varphi_\varepsilon$ of the form \eqref{eq:def quasi-mode} with
\[
    \psi_\varepsilon^{(k)}(x) = 2 \pi \rek G(x, \xk), \qquad k \in \{1, \ldots, N\},
\]
which verify \cref{hyp:quasi-mode} in this setting with 
$\Kek = \rek$ (so that $\Keo = \reo = \sum_{k=1}^{\ndoors} \rek$), $C = \frac{2\pi}{|\Omega|}$, $\mathcal{E}_1(\Kall) = \Keo |\log(\Keo)|$ and $\mathcal{E}_2(\Kall) = \Keo$ (see \cite[Section 3.1]{nous}).
The continuity on $\overline{\Gde}$ required in \cref{hyp:quasi-mode} is immediate,
as $G(\cdot, \xk)$ is smooth on $\overline{\Omega} \setminus \{\xk\}$ and $\xk \notin \overline{\Gde}$.
They then obtained the following results, which we recall for the sake of comparison.
\begin{prop}\label{prop:narrow escape spherical inside}
    Consider the reflected Brownian motion $(X_t)_{t \geq 0}$ on $\Omega_\varepsilon$ killed on the holes defined in \eqref{eq:spherical holes}, 
    then the mean exit time is given by 
    \begin{equation}\label{eq:lambda epsilon pizza}
        \mathbb{E}_{\nu_\varepsilon}[\tau_\varepsilon] = \frac{|\Omega|}{ 2\pi } \frac{1}{\reo}\Bigl(1 + \mathrm{O}\left(\mathcal{E}(\reo)\right)\Bigr),
    \end{equation}
    with $\mathcal{E} : x \mapsto x |\log(x)|$.
    The exit position is independent of the exit time, and is distributed as, for~$k \in \{ 1, \ldots, N \}$
    \begin{equation}\label{eq:exit position pizza}
        \mathbb{P}_{\nu_\varepsilon}(\kappa_\varepsilon = k) = \frac{\rek}{\reo} +  \mathrm{O}\left(\mathcal{E}(\reo)\right).
    \end{equation}
\end{prop}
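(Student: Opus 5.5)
The plan is to obtain \cref{prop:narrow escape spherical inside} as a direct corollary of \cref{thm:narrow escape}. It then suffices to verify \cref{hyp:quasi-mode} for the functions $\psi_\varepsilon^{(k)} = 2\pi\rek G(\cdot,\xk)$ with $\Kek=\rek$, $C=2\pi/|\Omega|$, $\mathcal{E}_1 = \reo|\log\reo|$ and $\mathcal{E}_2=\reo$. The Neumann Green function is normalized by $-\Delta G(\cdot,y) = \delta_y - 1/|\Omega|$ with zero flux and zero mean (see \cref{sec:app:result Green}). Plugging these choices into \cref{thm:narrow escape} gives $\lambda_\varepsilon^{-1} = \frac{|\Omega|}{2\pi\reo}(1+\mathrm{O}(\reo|\log\reo|))$ and $\mathbb{P}(\kappa_\varepsilon=k)=\rek/\reo+\mathrm{O}(\reo|\log \reo|)$. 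Here the $\mathcal{E}_2$ term is absorbed into $\mathcal{E}_1$.

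\textbf{(Eig) and (PDE).} For (Eig), I will take a test function equal to $0$ near the holes, cut off at scale $\sqrt{r}$, and equal to $1$ elsewhere. Its Dirichlet energy is $\mathrm{O}(r)$, or one can simply use $\varphi_\varepsilon$ itself. Either way the min–max principle gives $\lambda_\varepsilon\to 0$.

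For the interior equation, $\xk\notin\Omega_\varepsilon$ because $\xk$ is the center of the removed ball. Hence in $\Omega_\varepsilon$ we have $\Delta\psi^{(k)}_\varepsilon = 2\pi\rek/|\Omega| = C\Kek$. The Neumann condition on $\Gne\subset\partial\Omega$ holds because $G(\cdot,y)$ has zero normal derivative on $\partial\Omega\setminus\{y\}$.

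For the Dirichlet estimate I use the boundary singularity expansion from \cref{sec:app:result Green}: for $y\in\partial\Omega$,
\[
G(x,y) = \frac{1}{2\pi|x-y|} + \mathrm{O}(|\log|x-y||).
\]
The factor $2\pi$ rather than $4\pi$ comes from reflection across the boundary. On $\Gke$ we have $|x-\xk|=\rek$, so $\psi^{(k)}_\varepsilon = 1+\mathrm{O}(\rek|\log\rek|)$. On $\mathcal{G}^{(j)}_\varepsilon$ with $j\neq k$ the holes are separated, so $G$ is bounded and $\psi^{(k)}_\varepsilon=\mathrm{O}(\rek)$. Together these give $\mathcal{E}_1=\reo|\log\reo|$, using that $x|\log x|$ is increasing for small $x$.

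\textbf{(Norm).} Since $G(\cdot,\xk)\in L^2(\Omega)$ (its singularity is $|x|^{-1}$ in dimension three), we get $\|\psi^{(k)}_\varepsilon\|_{L^2(\Omega_\varepsilon)}\le 2\pi\rek\|G(\cdot,\xk)\|_{L^2(\Omega)} = \mathrm{O}(\reo)$.

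\textbf{Main obstacle.} The delicate step is the uniform boundary expansion of $G$ near $\xk$ with a logarithmic remainder, which must account for the curvature of $\partial\Omega$. I will take it from \cref{sec:app:result Green}, which is known from \cite{nous}. Continuity on $\overline{\Gde}$ is immediate from the smoothness of $G$ away from $\xk$.
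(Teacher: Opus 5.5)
Your proposal is correct and follows the paper's route. The paper obtains this proposition by applying \cref{thm:narrow escape} to exactly the quasi-mode $\psi_\varepsilon^{(k)} = 2\pi\rek G(\cdot,\xk)$ with $\Kek=\rek$, $C=2\pi/|\Omega|$, $\mathcal{E}_1=\reo|\log\reo|$ and $\mathcal{E}_2=\reo$, deferring the verification of \cref{hyp:quasi-mode} to \cite{nous}; your checks of (PDE), (Norm) and continuity via \cref{lem:Green scaling exact} and \cref{lem:Green L2 bound} are the natural ones.

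One small correction: $\varphi_\varepsilon$ itself is not an admissible trial function for (Eig), since it does not vanish on $\Gde$. You would need the positive part of $1-\alpha\sum_k\psi_\varepsilon^{(k)}$ with $\alpha>1$, as in the proof of \cref{prop:narrow escape slit}. Your cutoff test function already suffices, though.
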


\subsection{Spherical holes on the boundary of the domain}
\label{sub:spherical}
In this subsection, we consider holes of type \eqref{eq:geodesic spherical holes}. 
Recall that $d_{\partial \Omega}$ denotes the geodesic distance on $\partial \Omega$. 
Notice that with this hole shape, fully contained in the boundary,
the domain is unchanged: $\Omega_\varepsilon = \Omega$ for all~${\varepsilon > 0}$.

Let us start checking \cref{hyp:quasi-mode} by giving a simple proof that the eigenvalue $\lambda_\varepsilon$ tends to $0$. 
\begin{lem}\label{lem:eigenvalue spherical holes}
    Consider the operator $\mathcal{L}_\varepsilon$ defined on $\Omega_\varepsilon$ with the Dirichlet boundary condition defined in \eqref{eq:geodesic spherical holes},
    then its principal eigenvalue $\lambda_\varepsilon$ tends to $0$ as $\varepsilon \to 0$.
\end{lem}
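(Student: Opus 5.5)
The plan is to use the min--max (Rayleigh quotient) characterization of the principal eigenvalue of the self-adjoint operator $\mathcal{L}_\varepsilon$ associated with the form $\mathfrak{q}_\varepsilon$ in \eqref{eq:def bilinear form}:
\[
    \lambda_\varepsilon = \inf_{u \in H^1_{\Gde}(\Omega)\setminus\{0\}} \frac{\int_\Omega |\nabla u|^2}{\int_\Omega u^2},
\]
recalling that here $\Omega_\varepsilon = \Omega$. It then suffices to exhibit, for each $\varepsilon$, a test function $u_\varepsilon$ vanishing on $\Gde$ whose Rayleigh quotient tends to $0$. The natural candidate equals $1$ away from the holes and is cut off near each centre $\xk$ with a capacity-type profile. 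In dimension three a point has zero capacity, so the Dirichlet energy of such a cutoff can be made arbitrarily small.

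Concretely, fix $\eta>0$ so that the balls $B(\xk,\eta)$ are disjoint. Since $d_{\partial\Omega}(x,\xk) < \rek$ implies $|x-\xk| < \rek$, each hole satisfies $\Gke \subset B(\xk,\rek)$. Set $r_\varepsilon = \max_k \rek$ and, for $r_\varepsilon < \eta$, define
\[
    u_\varepsilon(x) = \prod_{k=1}^N \chi_\varepsilon\bigl(|x-\xk|\bigr),
\]
where $\chi_\varepsilon(r) = 0$ for $r \le r_\varepsilon$, $\chi_\varepsilon(r) = 1$ for $r \ge \sqrt{r_\varepsilon}$ (or any fixed $\delta\le\eta$), and $\chi_\varepsilon$ is linear in between. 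An even better choice is the radial harmonic profile $\chi_\varepsilon(r) = \frac{1/r_\varepsilon - 1/r}{1/r_\varepsilon - 1/\delta}$ on $[r_\varepsilon,\delta]$. This $u_\varepsilon$ is Lipschitz, hence in $H^1(\Omega)$, and it vanishes on $\Gde$, so $u_\varepsilon \in H^1_{\Gde}(\Omega)$.

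Next I would bound the numerator and denominator. With the harmonic profile, integrating over the full ball $B(\xk,\delta)\cap\Omega \subset B(\xk,\delta)$ gives
\[
    \int_\Omega |\nabla u_\varepsilon|^2 \le \sum_k 4\pi \Bigl(\frac{1}{r_\varepsilon}-\frac1\delta\Bigr)^{-1} = \mathrm{O}(r_\varepsilon).
\]
For the denominator, $u_\varepsilon = 1$ outside $\bigcup_k B(\xk,\delta)$, so $\int_\Omega u_\varepsilon^2 \ge |\Omega| - N\tfrac{4}{3}\pi\delta^3 \ge |\Omega|/2$ once $\delta$ is fixed small enough. Hence $\lambda_\varepsilon \le C r_\varepsilon \to 0$. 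Nonnegativity of $\lambda_\varepsilon$ is immediate, and the limit follows.

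The only point requiring care, and the main (mild) obstacle, is confirming that $u_\varepsilon$ vanishes on the whole geodesic disk $\Gke$. This rests on the inequality between Euclidean and geodesic distance, which holds trivially. Since the domain is unchanged, no uniformity issues in $\varepsilon$ arise, unlike the tube case.
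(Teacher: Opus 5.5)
Your argument is correct, and it takes a more direct route than the paper. Both proofs start from the Rayleigh quotient and use $|x-\xk|\le d_{\partial\Omega}(x,\xk)$ to place each geodesic cap inside a Euclidean ball.

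The paper then argues only by inclusion of form domains. Every $H^1$ function vanishing on the half-balls $B(\xk,\rek)\cap\Omega$ lies in $H^1_{\Gde}(\Omega)$. Hence $\lambda_\varepsilon$ is bounded above by the principal eigenvalue of the \eqref{eq:spherical holes} problem, and the paper imports the convergence of that eigenvalue to $0$ from \cite{nous}, via \cref{prop:narrow escape spherical inside}.

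Your harmonic-profile test function vanishes on $\overline{B(\xk,r_\varepsilon)}\cap\Omega$, so it is itself admissible in the paper's comparison space. In effect you prove directly the Type A bound that the paper borrows. This buys you two things:
\begin{itemize}
\item a self-contained proof, with no reliance on \cite{nous};
\item an explicit rate, $\lambda_\varepsilon\le C\max_k\rek$, which has the same order as the final asymptotics $\lambda_\varepsilon\sim 4\reo/|\Omega|$.
\end{itemize}
The paper's route is shorter and purely structural, and it transfers verbatim to any hole contained in a shrinking half-ball. Your construction extends just as easily, and to any dimension $d\ge3$.

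One small slip concerns the linear ramp. Your parenthetical claim that it can run up to any fixed $\delta\le\eta$ is not right as stated. With a fixed outer radius, the ramp has gradient of size $1/\delta$ on a region of volume of order $\delta^3$, so its energy stays of order $\delta$ instead of tending to $0$. That version only works if you also send $\delta\to0$ after $\varepsilon\to0$, or choose $\delta=\sqrt{r_\varepsilon}$, which gives energy $\mathrm{O}(\sqrt{r_\varepsilon})$.

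Your main computation with the harmonic profile and fixed $\delta$ is fine: it gives energy at most $4\pi(1/r_\varepsilon-1/\delta)^{-1}=\mathrm{O}(r_\varepsilon)$ per hole. The lower bound $|\Omega|/2$ on the denominator is also correct, so the conclusion stands.
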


\begin{proof}
    As $\lambda_\varepsilon$ is the principal eigenvalue of $\mathcal{L}_\varepsilon$,
    it minimizes the Rayleigh quotient by the min-max principle:
    \begin{equation*}
        \lambda_\varepsilon = \min_{u \in H^1_{\Gde}(\Omega_\varepsilon) \setminus \{0\}} \frac{\int_{\Omega_\varepsilon} |\nabla u|^2}{\int_{\Omega_\varepsilon} |u|^2}
,
    \end{equation*}
    where $H^1_{\Gde}(\Omega_\varepsilon)$ is the domain of $\mathfrak{q}_\varepsilon$, the quadratic form associated with $\mathcal{L}_\varepsilon$ defined in \eqref{eq:def bilinear form}.
    This shows that $\lambda_\varepsilon$ is nonnegative.
    For every $k \in \{1, \ldots, N\}$, 
    as $\partial \Omega$ is smooth, 
    for all $x$ in a neighborhood of $\xk$, it holds
    \[
        |x-\xk| \leq d_{\partial \Omega}(x, \xk).
    \]
    Therefore, 
    one has that $\Gke \subset \overline{B(\xk, \rek)} \cap \partial \Omega$.
    Consider now the space of the $H^1(\Omega_\varepsilon)$ 
    functions that vanish on half-balls inside the domain $H^1_{\mathcal{B}_\varepsilon}(\Omega_\varepsilon)$ with
    \[
        \mathcal{B}_\varepsilon = \bigcup_{k=1}^{\ndoors} B(\xk, \rek) \cap \Omega_\varepsilon.
    \]
    It is clear, by extending the functions by $0$, 
    that $H^1_{\mathcal{B}_\varepsilon}(\Omega_\varepsilon) \subset H^1_{\Gde}(\Omega_\varepsilon)$,
    therefore, 
    \begin{equation}\label{eq:rayleigh spherical holes}
        \lambda_\varepsilon 
        \leq \min_{u \in H^1_{\mathcal{B}_\varepsilon}(\Omega_\varepsilon) \setminus \{0\}} \frac{\int_{\Omega_\varepsilon} |\nabla u|^2}{\int_{\Omega_\varepsilon} |u|^2}
        = \min_{u \in H^1_{\mathcal{B}_\varepsilon}(\Omega_\varepsilon) \setminus \{0\}} \frac{\int_{\Omega_\varepsilon \setminus \mathcal{B}_\varepsilon} |\nabla u|^2}{\int_{\Omega_\varepsilon \setminus \mathcal{B}_\varepsilon} |u|^2}.
    \end{equation}
    Notice that the right-hand side of \eqref{eq:rayleigh spherical holes} 
    is exactly the Rayleigh quotient of the eigenvalue associated with the problem with 
    \eqref{eq:spherical holes} holes, described in \cref{sub:sphericalinside}.
    In \cref{prop:narrow escape spherical inside},
    we showed that the eigenvalue of the \eqref{eq:spherical holes} tends to $0$ as~$\varepsilon \to 0$,
    therefore, 
    it is also the case for $\lambda_\varepsilon$ here considered as it is nonnegative.

\end{proof}

We now move on to building the quasi-mode $\varphi_\varepsilon$ that verifies \cref{hyp:quasi-mode} in this setting, 
using again the Green function $G$ of the Laplacian on $\Omega$ with Neumann boundary condition.
For $k \in \{1, \ldots, N\}$, 
denote 
\[
    \rho_\varepsilon^{(k)} \colon y \in \Gke \mapsto \Bigl( \left( \rek\right)^2 - d_{\partial \Omega}(y, \xk)^2 \Bigr)^{-\frac{1}{2}},
\]
and 
\begin{equation}\label{eq:quasi-mode flat holes}
    \psi_\varepsilon^{(k)}(x) = \frac{2}{\pi} \int_{\Gke} \rho_\varepsilon^{(k)}(y) G(x, y)\d \sigma(y), 
    \qquad k \in \{1, \ldots, N\}.
\end{equation}

We can now show that \cref{hyp:quasi-mode} holds for $\psi_\varepsilon^{(k)}$, 
and derive the main results from this section.
\begin{prop}\label{prop:quasi-mode spherical holes}
    The functions $\psi_\varepsilon^{(k)}$ defined in \eqref{eq:quasi-mode flat holes} verify \cref{hyp:quasi-mode} with
    \[
        \Kek = \rek, \qquad \Keo = \reo, \qquad C = \frac{4}{|\Omega|}, \qquad \mathcal{E}_1(\Kall) = \Keo|\log(\Keo)|, \qquad \mathcal{E}_2(\Kall) = \Keo.
    \]
    Furthermore, 
    the mean exit time and exit position of the reflected Brownian motion $(X_t)_{t \geq 0}$ on $\Omega_\varepsilon$ 
    killed on the holes defined in~\eqref{eq:geodesic spherical holes} are given by
    \begin{equation}\label{eq:lambda epsilon spherical holes}
        \mathbb{E}_{\nu_\varepsilon}[\tau_\varepsilon] = \frac{|\Omega|}{ 4 } \frac{1}{\reo}\Bigl(1 + \mathrm{O}\left(\mathcal{E}(\reo)\right)\Bigr),
    \end{equation}
    with $\mathcal{E} : x \mapsto x|\log(x)|$.
    The exit position is independent of the exit time, and is distributed as, for~$k \in \{ 1, \ldots, N \}$
    \begin{equation}\label{eq:exit position spherical holes}
        \mathbb{P}_{\nu_\varepsilon}(\kappa_\varepsilon = k) = \frac{\rek}{\reo} +  \mathrm{O}\left(\mathcal{E}(\reo)\right).
    \end{equation}
\end{prop}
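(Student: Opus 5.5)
We verify the four items of \cref{hyp:quasi-mode} for the functions \eqref{eq:quasi-mode flat holes} and then invoke \cref{thm:narrow escape}. Item \eqref{eq:hyp:eig} is \cref{lem:eigenvalue spherical holes}. For the remaining items we use the standard properties of the Neumann Green function $G$ from \cref{sec:app:result Green}, in the normalization
\[
-\Delta_x G(x,y)=\delta_y-\tfrac{1}{|\Omega|}, \qquad \partial_n G=0, \qquad \int_\Omega G(\cdot,y)=0,
\]
so that for $y\in\partial\Omega$
\begin{equation*}
    G(x,y)=\frac{1}{2\pi|x-y|}+\mathrm{O}\bigl(|\log|x-y||\bigr).
\end{equation*}

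\textbf{The equation.} Differentiating under the integral, $\Delta\psi_\varepsilon^{(k)}$ in $\Omega$ is a constant times the mass of the density. Since $\rho_\varepsilon^{(k)}$ is supported on the boundary, the equation holds pointwise in $\Omega=\Omega_\varepsilon$, and $\partial_n\psi_\varepsilon^{(k)}=0$ on $\Gne$ because $y\in\overline{\Gke}$ there. The constant is
\[
\Delta \psi_\varepsilon^{(k)}=\frac{2}{\pi|\Omega|}\int_{\Gke}\rho_\varepsilon^{(k)}\,\d\sigma .
\]
In geodesic polar coordinates around $\xk$ the surface element is $r(1+\mathrm{O}(r^2))\,\d r\,\d\theta$, so
\[
\int_{\Gke}\rho_\varepsilon^{(k)}=2\pi\int_0^{\rek}\frac{r\,\d r}{\sqrt{(\rek)^2-r^2}}\bigl(1+\mathrm{O}((\rek)^2)\bigr)=2\pi\rek\bigl(1+\mathrm{O}((\rek)^2)\bigr).
\]
Hence $\Delta\psi_\varepsilon^{(k)}=\frac{4}{|\Omega|}\rek(1+\mathrm{O}(\varepsilon^2))$. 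Two remarks on this constant. First, the $\mathrm{O}((\rek)^2)$ correction can be absorbed by redefining $\Kek$ as the exact value $\frac{1}{2\pi}\int_{\Gke}\rho_\varepsilon^{(k)}$; this changes the final statements only within the stated error. Second, the sign convention must match \cref{hyp:quasi-mode}, i.e.\ the sign carried by $C$ there.

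\textbf{The $L^2$ bound.} By Minkowski's integral inequality,
\[
\|\psi_\varepsilon^{(k)}\|_{L^2}\le \frac{2}{\pi}\int_{\Gke}\rho_\varepsilon^{(k)}(y)\,\|G(\cdot,y)\|_{L^2(\Omega)}\,\d\sigma(y)\lesssim \rek .
\]
This uses that $\|G(\cdot,y)\|_{L^2(\Omega)}$ is bounded uniformly in $y\in\partial\Omega$, which holds since the singularity $|x-y|^{-1}$ is square integrable in dimension three. It gives \eqref{eq:hyp:norm} with $\mathcal{E}_2=\Keo$. Continuity on $\overline{\Gde}$ follows from the fact that a single layer potential with an integrable $|x-y|^{-1}$ kernel and density in $L^p$, $p<2$, is continuous; the density $\rho_\varepsilon^{(k)}$ has only an inverse square-root edge singularity.

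\textbf{The boundary condition (main obstacle).} We need $\psi_\varepsilon^{(k)}=1+\mathrm{O}(\Keo|\log\Keo|)$ on $\Gke$ and $\mathrm{O}(\Keo)$ on $\Gamma^{(j)}_\varepsilon$ for $j\neq k$. The bound on the other holes is immediate, since $G(\cdot,y)$ is bounded away from $y$ and the total mass of the density is $\mathrm{O}(\rek)$. On $\Gke$ we split $G$ into its singular part $\frac{1}{2\pi|x-y|}$ and a remainder.
\begin{itemize}
\item The remainder is $\mathrm{O}(|\log|x-y||)$. Integrated against $\rho_\varepsilon^{(k)}$, it contributes $\mathrm{O}(\rek|\log\rek|)$; this uses $\int_{\Gke}\rho_\varepsilon^{(k)}(y)\,|\log|x-y||\,\d\sigma(y)\lesssim\rek|\log\rek|$ by scaling.
\item For the singular part, we pass to the flat disk in the tangent plane by geodesic normal coordinates. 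There $|x-y|$ and $d_{\partial\Omega}(x,y)$ agree up to a relative $\mathrm{O}(\varepsilon^2)$, and the Jacobian is $1+\mathrm{O}(\varepsilon^2)$, so the errors are $\mathrm{O}(\varepsilon^2)\times\frac{1}{|x-y|}$. Integrating gives $\mathrm{O}(\varepsilon)$.
\end{itemize}
On the flat disk we use the classical identity (Weber / Copson, electrified disk): for $|x|<r$,
\[
\int_{|y|<r}\frac{\d y}{|x-y|\sqrt{r^2-|y|^2}}=\pi^2 .
\]
Hence $\frac{2}{\pi}\cdot\frac{1}{2\pi}\cdot\pi^2=1$, as required. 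The computation appearing in \cref{sec:app:integrals} is presumably this identity. The delicate point is making the curvature perturbation uniform up to the edge of the disk, where the density blows up. I would handle it by noting that the density singularity is integrable, and by controlling the error kernel by $C\varepsilon^2/|x-y|$, which integrates against $\rho_\varepsilon^{(k)}$ to $\mathrm{O}(\varepsilon^2)$ via the same identity.

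\textbf{Conclusion.} Together these give \eqref{eq:hyp:PDE quasi-mode} with $\mathcal{E}_1=\Keo|\log\Keo|$ and $C=4/|\Omega|$. \Cref{thm:narrow escape} then yields the exit time \eqref{eq:lambda epsilon spherical holes} and the exit distribution \eqref{eq:exit position spherical holes}.
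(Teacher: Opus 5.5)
Your asymptotic estimates follow the paper's proof. You use the same split $G=\frac{1}{2\pi|x-y|}+\mathrm{O}(|\log|x-y||)$, the same electrified-disk identity (the paper's (Newtonian potential) estimate, via \cite[Equation 4.6]{Tzou2021}), and the same mass computation and off-hole bound. Your Minkowski bound on $\|\psi_\varepsilon^{(k)}\|_{L^2}$ is equivalent to the paper's Cauchy--Schwarz/Fubini argument with \cref{lem:Green L2 bound}. You also rightly note that $\int_{\Gke}\rho_\varepsilon^{(k)}$ equals $2\pi\rek$ only up to a factor $1+\mathrm{O}((\rek)^2)$, so $\Kek$ should be defined as $\frac{1}{2\pi}\int_{\Gke}\rho_\varepsilon^{(k)}$; this corrects an imprecision in the paper.

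The gap is in the regularity part of \cref{hyp:quasi-mode}, and the first problem is $H^1$ membership. You never show $\psi_\varepsilon^{(k)}\in H^1(\Omega)$, which the hypothesis explicitly requires, and your Neumann condition is only a pointwise statement on $\Gne$. The hypothesis, and the Green formula \eqref{eq:green formula} behind \cref{thm:narrow escape}, need $\psi_\varepsilon^{(k)}\in H^1$ together with the weak identity $\langle\partial_n\psi_\varepsilon^{(k)},v\rangle=0$ for all $v$ vanishing on $\Gde$. This does not follow from the pointwise statement, because $\Gne$ accumulates on $\partial\Gke$, where the density blows up and $\rho_\varepsilon^{(k)}\notin L^2(\Gke)$. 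The paper obtains it from \cref{lem:Green solution} with $p=4/3$:
\begin{itemize}
\item $\rho_\varepsilon^{(k)}\in L^{4/3}$, and $H^1(\Omega)$ traces into $L^4(\partial\Omega)$, so the boundary datum is a continuous linear form on $H^1(\Omega)$;
\item Lax--Milgram then gives an $H^1$ solution;
\item uniqueness in $W^{1,4/3}$ identifies it with the single-layer potential, whose weak Neumann trace is $\frac{2}{\pi}\rho_\varepsilon^{(k)}\mathbbm{1}_{\Gke}$.
\end{itemize}

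The second problem is that your continuity argument rests on a false general claim. A density in $L^p$ with $p<2$ does not give a continuous, or even finite, potential for the kernel $|x-y|^{-1}$ on a surface; Hölder would need $p>2$. For example, $\rho(y)=|y|^{-1}$ on the flat unit disk lies in every $L^p$ with $p<2$, yet its potential is infinite at the center. Continuity here genuinely uses the structure of $\rho_\varepsilon^{(k)}$, whose singularity is spread along the edge circle. In \cref{lem:continuity flat holes}, the paper handles the two parts separately:
\begin{itemize}
\item The Newtonian part, in geodesic coordinates, is $\mathcal{K}(z_0,z_0)$ times the flat integral, which equals $\pi^2$ for every $z_0$ by the identity you already use. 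The remaining correction has integrand bounded by $C\rho_{\mathrm{flat}}$ and is continuous by dominated convergence.
\item Only the logarithmic remainder is treated by the Hölder-type argument you had in mind, through Vitali's theorem. It works there because $\log|x-y|$ lies in every $L^{p'}$.
\end{itemize}
You already have these ingredients in your Dirichlet estimate, so the fix is to reuse them instead of the general claim.
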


\begin{proof}
    In the light of \cref{lem:eigenvalue spherical holes}, 
    we only need to show that the functions $\psi_\varepsilon^{(k)}$ verify \cref{hyp:quasi-mode} to show the results on the mean exit time and exit position.
    Fix $k \in \{ 1, \ldots, N\}$.

    \paragraph{PDE.}
    The proof relies on the results from \cref{lem:Green solution}.
    Indeed,
    by the (Integrability) estimate of \cref{lem:integrals circular},
    $\rho_\varepsilon^{(k)} \in L^{4/3}(\Gke)$, although $\rho_\varepsilon^{(k)} \notin L^2(\Gke)$.
    Therefore, by \cref{lem:Green solution} with $p = 4/3$,
    $\psi_\varepsilon^{(k)}$ belongs to $H^1(\Omega)$ and is the weak solution of the following PDE:
    \begin{equation}\label{eq:PDE quasi-mode flat holes}
        \left\{
            \begin{aligned}
                \Delta \psi_\varepsilon^{(k)} & = \frac{2}{\pi|\Omega|} \int_{\Gke} \rho_\varepsilon^{(k)} \d \sigma(y) && \text{ in } \Omega, \\
                \partial_n \psi_\varepsilon^{(k)}(x) & = \frac{2}{\pi} \rho_\varepsilon^{(k)}(x) \mathbbm{1}_{\Gke}(x) && \text{ for } x \in \partial \Omega.\\
            \end{aligned}
        \right.
    \end{equation}
    It is clear from \eqref{eq:PDE quasi-mode flat holes} that $\partial_n \psi_\varepsilon^{(k)} = 0$ on $\Gne$ as $\Gne \cap \Gde = \varnothing$.
    Moreover, 
    notice that the following integral has been computed in \cref{lem:integrals circular}, and
    \[
        \int_{\Gke} \rho_\varepsilon^{(k)} \d \sigma(y) = 2\pi \rek,
        \qquad \text{ so that } \qquad 
        \Delta \psi_\varepsilon^{(k)} = \frac{4}{|\Omega|} \rek.
    \]

    \paragraph{Continuity on $\overline{\Gde}$.}
    By \cref{lem:continuity flat holes}, $\psi_\varepsilon^{(k)}$ is continuous on $\overline{\Gde}$,
    so that $\psi_\varepsilon^{(k)} \in H^1(\Omega) \cap C^0(\overline{\Gde})$, as required by \cref{hyp:quasi-mode}.

    \paragraph{Dirichlet estimate.}
    Take $x \in \Gke$,
    then, 
    using \cref{lem:Green scaling exact} for $\rek$ small enough, 
    one can decompose $\psi_\varepsilon^{(k)}$ as
    \begin{equation}\label{eq:varphi expansion spherical holes}
        \psi_\varepsilon^{(k)}(x) = \frac{2}{\pi} \int_{\Gke} \rho_\varepsilon^{(k)} \frac{1}{2\pi |x-y|}\d \sigma(y)
        + \frac{2}{\pi} \int_{\Gke} \rho_\varepsilon^{(k)} g(x, y)\d \sigma(y),
    \end{equation}
    where $g(x, y)$ is bounded by $\mathrm{O}(|\log|x - y||)$ in the limit $|x-y| \to 0$.
    Both integrals in \eqref{eq:varphi expansion spherical holes} have been computed in \cref{lem:integrals circular}. 
    Indeed, by the (Newtonian potential) estimate from \cref{lem:integrals circular},
    \begin{equation}\label{eq:newton potential estimate spherical}
        \frac{2}{\pi} \int_{\Gke} \rho_\varepsilon^{(k)} \frac{1}{2\pi |x-y|}\d \sigma(y) 
        = \frac{2}{\pi} \frac{\pi}{2} + \mathrm{O}(\rek).
    \end{equation}
    By the (Logarithmic potential) estimate from \cref{lem:integrals circular}, 
    \begin{equation}\label{eq:log potential estimate spherical}
        \left| \int_{\Gke} \rho_\varepsilon^{(k)} g(x, y)\d \sigma(y) \right|
        \leq C \int_{\Gke} \rho_\varepsilon^{(k)} \left| \log|x-y|\right|\d \sigma(y) 
        = \mathrm{O}(\rek|\log(\rek)|).
    \end{equation}
    Combining \eqref{eq:varphi expansion spherical holes}, \eqref{eq:newton potential estimate spherical} and \eqref{eq:log potential estimate spherical}
    leads to 
    \[
        \psi_\varepsilon^{(k)}(x) = 1 + \mathrm{O}(\rek) + \mathrm{O}(\rek|\log(\rek)|) = 1 + \mathrm{O}(\rek |\log(\rek)|), \qquad \text{for all } x \in \Gke.
    \]
    Now take $x \in \Gamma_{j}^\varepsilon$ for $j \neq k$.
    As the holes are well separated,
    $G(x, \cdot)$ is bounded on $\Gke$ uniformly in $x$ and~$\varepsilon$,
    so that, by the (mass) integral from \cref{lem:integrals circular}, 
    $\psi_\varepsilon^{(k)}(x) = \mathrm{O}(\rek)$.
    Since $x \mapsto x |\log(x)|$ is increasing on $(0, \e^{-1})$ and $\rek \leq \reo$, 
    this shows that 
    \[
        \bigl\|\psi_\varepsilon^{(k)} - \mathbbm{1}_{\overline{\Gke}}\bigr\|_{L^\infty(\Gde)} = \mathrm{O}(\reo |\log(\reo)|),
    \]
    which is the Dirichlet condition in \eqref{eq:hyp:PDE quasi-mode} with $\mathcal{E}_1(\Kall) = \reo|\log(\reo)|$.

    \paragraph{Normalization.}
    Using the Cauchy--Schwarz inequality between $\sqrt{\rho_\varepsilon^{(k)}}$ and $\sqrt{\rho_\varepsilon^{(k)}} G(x, \cdot)$,
    one has that, for~$x \in \Omega$, 
    \begin{equation}\label{eq:cauchy-schwarz}
        \Bigl( \int_{\Gke} \rho_\varepsilon^{(k)}(y) G(x,y) \, \d \sigma(y) \Bigr)^2 \leq \Bigl( \int_{\Gke} \rho_\varepsilon^{(k)}(y) \, \d \sigma(y) \Bigr) \Bigl( \int_{\Gke} \rho_\varepsilon^{(k)}(y) \, G(x,y)^2 \, \d \sigma(y) \Bigr).
    \end{equation}
    It is clear by (mass) from \cref{lem:integrals circular} that the first integral 
    in the right-hand side is $\mathrm{O}(\rek)$ in the limit $\varepsilon \to 0$.
    Integrating over $x \in \Omega$ and using Fubini--Tonelli, 
    \begin{equation}\label{eq:fubini}
        \int_{\Omega} \int_{\Gke} \rho_\varepsilon^{(k)}(y) \, G(x,y)^2 \, \d \sigma(y) \, \d x 
        = \int_{\Gke} \rho_\varepsilon^{(k)}(y) \Bigl( \int_\Omega G(x,y)^2 \, \d x \Bigr) \d \sigma(y).
    \end{equation}
    Using \eqref{eq:cauchy-schwarz}, \eqref{eq:fubini}, \cref{lem:Green L2 bound} and (mass) from \cref{lem:integrals circular}, 
    one has that there exists a constant $C > 0$ such that 
    \begin{equation}\label{eq:single hole L2 bound}
        \bigl\| \psi_\varepsilon^{(k)} \bigr\|_{L^2(\Omega)}^2
        = \frac{4}{\pi^2} \int_\Omega \Bigl( \int_{\Gke} \rho_\varepsilon^{(k)}(y) G(x,y) \, \d\sigma(y) \Bigr)^2 \d x
        \leq C \left( \rek\right)^2,
    \end{equation}
    so that $\|\psi_\varepsilon^{(k)}\|_{L^2(\Omega)} = \mathrm{O}(\reo)$.
    This shows that \eqref{eq:hyp:norm} is satisfied with $\mathcal{E}_2(\Kall) = \reo$.
\end{proof}

\section{Slit-like holes}
\label{sec:slit}
In this section, 
we consider the case of slit-like holes defined in \eqref{eq:slit def},
whose half-width $w_\varepsilon^{(k)}$ is the natural small parameter of this section.

In particular, the foci of these ellipsoids are located at the tips $s_x + t_\xk = \pm \ell_k$ of the slit,
which makes the computations explicit (see \cref{lem:computation slit}).
Finally, notice that the underlying domain $\Omega_\varepsilon$ is then modified as
\[
    \Omega_\varepsilon = \Omega \setminus \bigcup_{k=1}^{\ndoors}
    \left\{ x \in \overline{\Omega} \, \left| \, \frac{(s_x + t_\xk)^2}{(a_\varepsilon^{(k)})^2} + \frac{|\rho_\xk|^2}{(w_\varepsilon^{(k)})^2} \leq 1\right.\right\}.
\]

We are now in a position to introduce a quasi-mode $\varphi_\varepsilon$ that verifies \cref{hyp:quasi-mode} in this setting.
For $k \in \{1, \ldots, N\}$, 
denote
\begin{equation}\label{eq:quasi-mode-slit}
    \psi_\varepsilon^{(k)}(x) = \frac{\pi}{|\log w_\varepsilon^{(k)}|} \int_{\Hk} G(x, y) \, \d y, \qquad x \in \Omega,
\end{equation}
where $G$ is, as in \cref{sec:several narrow escape problems}, the Green function of the Laplacian on $\Omega$ 
with Neumann boundary condition of \cref{def:Neumann Green}.
The construction of $\psi_\varepsilon^{(k)}$ is very similar to the one used in \cref{sub:spherical}:
a single-layer potential built from $G$.
The main difference is that the layer is here supported on the curves $\Hk$
(with their arc-length measure $\d y$),
instead of the two-dimensional patches $\Gke$ of \cref{sub:spherical}.
This drastically changes the behavior of the singularity of the quasi-mode close to the holes.

Let us first check that the functions $\psi_\varepsilon^{(k)}$ verify the \eqref{eq:hyp:PDE quasi-mode} and \eqref{eq:hyp:norm} conditions of \cref{hyp:quasi-mode}.
\begin{lem}\label{lem:edp quasi-mode slit}
    The functions $\psi_\varepsilon^{(k)}$ defined in \eqref{eq:quasi-mode-slit} verify \eqref{eq:hyp:PDE quasi-mode} and \eqref{eq:hyp:norm} of \cref{hyp:quasi-mode} with
    \[
        \Kek = \frac{\ell_k}{|\log w_\varepsilon^{(k)}|}, \qquad C = \frac{2\pi}{|\Omega|}, \qquad \mathcal{E}_1(\Kall) = \mathcal{E}_2(\Kall) = \Keo.
    \]
\end{lem}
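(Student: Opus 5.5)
We follow the same scheme as in the proof of \cref{prop:quasi-mode spherical holes}, but with the single layer now supported on the curve $\Hk \subset \partial\Omega$ with constant density $\pi/|\log w_\varepsilon^{(k)}|$ with respect to arc length. The items to check are: $H^1$ regularity, the PDE with constant Laplacian, the Neumann condition on $\Gne$, the Dirichlet estimate on $\Gde$, and \eqref{eq:hyp:norm}.

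\textbf{PDE.} First we check that $\psi_\varepsilon^{(k)}\in H^1(\Omega_\varepsilon)$. Near $\partial\Omega$ the Neumann Green function behaves like $1/(2\pi|x-y|)$ (\cref{lem:Green scaling exact}). The line potential $\int_{\Hk}|x-y|^{-1}\,\d y$ therefore has only a logarithmic singularity $\sim 2|\log \mathrm{dist}(x,\Hk)|$, and its gradient behaves like $\mathrm{dist}(x,\Hk)^{-1}$. Since $\Hk$ has codimension $2$, $\int \mathrm{dist}^{-2}$ diverges logarithmically, so $\psi_\varepsilon^{(k)}\notin H^1(\Omega)$. However, $\Omega_\varepsilon$ excludes the ellipsoidal neighbourhood of $\Hk$, so $\psi_\varepsilon^{(k)}$ is smooth and in $H^1(\Omega_\varepsilon)$. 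The properties of $G$ from \cref{sec:app:result Green} (\cref{def:Neumann Green}: $\Delta_x G=|\Omega|^{-1}$ away from the pole, $\partial_n G=0$ on $\partial\Omega$ away from the pole) then give, pointwise on $\Omega_\varepsilon$,
\[
\Delta\psi_\varepsilon^{(k)}=\frac{\pi}{|\log w_\varepsilon^{(k)}|}\,\frac{2\ell_k}{|\Omega|}=\frac{2\pi}{|\Omega|}\Kek ,
\]
together with $\partial_n\psi_\varepsilon^{(k)}=0$ on $\Gne\subset\partial\Omega\setminus\Hk$. Differentiation under the integral is justified because $\mathrm{dist}(\Gne,\Hk)$ is positive for each fixed $\varepsilon$.

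\textbf{Dirichlet estimate (main obstacle).} For $x\in\Gke$ we split $G=\frac{1}{2\pi|x-y|}+g$ with $|g|=\mathrm{O}(|\log|x-y||)$.
\begin{itemize}
\item The regular part contributes $\mathrm{O}(1/|\log w_\varepsilon^{(k)}|)$, since $\int_{\Hk}|\log|x-y||\,\d y$ is bounded uniformly.
\item For the singular part, we first straighten $\Hk$ using the Frenet decomposition \eqref{eq:decomposition X}. The curvature corrections to $|x-y|$ are relative errors of order $\mathrm{O}(|s-s_x|)$, which integrate to $\mathrm{O}(1)$.
\item For a straight segment of half-length $\ell_k$ we use the explicit formula (this is \cref{lem:computation slit}): $\int_{-\ell}^{\ell}|x-\gamma(s)|^{-1}\,\d s=\log\frac{r_++r_-+2\ell}{r_++r_--2\ell}$, where $r_\pm$ are the distances to the tips. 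On the confocal ellipsoid $\Gke$ with foci at the tips, $r_++r_-=2a_\varepsilon^{(k)}$, so the potential is constant and equal to $\log\frac{a+\ell}{a-\ell}=2\log\frac{a+\ell}{w}$ by \eqref{eq:slit major axis}, that is $2|\log w_\varepsilon^{(k)}|+\mathrm{O}(1)$.
\end{itemize}
Multiplying by $\frac{\pi}{|\log w|}\cdot\frac{1}{2\pi}$ yields $\psi_\varepsilon^{(k)}=1+\mathrm{O}(1/|\log w_\varepsilon^{(k)}|)$ on $\Gke$. The delicate point is controlling the curvature/straightening error uniformly, including near the tips, where the Frenet projection degenerates. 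For this I would bound $\bigl||x-y|^{-1}-|\tilde x-\tilde y|^{-1}\bigr|\le C$ in flattened coordinates; this holds since the discrepancy is second order.

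For $x\in\Gamma_j^\varepsilon$ with $j\ne k$, the curves are separated, so $G(x,\cdot)$ is bounded on $\Hk$ and $\psi_\varepsilon^{(k)}(x)=\mathrm{O}(\Kek)$. Since $\ell_k$ is fixed, $1/|\log w_\varepsilon^{(k)}|\lesssim\Kek\le\Keo$, and hence $\mathcal{E}_1=\Keo$.

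\textbf{Normalization.} As in \eqref{eq:cauchy-schwarz}--\eqref{eq:single hole L2 bound}, we apply Cauchy--Schwarz with respect to the arc-length measure on $\Hk$, then Fubini, then \cref{lem:Green L2 bound} (which gives $\sup_y\|G(\cdot,y)\|_{L^2(\Omega)}<\infty$). This gives $\|\psi_\varepsilon^{(k)}\|_{L^2(\Omega_\varepsilon)}^2\le \frac{\pi^2}{|\log w|^2}\,2\ell_k\int_{\Hk}\|G(\cdot,y)\|_{L^2}^2\,\d y=\mathrm{O}(\Kek^2)$, so $\mathcal{E}_2=\Keo$.
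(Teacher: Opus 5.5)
Your proposal is correct and follows essentially the same route as the paper. It uses the same weak-PDE argument based on the positive distance from $\overline{\Omega_\varepsilon}$ to $\Hk$, the same split $G=\frac{1}{2\pi|x-y|}+\text{remainder}$ with the exact confocal-ellipsoid evaluation of the straight-segment potential plus a bounded straightening error (\cref{lem:computation slit}), and the same Cauchy--Schwarz, Fubini and \cref{lem:Green L2 bound} normalization; the one step you only assert, the uniform straightening bound near the tips, is closed in the paper by the foot-point alignment $b(s)\ge|t-\sigma|\ge|\sigma|$ (with $\sigma=s-s_x$), which is exactly what turns the second-order discrepancy $\bigl||x-\gamma(s)|-b(s)\bigr|\le K_1\sigma^2$ into a bounded integrand.
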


\begin{proof}
    Fix $k \in \{1, \ldots, N\}$.

    \paragraph{PDE.}
    Fix $\varepsilon > 0$.
    By construction of $\Omega_\varepsilon$,
    the closure $\overline{\Omega_\varepsilon}$ stays at a positive distance from the slit:
    in the coordinates \eqref{eq:decomposition X}, every point of the half-spheroid $\Gke$
    is at a positive distance from $\Hk$,
    bounded below by some constant depending only on $w_\varepsilon^{(k)}$ and the geometry of $\Hk$, so that
    \[
        \delta_\varepsilon = \mathrm{dist}\bigl(\overline{\Omega_\varepsilon}, \Hk\bigr) > 0 .
    \]
    By \cref{lem:Green David} (extended by continuity to $\overline{\Omega_\varepsilon}$),
    $|G(x, y)| \leq K \delta_\varepsilon^{-1}$ and $|\nabla_x G(x, y)| \leq K \delta_\varepsilon^{-2}$
    for all $x \in \overline{\Omega_\varepsilon}$ and $y \in \Hk$.
    Hence, by dominated convergence, $\psi_\varepsilon^{(k)}$ is continuous on $\overline{\Omega_\varepsilon}$,
    and, differentiating under the integral sign in \eqref{eq:quasi-mode-slit}, $\psi_\varepsilon^{(k)} \in W^{1, \infty}(\Omega_\varepsilon) \subset H^1(\Omega_\varepsilon)$.
    Moreover, for each $y \in \Hk$, $G(\cdot, y)$ is smooth on $\overline{\Omega_\varepsilon}$,
    with $\Delta_x G(\cdot, y) = 1/|\Omega|$ in $\Omega_\varepsilon$ and $\partial_n G(\cdot, y) = 0$ on $\Gne$ by \cref{def:Neumann Green}.
    Integrating these relations in $y \in \Hk$ against test functions, with the Fubini theorem justified by the above bounds,
    shows that $\psi_\varepsilon^{(k)} \in H^1(\Omega_\varepsilon) \cap C^0(\overline{\Gde})$ satisfies, in the weak sense,
    \[
        \Delta \psi_\varepsilon^{(k)}
        = \frac{\pi}{|\log w_\varepsilon^{(k)}|} \int_{\Hk} \frac{1}{|\Omega|} \, \d y
        = \frac{\pi}{|\log w_\varepsilon^{(k)}|} \, \frac{2 \ell_k}{|\Omega|}
        = C \Kek
        \quad \text{in } \Omega_\varepsilon,
        \qquad
        \partial_n \psi_\varepsilon^{(k)} = 0
        \quad \text{on } \Gne,
    \]
    where we used $\int_{\Hk} \d y = 2 \ell_k$, and $\Gne \subset \partial \Omega \setminus \Hk$ for the Neumann condition.

    \paragraph{Dirichlet estimate.}
    The proof of this estimate is postponed to \cref{lem:computation slit} in the appendix below, 
    where we show that
    \begin{equation}\label{eq:slit integral}
        \int_{\mathcal{H}_k} G(x,y) \, \d y = \frac{1}{\pi} |\log w_\varepsilon^{(k)}| + \mathrm{O}(1), \qquad x \in \Gke,
    \end{equation}
    so that, by the very choice of the prefactor $\pi / |\log w_\varepsilon^{(k)}|$ in \eqref{eq:quasi-mode-slit},
    \begin{equation}\label{eq:slit dirichlet 1}
        \psi_\varepsilon^{(k)}(x) = 1 + \mathrm{O}\left( \frac{1}{|\log w_\varepsilon^{(k)}|}\right), \qquad x \in \Gke.
    \end{equation}
    As the curves $\mathcal{H}_j$ are pairwise disjoint, hence at a positive distance from one another that does not vanish as $\varepsilon \to 0$,
    for $j \neq k$ the integral in \eqref{eq:slit integral} is bounded uniformly for $x \in \Gamma_j^\varepsilon$.
    Therefore,
    \begin{equation}\label{eq:slit dirichlet 2}
        \psi_\varepsilon^{(k)}(x) = \mathrm{O}\left( \frac{1}{|\log w_\varepsilon^{(k)}|}\right), \qquad x \in \Gde \setminus \Gke.
    \end{equation}
    The Dirichlet estimate follows from \eqref{eq:slit dirichlet 1} and \eqref{eq:slit dirichlet 2}.

    \paragraph{Normalization.} 
    Notice from \cref{lem:Green L2 bound} that 
    \[
        \sup_{y \in \partial \Omega} \int_\Omega G(x,y)^2 \, \d x \leq M < +\infty,
    \]
    where $M$ is a constant independent of $\varepsilon$.
    Thus, 
    the $L^2$ norm of $\psi_\varepsilon^{(k)}$ satisfies
    \begin{equation}\label{eq:psi L2 norm slit}
        \|\psi_\varepsilon^{(k)}\|_{L^2(\Omega_\varepsilon)} \leq \frac{\pi}{|\log w_\varepsilon^{(k)}|} 
        \left( \int_{\Omega_\varepsilon} \left( \int_{\Hk} G(x,y) \, \d y \right)^2 \, \d x \right)^{\frac{1}{2}}.
    \end{equation}
    Using the Cauchy--Schwarz inequality, the inclusion $\Omega_\varepsilon \subset \Omega$, and Fubini--Tonelli, we can bound the integral in \eqref{eq:psi L2 norm slit} 
    by constants independent of $\varepsilon$.
    Hence,
    \[
        \|\psi_\varepsilon^{(k)}\|_{L^2(\Omega_\varepsilon)} = \mathrm{O}\left( \frac{1}{|\log w_\varepsilon^{(k)}|} \right) = \mathrm{O}(\Keo),
    \]
    which is \eqref{eq:hyp:norm} with~$\mathcal{E}_2(\Kall) = \Keo$.
    By the Cauchy--Schwarz inequality applied to $y \mapsto 1$ and $y \mapsto G(x, y)$ on $\Hk$,
    for $x \in \Omega_\varepsilon$,
    \[
        \left( \int_{\Hk} G(x,y) \, \d y \right)^2 \leq |\Hk| \int_{\Hk} G(x,y)^2 \, \d y = 2 \ell_k \int_{\Hk} G(x,y)^2 \, \d y.
    \]
    Integrating over $x \in \Omega_\varepsilon \subset \Omega$,
    using Fubini--Tonelli and \cref{lem:Green L2 bound},
    \[
        \int_{\Omega_\varepsilon} \left( \int_{\Hk} G(x,y) \, \d y \right)^2 \, \d x \leq 2 \ell_k \int_{\Hk} \Bigl( \int_\Omega G(x,y)^2 \, \d x \Bigr) \d y \leq 2 \ell_k \int_{\Hk} M \, \d y = 4 M \ell_k^2,
    \]
    so that $\|u_k\|_{L^2(\Omega)} = \mathrm{O}(1)$,
    a constant independent of $\varepsilon$ -- as could be expected, since $u_k$ itself does not depend on $\varepsilon$, only the prefactor in front of it does.
    Hence,
    \[
        \|\psi_\varepsilon^{(k)}\|_{L^2(\Omega_\varepsilon)} \leq \frac{\pi}{|\log w_\varepsilon^{(k)}|} \|u_k\|_{L^2(\Omega)} = \mathrm{O}\left( \frac{1}{|\log w_\varepsilon^{(k)}|} \right) = \mathrm{O}(\Keo),
    \]
    which is \eqref{eq:hyp:norm} with~$\mathcal{E}_2(\Kall) = \Keo$.
\end{proof}

It remains to check that the eigenvalue $\lambda_\varepsilon$ tends to $0$, and we can then conclude with the main result of this section.

\begin{prop}\label{prop:narrow escape slit}
    Consider the reflected Brownian motion $(X_t)_{t \geq 0}$ on $\Omega_\varepsilon$ killed on the holes defined in~\eqref{eq:slit def},
    then, in the limit as $\varepsilon \to 0$, the mean exit time is given by
    \begin{equation}\label{eq:lambda epsilon slit}
        \mathbb{E}_{\nu_\varepsilon}[\tau_\varepsilon] = \frac{|\Omega|}{2\pi} \frac{1}{\Keo} \Bigl( 1 + \mathrm{O}(\Keo) \Bigr).
    \end{equation}
    Then the exit position is independent of the exit time, and is distributed as, for $k \in \{ 1, \ldots, N\}$,
    \begin{equation}\label{eq:exit position slit}
        \mathbb{P}_{\nu_\varepsilon}(\kappa_\varepsilon = k) = \frac{\Kek}{\Keo} + \mathrm{O}(\Keo).
    \end{equation}
\end{prop}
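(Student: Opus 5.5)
The plan is to apply \cref{thm:narrow escape}. \Cref{lem:edp quasi-mode slit} already supplies \eqref{eq:hyp:PDE quasi-mode} and \eqref{eq:hyp:norm} with $\Kek = \ell_k/|\log w_\varepsilon^{(k)}|$, $C = 2\pi/|\Omega|$ and $\mathcal{E}_1 = \mathcal{E}_2 = \Keo$. The continuity of $\psi_\varepsilon^{(k)}$ on $\overline{\Gde}$ was established in that proof, using the positive distance $\delta_\varepsilon$ between $\overline{\Omega_\varepsilon}$ and $\Hk$. The only missing ingredient is therefore \eqref{eq:hyp:eig}, namely $\lambda_\varepsilon \to 0$. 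Once this is shown, \cref{thm:narrow escape} gives $\mathbb{E}_{\nu_\varepsilon}[\tau_\varepsilon] = \lambda_\varepsilon^{-1} = \frac{|\Omega|}{2\pi \Keo}(1+\mathrm{O}(\Keo))$ and $\mathbb{P}_{\nu_\varepsilon}(\kappa_\varepsilon = k) = \Kek/\Keo + \mathrm{O}(\Keo)$. These are exactly \eqref{eq:lambda epsilon slit} and \eqref{eq:exit position slit}.

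To show $\lambda_\varepsilon \to 0$, I would use the min-max principle as in \cref{lem:eigenvalue spherical holes}, and take the quasi-mode itself as a test function: $u = \varphi_\varepsilon^+$, or a truncation of it. A more robust option is to compare with the case of spherical holes. Each spheroid $\Gke$ lies in a ball $B(x_k, R)$ of fixed radius, so it does not shrink to a point, and a direct comparison with \eqref{eq:spherical holes} does not apply. Instead I would use a capacity-type test function. Set $u_\varepsilon = \max\bigl(0, \min\bigl(1, \log(d(x)/w)/\log(w^{-1/2})\bigr)\bigr)$, where $d(x)$ is the distance to $\bigcup_k \Hk$ and $w = \max_k w_\varepsilon^{(k)}$. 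Then $u_\varepsilon$ vanishes within distance $w$ of the slits, and therefore on the closed spheroids, after adjusting constants using that the spheroid lies within distance $\mathrm{O}(w)$ of $\Hk$. It equals $1$ outside the $\sqrt{w}$-tubular neighborhood. Its gradient has size $(d |\log w|)^{-1}$ on a tube around a curve of fixed length, which gives
\[
\int |\nabla u_\varepsilon|^2 \lesssim \frac{1}{|\log w|^2}\int_{w}^{\sqrt w} \frac{r\,\d r}{r^2} = \mathrm{O}\Bigl(\frac{1}{|\log w|}\Bigr),
\]
while $\int u_\varepsilon^2 \to |\Omega|$. Hence $\lambda_\varepsilon = \mathrm{O}(|\log w|^{-1}) \to 0$.

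The main obstacle is the geometric bookkeeping near the slit tips. The Frenet coordinates \eqref{eq:decomposition X} allow $\pi_k(x)$ to be an endpoint, so the spheroid near the foci must be checked to lie within $\mathrm{O}(w)$ of $\Hk$. This follows from $a_\varepsilon^{(k)} - \ell_k = \mathrm{O}((w_\varepsilon^{(k)})^2)$ by \eqref{eq:slit major axis}. The tubular-neighborhood volume estimate near the endpoints, where the neighborhood is a half-ball cap, only adds an $\mathrm{O}(1)$ factor. A second point to verify is that $u_\varepsilon \in H^1_{\Gde}(\Omega_\varepsilon)$, which holds since $u_\varepsilon$ is Lipschitz and vanishes near $\Gde$.
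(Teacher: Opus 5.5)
Your proposal is correct, but you reach \eqref{eq:hyp:eig} with a different test function than the paper. The rest of your reduction to \cref{thm:narrow escape} via \cref{lem:edp quasi-mode slit} is the same as the paper's.

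The paper stays with the quasi-mode. It uses $\bigl(1 - 2\sum_k \psi_\varepsilon^{(k)}\bigr)_+$, and the factor $2$ matters: $\varphi_\varepsilon^+$ alone need not vanish on $\Gde$, since $\varphi_\varepsilon$ is only $\mathrm{O}(\Keo)$ there. Admissibility therefore rests on the Dirichlet estimate of \cref{lem:edp quasi-mode slit}. The energy is then bounded through three ingredients:
\begin{itemize}
\item the gradient bound $|\nabla_x G(x,y)| \leq K|x-y|^{-2}$ of \cref{lem:Green David};
\item the estimate $\int_{\Hk}|x-y|^{-2}\,\mathrm{d}y \leq C\,d(x,\Hk)^{-1}$;
\item the fact that $\Omega_\varepsilon$ stays at distance at least $c\,(w_\varepsilon^{(k)})^2$ from $\Hk$ near the tips.
\end{itemize}
In that route the logarithm comes from $\int_{c w^2}^{r_0}\mathrm{d}t/t$.

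Your codimension-two logarithmic cutoff needs no information on $G$ or on the quasi-mode. It only uses that the spheroids lie in $\{d \leq w\}$ and that $|\{d<t\}| \leq Ct^2$. The logarithm then comes from $\int_w^{\sqrt w}\mathrm{d}r/r$, which shows directly that $|\log w|$ is a capacity effect.

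You lose nothing in the rate. Your bound $\lambda_\varepsilon = \mathrm{O}\bigl(|\log \max_k w_\varepsilon^{(k)}|^{-1}\bigr)$ is also $\mathrm{O}(\Keo)$, because $\Keo \geq \min_k \ell_k / |\log \max_k w_\varepsilon^{(k)}|$. The paper's route, by contrast, recycles objects it has already built.

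Your ``adjusting constants'' step is not needed. On $\Gke$ there are two cases:
\begin{itemize}
\item If the foot point is interior, then $t_x = 0$ and $d(x,\Hk) = |\rho_x| \leq w_\varepsilon^{(k)}$.
\item If the foot point is a tip, then $|t_x| \leq a_\varepsilon^{(k)}-\ell_k$ and $|\rho_x|^2 \leq (w_\varepsilon^{(k)})^4/(a_\varepsilon^{(k)})^2$. Hence $d(x,\Hk) = \mathrm{O}\bigl((w_\varepsilon^{(k)})^2\bigr)$.
\end{itemize}
So your $u_\varepsilon$, exactly as defined with $w=\max_k w_\varepsilon^{(k)}$, already vanishes on $\Gde$.
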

\begin{proof}
    In view of \cref{lem:edp quasi-mode slit}, it remains to prove that $\lambda_\varepsilon \to 0$ as $\varepsilon \to 0$ to be able to apply \cref{thm:narrow escape}.
    As in \cref{lem:eigenvalue spherical holes},
    we rely on the Rayleigh quotient
    \begin{equation}\label{eq:rayleigh quotient slit}
        \lambda_\varepsilon = \min_{u \in H^1_{\Gde}(\Omega_\varepsilon) \setminus \{0\}} \frac{\int_{\Omega_\varepsilon} |\nabla u|^2}{\int_{\Omega_\varepsilon} |u|^2}.
    \end{equation}
    As $\varphi_\varepsilon$, 
    the quasi-mode defined in \eqref{eq:def quasi-mode} using the $(\psi_\varepsilon^{(k)})_{k=1}^N$ of \eqref{eq:quasi-mode-slit},
    belongs to $H^1(\Omega_\varepsilon)$ but does not vanish on $\Gde$,
    we cannot use it directly as a trial function;
    we replicate here the techniques used in \cite[Proof of item (ii) of Theorem 1.2]{Tony2024},
    and consider, for $\alpha > 0$,
    \[
        \varphi_\varepsilon^\alpha(x) = 1 - \alpha \sum_{k=1}^N \psi_\varepsilon^{(k)}(x).
    \]
    By the Dirichlet estimate of \cref{lem:edp quasi-mode slit},
    $\sum_{k=1}^N \psi_\varepsilon^{(k)} = 1 + \mathrm{O}(\Keo)$ on $\Gde$, so
    \[
        \varphi_\varepsilon^\alpha = (1-\alpha) + \alpha \, \mathrm{O}(\Keo), \qquad \text{ uniformly on } \Gde.
    \]
    Hence, for a fixed $\alpha^* > 1$ (e.g. $\alpha^*=2$), and $\varepsilon$ small enough, $\varphi_\varepsilon^{\alpha^*}$ is (strictly) negative on $\Gde$.
    Denote~$\phi_\varepsilon^{\alpha^*} = \bigl(\varphi_\varepsilon^{\alpha^*}\bigr)_+$ the positive part of $\varphi_\varepsilon^{\alpha^*}$:
    as it vanishes on $\Gde$, it belongs to the form domain $H^1_{\Gde}(\Omega_\varepsilon)$.

    By the same computation as in the normalization part of the proof of \cref{lem:edp quasi-mode slit} (with the extra factor $\alpha^*$),
    \begin{equation}\label{eq:alpha qm slit l2}
        \|\phi_\varepsilon^{\alpha^*}\|_{L^2(\Omega_\varepsilon)}^2 = |\Omega_\varepsilon| + \mathrm{O}(\Keo).
    \end{equation}
    For the gradient, using $\nabla \varphi_\varepsilon^{\alpha^*} = -\alpha^* \sum_{k=1}^N \nabla \psi_\varepsilon^{(k)}$,
    the discrete Cauchy--Schwarz inequality
    and \cref{lem:Green David} to bound $|\nabla_x G(x,y)| \leq K |x-y|^{-2}$,
    \[
        \int_{\Omega_\varepsilon} |\nabla \phi_\varepsilon^{\alpha^*}|^2
        \leq \int_{\Omega_\varepsilon} |\nabla \varphi_\varepsilon^{\alpha^*}|^2
        \leq N \bigl(\alpha^*\bigr)^2 \sum_{k=1}^N \frac{\pi^2}{|\log w_\varepsilon^{(k)}|^2} \int_{\Omega_\varepsilon} \Bigl( \int_{\Hk} K |x - y|^{-2} \, \d y \Bigr)^2 \, \d x.
    \]
    Fix $k$.
    Since $\Hk$ is a smooth embedded arc, $\int_{\Hk} |x-y|^{-2} \, \d y \leq C \, d(x, \Hk)^{-1}$.
    The points of the spheroid closest to $\Hk$ are its tips,
    at distance $a_\varepsilon^{(k)} - \ell_k = (w_\varepsilon^{(k)})^2 / (a_\varepsilon^{(k)} + \ell_k)$ by \eqref{eq:slit major axis},
    so that $d(x, \Hk) \geq c \, (w_\varepsilon^{(k)})^2$ for $x \in \Omega_\varepsilon$.
    As $|\{ x \in \real^3 \, | \, d(x, \Hk) < t \}| \leq C t^2$, it holds that
    \[
        \int_{\Omega_\varepsilon} \Bigl( \int_{\Hk} K |x-y|^{-2} \, \d y \Bigr)^2 \, \d x
        \leq C \int_{\Omega_\varepsilon} d(x, \Hk)^{-2} \, \d x
        \leq C \int_{c \, (w_\varepsilon^{(k)})^2}^{r_0} \frac{\d t}{t} + C
        = \mathrm{O}\bigl( |\log w_\varepsilon^{(k)}| \bigr),
    \]
    for some fixed $r_0 > 0$.
    Plugging this bound into the previous display,
    \begin{equation}\label{eq:nabla slit}
        \int_{\Omega_\varepsilon} |\nabla \phi_\varepsilon^{\alpha^*}|^2
        = \mathrm{O}\left( \sum_{k=1}^N \frac{|\log w_\varepsilon^{(k)}|}{|\log w_\varepsilon^{(k)}|^2} \right)
        = \mathrm{O}(\Keo).
    \end{equation}
    As $\phi_\varepsilon^{\alpha^*}$ belongs to the form domain $H^1_{\Gde}(\Omega_\varepsilon)$,
    plugging \eqref{eq:alpha qm slit l2} and \eqref{eq:nabla slit} into \eqref{eq:rayleigh quotient slit} shows that
    \[
        \lambda_\varepsilon \leq \frac{\mathrm{O}(\Keo)}{1+\mathrm{O}(\Keo)} = \mathrm{O}(\Keo) \underset{\varepsilon \to 0}{\longrightarrow} 0.
    \]
    Having checked both hypotheses \eqref{eq:hyp:eig} and \eqref{eq:hyp:PDE quasi-mode}--\eqref{eq:hyp:norm} of \cref{hyp:quasi-mode},
    \cref{thm:narrow escape} together with \cref{lem:edp quasi-mode slit} directly give \eqref{eq:lambda epsilon slit} and \eqref{eq:exit position slit}.
\end{proof}

\section{Tubular holes}
\label{sec:tubular holes}
In this section we consider the narrow escape problem for the hole shape
defined in \eqref{eq:tube def}.
Unlike in the previous sections,
the functions $\psi_\varepsilon^{(k)}$ are not given by an explicit formula:
$\psi_\varepsilon^{(k)}$ is defined in \cref{sub:definition of the quasi-mode}
as the solution of a Neumann problem where a flux $\Phi_\varepsilon^{(k)}$ leaves $\Omega_\varepsilon$ uniformly through the $k$-th hole $\Gke$ only.
We make this construction rigorous in \cref{prop:flux expansion}
by computing the energy matrix of the family $(\psi_\varepsilon^{(k)})_{k=1}^N$,
whose off-diagonal entries measure the interaction between the tubes through the bulk.
In \cref{sub:tube:dirichlet error},
we read the means of $\psi_\varepsilon^{(k)}$ on the holes from this matrix
and control the fluctuations around these means (\cref{prop:dirichlet tube}).
We conclude with \cref{prop:quasi-mode tube} in \cref{sub:tube:proof of thm}.

\subsection{Definition of the quasi-mode}
\label{sub:definition of the quasi-mode}
Throughout this section,
with a slight abuse of notation,
we still denote $\mathfrak{q}_\varepsilon(u, v) = \int_{\Omega_\varepsilon} \nabla u \cdot \nabla v$ as in \eqref{eq:def bilinear form},
now for $u, v \in H^1(\Omega_\varepsilon)$.

For $k \in \{1, \ldots, N\}$ and $\Phi \in \real$,
consider the Neumann problem where a flux $\Phi$ leaves $\Omega_\varepsilon$ uniformly through the $k$-th hole only:
\begin{equation}\label{eq:tube neumann problem}
    \left\{
        \begin{aligned}
            \Delta \psi^{(k)}_\Phi & = \frac{\Phi}{|\Omega_\varepsilon|} && \text{in } \Omega_\varepsilon, \\
            \partial_n \psi^{(k)}_\Phi & = 0 && \text{on } \partial \Omega_\varepsilon \setminus \Gke, \\
            \partial_n \psi^{(k)}_\Phi & = \frac{\Phi}{|\Gke|} && \text{on } \Gke .
        \end{aligned}
    \right.
\end{equation}
A weak solution of \eqref{eq:tube neumann problem} is a function $\psi^{(k)}_\Phi \in H^1(\Omega_\varepsilon)$ such that
\begin{equation}\label{eq:weak tube neumann problem}
    \forall v \in H^1(\Omega_\varepsilon), \qquad
    \mathfrak{q}_\varepsilon(\psi^{(k)}_\Phi, v)
    = \Phi \, \mathfrak{l}_k(v)
    = - \frac{\Phi}{|\Omega_\varepsilon|} \int_{\Omega_\varepsilon} v
        + \frac{\Phi}{|\Gke|} \int_{\Gke} v ,
\end{equation}
where $\mathfrak{l}_k$ is the continuous linear form on $H^1(\Omega_\varepsilon)$ defined by the right-hand side of \eqref{eq:weak tube neumann problem}.
As the compatibility condition is verified in \eqref{eq:tube neumann problem}, 
it has a unique zero-mean weak solution (see~\cite[Theorem~8.8 and Remark~8.11]{Salsa}).
By linearity, $\psi^{(k)}_\Phi = \Phi \, \psi^{(k)}_1$.

Take $\Phi = 1$ and $v = \psi^{(k)}_1$ in \eqref{eq:weak tube neumann problem},
as $\psi^{(k)}_1$ has zero mean on $\Omega_\varepsilon$,
it leads to
\[
    \mathfrak{q}_\varepsilon(\psi^{(k)}_1, \psi^{(k)}_1)
    = \frac{1}{|\Gke|} \int_{\Gke} \psi^{(k)}_1,
\]
hence its mean on $\Gke$ is related to its energy.
In view of the Dirichlet condition in the hypothesis
\eqref{eq:hyp:PDE quasi-mode},
it is natural to impose that the mean of the quasi-mode $\psi_\varepsilon^{(k)}$ on $\Gke$ is $1$.
This motivates the choice of the flux $\Phi_\varepsilon^{(k)}$ as
\begin{equation}\label{eq:def flux tube}
    \Phi_\varepsilon^{(k)} = \bigl\| \nabla \psi^{(k)}_1 \bigr\|_{L^2(\Omega_\varepsilon)}^{-2},
    \qquad \text{ so that } \qquad
    \frac{1}{|\Gke|} \int_{\Gke} \psi^{(k)}_{\Phi_\varepsilon^{(k)}} = 1 .
\end{equation}
We finally define the quasi-mode $\psi_\varepsilon^{(k)}$ as
\begin{equation}\label{eq:def quasi-mode tube}
    \psi_\varepsilon^{(k)} = \psi^{(k)}_{\Phi_\varepsilon^{(k)}},
\end{equation}
i.e., the unique zero-mean weak solution of \eqref{eq:tube neumann problem} with $\Phi = \Phi_\varepsilon^{(k)}$, 
and therefore, 
\begin{equation}\label{eq:tube energy psi}
    \mathfrak{q}_\varepsilon(\psi_\varepsilon^{(k)}, \psi_\varepsilon^{(k)}) = \Phi_\varepsilon^{(k)}.
\end{equation}

Moreover,
taking $v = \psi_\varepsilon^{(j)}$ in \eqref{eq:weak tube neumann problem} for some $j \in \{ 1, \ldots, N\}$,
as $\psi_\varepsilon^{(j)}$ has zero mean on $\Omega_\varepsilon$,
\begin{equation}\label{eq:tube energy matrix}
    \mathfrak{q}_\varepsilon \bigl(\psi_\varepsilon^{(k)}, \psi_\varepsilon^{(j)} \bigr)
    = \Phi_\varepsilon^{(k)} \, \frac{1}{|\Gke|} \int_{\Gke} \psi_\varepsilon^{(j)}.
\end{equation}
Hence the symmetric energy matrix
$\mathrm{Q}^\varepsilon = \bigl( \mathfrak{q}_\varepsilon( \psi_\varepsilon^{(i)}, \psi_\varepsilon^{(j)} ) \bigr)_{1 \leq i, j \leq N}$
contains both the fluxes $\Phi_\varepsilon^{(k)}$, on its diagonal,
and the means of each $\psi_\varepsilon^{(k)}$ on the other holes, off its diagonal.
The following proposition gives its asymptotics and fixes the value of the fluxes $\Phi_\varepsilon^{(k)}$.

\begin{prop}\label{prop:flux expansion}
    There are $C, \varepsilon_0 > 0$ such that, for all $\varepsilon < \varepsilon_0$ and $j \neq k \in \{1, \ldots, N\}$,
    \begin{equation}\label{eq:flux expansion}
        \Phi_\varepsilon^{(k)} = (\rek)^2 \frac{ |\mathcal{D}_k|}{L_k}
        \bigl( 1 + \mathrm{O}(\rek) \bigr),
        \qquad
        \bigl| \mathfrak{q}_\varepsilon \bigl( \psi_\varepsilon^{(j)}, \psi_\varepsilon^{(k)} \bigr) \bigr|
        \leq C \bigl( r_\varepsilon^{(j)} \rek \bigr)^{3/2} .
    \end{equation}
\end{prop}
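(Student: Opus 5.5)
The plan is to obtain both estimates from the two dual variational principles for the Neumann problem \eqref{eq:tube neumann problem} with $\Phi=1$. By \eqref{eq:weak tube neumann problem}, $\psi^{(k)}_1$ maximizes $v\mapsto 2\mathfrak{l}_k(v)-\mathfrak{q}_\varepsilon(v,v)$, so
\[
\bigl(\Phi_\varepsilon^{(k)}\bigr)^{-1}=\mathfrak{q}_\varepsilon(\psi^{(k)}_1,\psi^{(k)}_1)=\sup_{v\in H^1(\Omega_\varepsilon)}\frac{\mathfrak{l}_k(v)^2}{\mathfrak{q}_\varepsilon(v,v)} .
\]
Dually, by the Thomson principle, $(\Phi_\varepsilon^{(k)})^{-1}=\min\int_{\Omega_\varepsilon}|J|^2$. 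The minimum runs over vector fields $J\in L^2$ with $\operatorname{div}J=-1/|\Omega_\varepsilon|$, $J\cdot n=0$ on $\partial\Omega_\varepsilon\setminus\Gke$ and $J\cdot n=1/|\Gke|$ on $\Gke$. A test function therefore gives a lower bound on $1/\Phi_\varepsilon^{(k)}$, and an admissible flow gives an upper bound.

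\textbf{Lower bound on $1/\Phi$.} Take the test function $v_k=s-h_k(\rho)$ in $\Tke$ and $v_k=0$ in $\Omega$; it is continuous across $\Ske$, hence in $H^1(\Omega_\varepsilon)$. On $\Gke$ it equals $L_k-h_k(\rho)=L_k+\mathrm{O}((\rek)^2)$. Its mean over $\Omega_\varepsilon$ is $\mathrm{O}((\rek)^2)$, so $\mathfrak{l}_k(v_k)=L_k(1+\mathrm{O}(\rek))$. Since $|\nabla v_k|^2=1+|\nabla h_k|^2=1+\mathrm{O}((\rek)^2)$ by \eqref{eq:tube graph}, and the tube volume is $(\rek)^2|\mathcal{D}_k|(L_k+\mathrm{O}(\ske))$, we get $\mathfrak{q}_\varepsilon(v_k,v_k)=(\rek)^2|\mathcal{D}_k|L_k(1+\mathrm{O}(\rek))$. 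Hence $1/\Phi_\varepsilon^{(k)}\ge \frac{L_k}{(\rek)^2|\mathcal{D}_k|}(1-\mathrm{O}(\rek))$.

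\textbf{Upper bound on $1/\Phi$.} Build an admissible flow $J$ as follows. In the tube, take the nearly uniform field $n_k/|\Dke|$, corrected by $\mathrm{O}((\rek)^2)$ to absorb the bulk divergence and the tilt $\nabla h_k$. In $\Omega$, take $J=-\nabla_x\int_{\Ske}G(x,y)\,\mathrm{d}\sigma(y)/|\Ske|$, with $G$ the Neumann Green function; this field spreads the unit flux from the mouth and produces the uniform sink. Its energy in $\Omega$ is the self-energy of a uniform density of unit mass on a patch of diameter $\sim\rek$. By \cref{lem:Green scaling exact}, this is $\mathrm{O}(1/\rek)$, computed with the same potential integrals as in \cref{lem:integrals circular}. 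The small mismatch of normal fluxes on the curved mouth $\Ske$ is repaired by a corrector whose energy is controlled through \cref{lem:poincare tube} and elliptic estimates. This yields $1/\Phi_\varepsilon^{(k)}\le \frac{L_k}{(\rek)^2|\mathcal{D}_k|}+\mathrm{O}(1/\rek)$, and together with the lower bound it gives the first claim of \eqref{eq:flux expansion}.

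\textbf{Off-diagonal entries.} Let $t_k=\alpha_k v_k+\beta_k$, with $\alpha_k$ chosen to match $\psi_\varepsilon^{(k)}$, and write $e_k=\psi_\varepsilon^{(k)}-t_k$. The quadratic identity behind the variational principle gives
\[
\mathfrak{q}_\varepsilon(e_k,e_k)\lesssim \bigl(\Phi_\varepsilon^{(k)}\bigr)^2\Bigl(\tfrac{1}{\Phi_\varepsilon^{(k)}}-\tfrac{\mathfrak{l}_k(v_k)^2}{\mathfrak{q}_\varepsilon(v_k,v_k)}\Bigr)=\mathrm{O}\bigl((\rek)^3\bigr),
\]
using the two bounds above. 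Then split
\[
\mathfrak{q}_\varepsilon(\psi_\varepsilon^{(j)},\psi_\varepsilon^{(k)})=\alpha_k\,\mathfrak{q}_\varepsilon(\psi_\varepsilon^{(j)},v_k)+\mathfrak{q}_\varepsilon(\psi_\varepsilon^{(j)},e_k).
\]
The first term equals $\alpha_k\Phi_\varepsilon^{(j)}\mathfrak{l}_j(v_k)$. Since $v_k$ vanishes on $\Gamma_j^\varepsilon$, $\mathfrak{l}_j(v_k)$ reduces to minus the bulk mean of $v_k$, which is $\mathrm{O}((\rek)^2)$, so this term is negligible. By Cauchy--Schwarz, the second term is bounded by $\sqrt{\Phi_\varepsilon^{(j)}}\,\|\nabla e_k\|\lesssim r_\varepsilon^{(j)}(\rek)^{3/2}$.

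\textbf{Main obstacle.} The crude splitting above, even after symmetrizing in $j,k$, only gives $(r_\varepsilon^{(j)}\rek)^{5/4}$, which is weaker than the claimed $(r_\varepsilon^{(j)}\rek)^{3/2}$. To reach the stated exponent, I would apply the splitting to both functions at once. I would use the sharper Green-function trial $T_k$ built from the Thomson flow above (linear in the tube, $-\Phi_\varepsilon^{(k)}G(\cdot,\xk)$ in the bulk) in place of $t_k$. Then
\[
\mathfrak{q}_\varepsilon(\psi^{(j)},\psi^{(k)})=\mathfrak{q}_\varepsilon(T_j,T_k)+\mathfrak{q}_\varepsilon(T_j,E_k)+\mathfrak{q}_\varepsilon(E_j,T_k)+\mathfrak{q}_\varepsilon(E_j,E_k),
\]
with $E_k=\psi_\varepsilon^{(k)}-T_k$. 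The term $\mathfrak{q}_\varepsilon(E_j,E_k)$ is bounded by $(r_\varepsilon^{(j)}\rek)^{3/2}$ through $\|\nabla E_k\|\lesssim(\rek)^{3/2}$. The cross terms are evaluated through the weak form, exploiting that $\nabla T_j$ is bounded near tube $k$, because the holes are separated. Checking that these cross terms are indeed this small is the step I expect to be hardest.
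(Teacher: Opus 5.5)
\textbf{First estimate.} Your bound for $\Phi_\varepsilon^{(k)}$ is essentially the paper's argument: the tube test function gives one side, and an equilibrated Prager--Synge/Thomson flux whose bulk part is a Neumann--Green potential of energy $\mathrm{O}(1/r_\varepsilon^{(k)})$ gives the other. Your flow needs two repairs, both easy. First, an admissible field must have $\operatorname{div} J = +1/|\Omega_\varepsilon|$; with your $-1/|\Omega_\varepsilon|$ and an outflow $+1$ through $\Gamma_k^\varepsilon$, the divergence theorem shows the admissible set is empty. Second, the flow must also be defined in the tubes $i \neq k$, since the source lives there too. The paper needs no corrector: it puts $f_{k,i} n_i$, with $f_{k,i} = \delta_{ik}/|\mathcal{D}_{k,\varepsilon}| - (L_i - s)/|\Omega_\varepsilon|$, in every tube. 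It then uses the exact outgoing fluxes $f_{k,i}(n_i\cdot n)$ as Neumann data of a bulk problem with source $1/|\Omega_\varepsilon|$, whose compatibility holds exactly.

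\textbf{The gap: the off-diagonal bound.} You leave this estimate open, and the refinement you propose does not work as stated. A function that is linear in the tube and equal to $-\Phi_\varepsilon^{(k)} G(\cdot, x^{(k)})$ in $\Omega$ is not in $H^1(\Omega_\varepsilon)$. It jumps across $\Sigma_{k,\varepsilon}$, and the point-source Green function has infinite Dirichlet energy near $x^{(k)}$, since $|x - x^{(k)}|^{-4}$ is not integrable there. So it cannot be inserted in the weak formulation.

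No sharper trial is needed, however: the four-term expansion already closes with your crude $t_k = \alpha_k v_k + \beta_k$. Write $r_i = r_\varepsilon^{(i)}$ and test the weak formulation of $\psi_\varepsilon^{(k)}$ with $t_j$:
\[
\mathfrak{q}_\varepsilon(t_j, e_k) = \alpha_j \Phi_\varepsilon^{(k)} \mathfrak{l}_k(v_j) - \alpha_j \alpha_k\, \mathfrak{q}_\varepsilon(v_j, v_k) = \mathrm{O}(r_j^2 r_k^2).
\]
This holds because $\nabla v_j$ and $\nabla v_k$ have disjoint supports, and because $v_j$ vanishes on $\Gamma_k^\varepsilon$, so that $\mathfrak{l}_k(v_j) = -|\Omega_\varepsilon|^{-1}\int v_j = \mathrm{O}(r_j^2)$. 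The symmetric cross term is handled the same way, and $\mathfrak{q}_\varepsilon(t_j, t_k) = 0$. Finally $|\mathfrak{q}_\varepsilon(e_j, e_k)| \le \|\nabla e_j\|\,\|\nabla e_k\| \le C (r_j r_k)^{3/2}$, by your own energy estimate. This gives the claimed bound.

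Your $(r_j r_k)^{5/4}$ came from applying Cauchy--Schwarz to $\mathfrak{q}_\varepsilon(\psi_\varepsilon^{(j)}, e_k)$ with the full $\|\nabla \psi_\varepsilon^{(j)}\| \sim r_j$, instead of first peeling off $t_j$ and evaluating that piece exactly through the weak form.

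\textbf{Comparison with the paper.} The paper runs the same bilinear expansion with the vector fields $\Phi_\varepsilon^{(k)} F_k$ in place of $\alpha_k \nabla v_k$. Equilibration, $\int F_k \cdot \nabla v = \mathfrak{l}_k(v)$, collapses the cross terms exactly. The price is bounding $\Phi_\varepsilon^{(j)}\Phi_\varepsilon^{(k)}\int_\Omega \nabla \hat f_j \cdot \nabla \hat f_k$ by Cauchy--Schwarz with the $C/r$ bulk energies. In your potential version, once completed as above, this interaction term vanishes identically. The flux construction is still needed, though, to obtain $\|\nabla e_k\|^2 = \mathrm{O}(r_k^3)$.
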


\begin{proof}
    In this proof, $C$ denotes a constant independent of $\varepsilon$ that may change from line to line,
    and we use the index $i \in \{1, \ldots, N\}$ for the tubes.
    Let us first state an estimate used throughout the proof.
    From \eqref{eq:sk} and \eqref{eq:tube graph}, $|h_i| \leq s_\varepsilon^{(i)} = \mathrm{O}((r_\varepsilon^{(i)})^2)$ on $\mathcal{D}_{i, \varepsilon}$, so that
    \begin{equation}\label{eq:tube uniform field energy}
        |\mathcal{T}_{i, \varepsilon}|
        = \int_{\mathcal{D}_{i, \varepsilon}} \bigl( L_i - h_i(\rho) \bigr) \, \d\rho
        = L_i |\mathcal{D}_{i, \varepsilon}| \bigl(1 + \mathrm{O}((r_\varepsilon^{(i)})^2)\bigr).
    \end{equation}
    Fix $k \in \{1, \ldots, N\}$.
    We bound $\Phi_\varepsilon^{(k)}$ from above with a trial function,
    and from below with a trial flux,
    which we then use to estimate the off-diagonal entries of $\mathrm{Q}^\varepsilon$.

    \paragraph{Upper bound.}
    Let $v \in H^1(\Omega_\varepsilon)$ with $\mathfrak{l}_k(v) \neq 0$.
    Taking $v$ in \eqref{eq:weak tube neumann problem} with $\Phi = \Phi_\varepsilon^{(k)}$,
    the Cauchy--Schwarz inequality and \eqref{eq:tube energy psi} give
    $\Phi_\varepsilon^{(k)} |\mathfrak{l}_k(v)| = |\mathfrak{q}_\varepsilon(\psi_\varepsilon^{(k)}, v)| \leq \sqrt{\Phi_\varepsilon^{(k)}} \sqrt{\mathfrak{q}_\varepsilon(v, v)}$,
    that is
    \begin{equation}\label{eq:tube dirichlet principle}
        \Phi_\varepsilon^{(k)} \leq \frac{\mathfrak{q}_\varepsilon(v, v)}{\mathfrak{l}_k(v)^2} .
    \end{equation}
    Let us now construct a $v$ that gives the upper bound.
    We choose $v = 0$ outside $\Tke$ and, in the local coordinates $(\rho, s)$ of
    \eqref{eq:tube lim},
    \[
        v(\rho, s) = \frac{s - h_k(\rho)}{L_k}
        \qquad \text{in } \Tke .
    \]
    As $v$ vanishes on the mouth $\Ske$, it is continuous across it, and $v \in H^1(\Omega_\varepsilon)$.
    On $\Gke = \{ s = L_k \}$,
    $v = (L_k - h_k(\rho)) / L_k$,
    so that, as $|\Gke| = |\Dke|$,
    its mean on $\Gke$ is $|\Tke| / (L_k |\Dke|) = 1 + \mathrm{O}((\rek)^2)$ by \eqref{eq:tube uniform field energy}.
    Moreover,
    as $0 \leq v \leq C$ in $\Tke$ and $|\Tke| \leq C (\rek)^2$,
    the mean of $v$ on $\Omega_\varepsilon$ is bounded by $C (\rek)^2$,
    so that $\mathfrak{l}_k(v) = 1 + \mathrm{O}((\rek)^2)$.
    Finally,
    in $\Tke$,
    $\nabla v = (n_k - \nabla h_k) / L_k$
    with $\nabla h_k \perp n_k$, so
    $|\nabla v|^2 = (1 + \mathrm{O}((\rek)^2)) / L_k^2$
    by \eqref{eq:tube graph},
    and $\mathfrak{q}_\varepsilon(v, v) = |\Tke| \, (1 + \mathrm{O}((\rek)^2)) / L_k^2$.
    Plugging these estimates into \eqref{eq:tube dirichlet principle},
    and using \eqref{eq:tube uniform field energy}, we obtain
    \begin{equation}\label{eq:tube upper bound}
        \Phi_\varepsilon^{(k)}
        \leq \frac{|\Dke|}{L_k} \bigl( 1 + C (\rek)^2 \bigr).
    \end{equation}

    \paragraph{Lower bound.}
    This part is based on an idea of Prager--Synge (see the original paper \cite{MR25902}
    and \cite{MR4132116} for a modern presentation).
    Assume there exists $F \in L^2(\Omega_\varepsilon; \real^3)$ which satisfies
    \begin{equation}\label{eq:tube admissible flux}
        \int_{\Omega_\varepsilon} F \cdot \nabla v
        = \mathfrak{l}_k(v)
        \qquad \text{for every } v \in H^1(\Omega_\varepsilon).
    \end{equation}
    Taking $v = \psi_\varepsilon^{(k)}$ in \eqref{eq:tube admissible flux}
    and comparing it with \eqref{eq:weak tube neumann problem} for $\Phi = \Phi_\varepsilon^{(k)}$,
    one has 
    \[
        \Phi_\varepsilon^{(k)} \int_{\Omega_\varepsilon} F \cdot \nabla \psi_\varepsilon^{(k)} = 
        \Phi_\varepsilon^{(k)} \, \mathfrak{l}_k(\psi_\varepsilon^{(k)})
        = \mathfrak{q}_\varepsilon(\psi_\varepsilon^{(k)}, \psi_\varepsilon^{(k)}).
    \]
    Hence, expanding the square and using \eqref{eq:tube energy psi},
    \begin{equation}\label{eq:tube PragerSynge}\begin{aligned}
        \int_{\Omega_\varepsilon} \bigl| \nabla \psi_\varepsilon^{(k)} - \Phi_\varepsilon^{(k)} F \bigr|^2
        & = \mathfrak{q}_\varepsilon \bigl( \psi_\varepsilon^{(k)}, \psi_\varepsilon^{(k)} \bigr)
        - 2 \, \mathfrak{q}_\varepsilon \bigl( \psi_\varepsilon^{(k)}, \psi_\varepsilon^{(k)} \bigr)
        + \bigl( \Phi_\varepsilon^{(k)} \bigr)^2 \int_{\Omega_\varepsilon} |F|^2\\
        & = \bigl( \Phi_\varepsilon^{(k)} \bigr)^2 \int_{\Omega_\varepsilon} |F|^2 - \Phi_\varepsilon^{(k)} .
    \end{aligned}\end{equation}
    Therefore, to show the desired lower bound,
    it suffices to construct $F$
    that verifies \eqref{eq:tube admissible flux} and with $\left\| F \right\|_{L^2(\Omega_\varepsilon)}^2$ bounded by
    $L_k / |\Dke| + C / \rek$.

    In each tube $\mathcal{T}_{i, \varepsilon}$,
    using the coordinates $(\rho, s)$ of \eqref{eq:tube lim}, we define
    \begin{equation}\label{eq:def tube trial flux}
        F_k = f_{k, i} \, n_i \quad \text{in } \mathcal{T}_{i, \varepsilon},
        \qquad \text{ with } \qquad
        f_{k, i}(\rho, s) = \frac{\delta_{ik}}{|\Dke|} - \frac{L_i - s}{|\Omega_\varepsilon|} .
    \end{equation}
    In $\Omega$, we take $F_k = \nabla \hat f_k$,
    where $\hat f_k$ extends the fluxes $f_{k, i}$ to the bulk:
    it is the zero-mean solution of
    \begin{equation}\label{eq:tube trial bulk}
        \left\{
            \begin{aligned}
                \Delta \hat f_k & = \frac{1}{|\Omega_\varepsilon|} && \text{in } \Omega, \\
                \partial_n \hat f_k & = f_{k, i} \, (n_i \cdot n) && \text{on } \Sigma_{i, \varepsilon}, \quad i \in \{1, \ldots, N\}, \\
                \partial_n \hat f_k & = 0 && \text{elsewhere on } \partial\Omega .
            \end{aligned}
        \right.
    \end{equation}
    From \cite[Proposition 2.3]{aramaki2018existence},
    since the sources in \eqref{eq:tube trial bulk} are bounded,
    there exists a unique $\hat f_k \in W^{1, 4}(\Omega) \subset C^0(\overline{\Omega})$ solving \eqref{eq:tube trial bulk} if and only if
    \begin{equation}\label{eq:tube compatibility condition}
        \sum_{i=1}^N \int_{\Sigma_{i, \varepsilon}} f_{k, i} \, (n_i \cdot n) \, \d\sigma = \frac{|\Omega|}{|\Omega_\varepsilon|} .
    \end{equation}
    Let us show that this condition is satisfied.
    For a fixed $i \in \{1, \ldots, N\}$,
    from the parametrization of $\Sigma_{i, \varepsilon}$ in \eqref{eq:def tube mouth},
    one has $\d\sigma = \sqrt{1+|\nabla h_i|^2}\,\d\rho$ on $\Sigma_{i, \varepsilon}$.
    Moreover, since $\Omega$
    lies in $\{s < h_i(\rho)\}$, $n = (n_i - \nabla h_i)/\sqrt{1+|\nabla h_i|^2}$.
    As~$\nabla h_i \perp n_i$, this gives $(n_i\cdot n)\,\d\sigma = \d\rho$, hence
    \[
        \int_{\Sigma_{i, \varepsilon}} f_{k, i} \, (n_i \cdot n) \, \d\sigma
        = \int_{\mathcal{D}_{i, \varepsilon}} \Bigl( \frac{\delta_{ik}}{|\Dke|} - \frac{L_i - h_i(\rho)}{|\Omega_\varepsilon|} \Bigr) \d\rho
        = \delta_{ik} - \frac{|\mathcal{T}_{i, \varepsilon}|}{|\Omega_\varepsilon|}.
    \]
    Summing over $i$, and noticing that $\sum_{i=1}^N |\mathcal{T}_{i, \varepsilon}| = |\Omega_\varepsilon| - |\Omega|$,
    leads to \eqref{eq:tube compatibility condition}.

    Let us now show that $F_k$ satisfies \eqref{eq:tube admissible flux}.
    Let $v \in H^1(\Omega_\varepsilon)$.
    The idea is to integrate by parts in $\Omega$ and in each $\mathcal{T}_{i, \varepsilon}$ separately.
    For a fixed $i \in \{1, \ldots, N\}$,
    $F_k$ is smooth up to the boundary in~$\mathcal{T}_{i, \varepsilon}$,
    with $\nabla \cdot F_k = \partial_s f_{k, i} = 1 / |\Omega_\varepsilon|$.
    Moreover, $F_k$ is tangent to the lateral boundary of $\mathcal{T}_{i, \varepsilon}$,
    the outward normal of $\mathcal{T}_{i, \varepsilon}$ is $n_i$ on $\Gamma_i^\varepsilon$ and $-n$ on $\Sigma_{i, \varepsilon}$,
    and $f_{k, i} = \delta_{ik} / |\Dke|$ on $\Gamma_i^\varepsilon$, so that
    \begin{equation}\label{eq:tube flux tube part}
        \int_{\mathcal{T}_{i, \varepsilon}} F_k \cdot \nabla v
        = - \frac{1}{|\Omega_\varepsilon|} \int_{\mathcal{T}_{i, \varepsilon}} v
        + \frac{\delta_{ik}}{|\Dke|} \int_{\Gamma_i^\varepsilon} v
        - \int_{\Sigma_{i, \varepsilon}} f_{k, i} \, (n_i \cdot n) \, v \, \d\sigma .
    \end{equation}
    In $\Omega$, as $v|_\Omega \in H^1(\Omega)$,
    the weak formulation of \eqref{eq:tube trial bulk} reads
    \begin{equation}\label{eq:tube flux bulk part}
        \int_{\Omega} F_k \cdot \nabla v
        = \int_\Omega \nabla \hat f_k \cdot \nabla v
        = - \frac{1}{|\Omega_\varepsilon|} \int_\Omega v
        + \sum_{i=1}^{\ndoors} \int_{\Sigma_{i, \varepsilon}} f_{k, i} \, (n_i \cdot n) \, v \, \d\sigma .
    \end{equation}
    Summing \eqref{eq:tube flux bulk part} and \eqref{eq:tube flux tube part} over $i$
    leads to \eqref{eq:tube admissible flux}, 
    as $|\Gke| = |\Dke|$.

    It remains to bound $\left\| F_k \right\|_{L^2(\Omega_\varepsilon)}^2$.
    Let us first compute the integral over the tubes.
    For $i \in \{1, \ldots, N\}$ and $(\rho, s)$ such that $\rho \in \mathcal{D}_{i, \varepsilon}$ and $h_i(\rho) \leq s \leq L_i$,
    one has~$0 \leq L_i - s \leq L_i + s_\varepsilon^{(i)} \leq C$,
    and $|\Dke| = (\rek)^2 |\mathcal{D}_k|$,
    so that
    \begin{equation}\label{eq:tube bound jk}
        \Bigl| f_{k, i} - \frac{\delta_{ik}}{|\Dke|} \Bigr| \leq C,
        \qquad \text{and} \qquad
        |f_{k, i}| \leq \frac{\delta_{ik}}{|\Dke|} + C \leq \frac{C}{(r_\varepsilon^{(i)})^2}.
    \end{equation}
    Integrating $|f_{k, i}|^2$ over $\mathcal{T}_{i, \varepsilon}$,
    using $|\mathcal{T}_{i, \varepsilon}| \leq C (r_\varepsilon^{(i)})^2$ and \eqref{eq:tube uniform field energy},
    and summing over $i$ leads to
    \begin{equation}\label{eq:tube energy tubes}
        \sum_{i=1}^{\ndoors} \int_{\mathcal{T}_{i, \varepsilon}} |F_k|^2
        \leq \frac{|\Tke|}{|\Dke|^2} + C
        \leq \frac{L_k}{|\Dke|} + C.
    \end{equation}
    Now for the bulk part,
    taking $v = \hat f_k$ in the weak formulation of \eqref{eq:tube trial bulk},
    as $\hat f_k$ has zero mean on $\Omega$, $(n_i \cdot n) \, \d\sigma = \d\rho$ on $\Sigma_{i, \varepsilon}$,
    and $\int_{\mathcal{D}_{i, \varepsilon}} |f_{k, i}| \leq \delta_{ik} + C |\mathcal{D}_{i, \varepsilon}|$ by \eqref{eq:tube bound jk},
    \begin{equation}\label{eq:tube energy bulk trace}
        \int_\Omega |F_k|^2
        = \sum_{i=1}^N \int_{\Sigma_{i, \varepsilon}} f_{k, i} \, \hat f_k \, (n_i \cdot n) \, \d\sigma
        \leq C \, \| \hat f_k \|_{L^\infty(\Omega)} .
    \end{equation}
    By \cref{lem:Green solution},
    $\hat f_k(x) = \int_{\partial \Omega} G(x, y) \, g(y) \, \d\sigma(y)$ for $x \in \Omega$,
    where~$G$ is the Green function of \cref{def:Neumann Green}
    and $g = \partial_n \hat f_k$ is given by \eqref{eq:tube trial bulk}.
    Hence, by \cref{lem:Green David} and \eqref{eq:tube bound jk},
    \begin{equation}\label{eq:tube J pointwise}
        |\hat f_k(x)|
        \leq C \sum_{i=1}^{\ndoors} \Bigl( \frac{\delta_{ik}}{|\Dke|} + 1 \Bigr) \int_{\Sigma_{i, \varepsilon}} \frac{\d\sigma(y)}{|x - y|},
        \qquad x \in \Omega.
    \end{equation}
    To compute the integral in \eqref{eq:tube J pointwise}, fix $i \in \{1, \ldots, N\}$ and $x \in \Omega$,
    and let $\rho_x$ be the orthogonal projection of $x - x^{(i)}$ onto $T_{x^{(i)}}(\partial \Omega)$.
    For $y = x^{(i)} + \rho + h_i(\rho) n_i \in \Sigma_{i, \varepsilon}$,
    one has $|x - y| \geq |\rho_x - \rho|$.
    Since $\rho \mapsto |\rho_x - \rho|^{-1}$ is radially decreasing around $\rho_x$,
    its integral over $\mathcal{D}_{i, \varepsilon}$ is at most its integral over the disc $B(\rho_x, a_i)$ of the same area,
    that is with $\pi a_i^2 = |\mathcal{D}_{i, \varepsilon}| = (r_\varepsilon^{(i)})^2 |\mathcal{D}_i|$.
    Therefore, using that $\d\sigma \leq 2 \, \d\rho$ for $\varepsilon$ small enough,
    \[
        \int_{\Sigma_{i, \varepsilon}} \frac{\d\sigma(y)}{|x - y|}
        \leq 2 \int_{\mathcal{D}_{i, \varepsilon}} \frac{\d\rho}{|\rho_x - \rho|}
        \leq 2 \int_{B(\rho_x, a_i)} \frac{\d\rho}{|\rho_x - \rho|}
        = 4 \pi a_i
        = 4 \sqrt{\pi |\mathcal{D}_i|} \, r_\varepsilon^{(i)} .
    \]
    Plugging this into \eqref{eq:tube J pointwise} gives
    $\| \hat f_k \|_{L^\infty(\Omega)} \leq C / \rek$,
    and \eqref{eq:tube energy bulk trace} becomes
    \begin{equation}\label{eq:tube energy bulk}
        \int_\Omega |F_k|^2 = \int_\Omega |\nabla \hat f_k|^2 \leq \frac{C}{\rek} .
    \end{equation}
    Finally,
    the combination of \eqref{eq:tube energy tubes} and \eqref{eq:tube energy bulk} gives,
    as $|\Dke| = (\rek)^2 |\mathcal{D}_k|$,
    \[
        \bigl\| F_k \bigr\|_{L^2(\Omega_\varepsilon)}^2
        \leq \frac{L_k}{|\Dke|} + \frac{C}{\rek}
        \leq \frac{L_k}{|\Dke|} \bigl( 1 + C \rek \bigr),
        \qquad \text{so that} \qquad
        \Phi_\varepsilon^{(k)} \geq \frac{|\Dke|}{L_k} \bigl( 1 - C \rek \bigr).
    \]
    Together with \eqref{eq:tube upper bound},
    this gives the first estimate of \eqref{eq:flux expansion}.
    Moreover,
    \eqref{eq:tube PragerSynge} with $F = F_k$, \eqref{eq:tube upper bound} and the previous bound on $\| F_k \|_{L^2(\Omega_\varepsilon)}$ give
    \begin{equation}\label{eq:tube remainder}
        \bigl\| \nabla \psi_\varepsilon^{(k)} - \Phi_\varepsilon^{(k)} F_k \bigr\|_{L^2(\Omega_\varepsilon)}^2
        = \Phi_\varepsilon^{(k)} \Bigl( \Phi_\varepsilon^{(k)} \bigl\| F_k \bigr\|_{L^2(\Omega_\varepsilon)}^2 - 1 \Bigr)
        \leq \Phi_\varepsilon^{(k)} \Bigl( \bigl( 1 + C (\rek)^2 \bigr) \bigl( 1 + C \rek \bigr) - 1 \Bigr)
        \leq C (\rek)^3 .
    \end{equation}

    \paragraph{Interactions.}
    Let $j \neq k \in \{1, \ldots, N\}$.
    By \eqref{eq:tube admissible flux} and \eqref{eq:weak tube neumann problem},
    $\Phi_\varepsilon^{(k)} \int_{\Omega_\varepsilon} F_k \cdot \nabla \psi_\varepsilon^{(j)} = \Phi_\varepsilon^{(k)} \mathfrak{l}_k(\psi_\varepsilon^{(j)}) = \mathfrak{q}_\varepsilon(\psi_\varepsilon^{(j)}, \psi_\varepsilon^{(k)})$,
    and similarly when exchanging $j$ and $k$.
    Hence, as in \eqref{eq:tube PragerSynge}, expanding the product gives
    \[
        \int_{\Omega_\varepsilon} \bigl( \nabla \psi_\varepsilon^{(j)} - \Phi_\varepsilon^{(j)} F_j \bigr) \cdot \bigl( \nabla \psi_\varepsilon^{(k)} - \Phi_\varepsilon^{(k)} F_k \bigr)
        = \mathfrak{q}_\varepsilon \bigl( \psi_\varepsilon^{(j)}, \psi_\varepsilon^{(k)} \bigr)
        - 2 \, \mathfrak{q}_\varepsilon \bigl( \psi_\varepsilon^{(j)}, \psi_\varepsilon^{(k)} \bigr)
        + \Phi_\varepsilon^{(j)} \Phi_\varepsilon^{(k)} \int_{\Omega_\varepsilon} F_j \cdot F_k ,
    \]
    that is,
    \begin{equation}\label{eq:expansion cross terms}
        \mathfrak{q}_\varepsilon \bigl( \psi_\varepsilon^{(j)}, \psi_\varepsilon^{(k)} \bigr)
        = \Phi_\varepsilon^{(j)} \Phi_\varepsilon^{(k)} \int_{\Omega_\varepsilon} F_j \cdot F_k
        - \int_{\Omega_\varepsilon} \bigl( \nabla \psi_\varepsilon^{(j)} - \Phi_\varepsilon^{(j)} F_j \bigr) \cdot \bigl( \nabla \psi_\varepsilon^{(k)} - \Phi_\varepsilon^{(k)} F_k \bigr) .
    \end{equation}

    Let us now bound the right-hand side in \eqref{eq:expansion cross terms}.
    By the Cauchy--Schwarz inequality and \eqref{eq:tube remainder},
    the last integral is bounded by $C (r_\varepsilon^{(j)} \rek)^{3/2}$.
    In each tube $\mathcal{T}_{i, \varepsilon}$,
    as $\delta_{ij} \delta_{ik} = 0$,
    \eqref{eq:tube bound jk} gives $|f_{j, i} f_{k, i}| \leq C (r_\varepsilon^{(i)})^{-2}$,
    so that, with $|\mathcal{T}_{i, \varepsilon}| \leq C (r_\varepsilon^{(i)})^2$,
    \[
        \left|\int_{\mathcal{T}_{i, \varepsilon}} F_j \cdot F_k\right| \leq C.
    \]
    In $\Omega$,
    the Cauchy--Schwarz inequality and \eqref{eq:tube energy bulk} give
    $|\int_\Omega \nabla \hat f_j \cdot \nabla \hat f_k| \leq C (r_\varepsilon^{(j)} \rek)^{-1/2}$.
    Hence, as $\Phi_\varepsilon^{(j)} \Phi_\varepsilon^{(k)} \leq C (r_\varepsilon^{(j)} \rek)^2$ by the first estimate of \eqref{eq:flux expansion},
    \[
        |\mathfrak{q}_\varepsilon(\psi_\varepsilon^{(j)}, \psi_\varepsilon^{(k)})| \leq C (r_\varepsilon^{(j)} \rek)^{3/2},
    \]
    which is the second estimate of \eqref{eq:flux expansion}.
\end{proof}

\subsection{The Dirichlet error}
\label{sub:tube:dirichlet error}
In this subsection,
we control the trace of $\psi_\varepsilon^{(k)}$ on each hole $\Gamma_j^\varepsilon$, $j \in \{1, \ldots, N\}$,
by splitting it into its mean
\begin{equation}\label{eq:tube means matrix}
    \mathrm{M}^\varepsilon_{jk} = \frac{1}{|\Gamma_j^\varepsilon|} \int_{\Gamma_j^\varepsilon} \psi_\varepsilon^{(k)}
\end{equation}
and its fluctuation around this mean.
By \eqref{eq:tube energy matrix},
the matrix of the means is
$\mathrm{M}^\varepsilon = \mathrm{diag}(\Phi_\varepsilon^{(1)}, \ldots, \Phi_\varepsilon^{(N)})^{-1} \, \mathrm{Q}^\varepsilon$.
In particular, $\mathrm{M}^\varepsilon_{kk} = 1$ by \eqref{eq:def flux tube},
and, for $j \neq k$, \cref{prop:flux expansion} gives
\[
    |\mathrm{M}^\varepsilon_{jk}| = \frac{|\mathrm{Q}^\varepsilon_{jk}|}{\Phi_\varepsilon^{(j)}} \leq C \Bigl( \frac{(\rek)^3}{r_\varepsilon^{(j)}} \Bigr)^{1/2}.
\]
Hence, denoting
\begin{equation}\label{eq:tube eta}
    \eta_\varepsilon = \Bigl( \frac{(\reo)^3}{\min_{1 \leq j \leq \ndoors} r_\varepsilon^{(j)}} \Bigr)^{1/2} \geq \reo,
\end{equation}
it holds that, for all $j, k \in \{1, \ldots, N\}$,
\begin{equation}\label{eq:tube means}
    \bigl| \mathrm{M}^\varepsilon_{jk} - \delta_{jk} \bigr| \leq C \eta_\varepsilon.
\end{equation}
For the fluctuations around the mean,
we expand $\psi_\varepsilon^{(k)}$ on the Neumann eigenfunctions of the cross-section in the upper half of the $j$-th tube:
the corresponding modes are explicit, and controlled by the energy of~$\psi_\varepsilon^{(k)}$.

\begin{prop}\label{prop:dirichlet tube}
    There is $C > 0$ such that, for $\varepsilon$ small enough and $j, k \in \{1, \ldots, N\}$,
    the function $\psi_\varepsilon^{(k)}$ is continuous on $\overline{\Gamma_j^\varepsilon}$ and
    \begin{equation}\label{eq:dirichlet tube}
        \bigl\| \psi_\varepsilon^{(k)} - \mathrm{M}^\varepsilon_{jk} \bigr\|_{L^\infty(\Gamma_j^\varepsilon)}
        \leq C r_\varepsilon^{(j)} \, \bigl\| \nabla \psi_\varepsilon^{(k)} \bigr\|_{L^2(\Omega_\varepsilon)}
        \leq C (\reo)^2,
        \qquad \text{so that} \qquad
        \bigl\| \psi_\varepsilon^{(k)} - \delta_{jk} \bigr\|_{L^\infty(\Gamma_j^\varepsilon)}
        \leq C \eta_\varepsilon ,
    \end{equation}
    with $\eta_\varepsilon$ defined in \eqref{eq:tube eta}.
\end{prop}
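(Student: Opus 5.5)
The plan is to work in the straight upper half of the $j$-th tube, $\mathcal{U}_j = \{ x^{(j)} + \rho + s\, n_j : \rho \in \mathcal{D}_{j,\varepsilon},\ L_j/2 < s < L_j \}$. For $\varepsilon$ small this is an exact cylinder, since $s_\varepsilon^{(j)} = \mathrm{O}((r_\varepsilon^{(j)})^2) < L_j/2$. There I will solve the equation for $\psi_\varepsilon^{(k)}$ by separation of variables. Let $(\mu_m, e_m)_{m \geq 0}$ be the Neumann eigenpairs of the fixed planar domain $\mathcal{D}_j$, with $\mu_0 = 0$, $e_0$ constant, and $(e_m)$ orthonormal in $L^2(\mathcal{D}_j)$. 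Then $e_m^\varepsilon(\rho) = (r_\varepsilon^{(j)})^{-1} e_m(\rho / r_\varepsilon^{(j)})$ are orthonormal Neumann eigenfunctions on $\mathcal{D}_{j,\varepsilon}$ with eigenvalues $\mu_m / (r_\varepsilon^{(j)})^2$. I will write
\[
    \psi_\varepsilon^{(k)}(\rho, s) = a_0(s) + \sum_{m \geq 1} a_m(s)\, e_m^\varepsilon(\rho), \qquad a_m(s) = \int_{\mathcal{D}_{j,\varepsilon}} \psi_\varepsilon^{(k)}(\rho, s)\, e_m^\varepsilon(\rho)\, \mathrm{d}\rho ,
\]
in $\mathcal{U}_j$.

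\textbf{Step 1: the ODEs for the coefficients.} I test the weak formulation \eqref{eq:weak tube neumann problem} against $v(\rho,s) = e_m^\varepsilon(\rho)\chi(s)$ with $\chi$ supported in $(L_j/2, L_j]$. This uses the Neumann condition on the lateral wall, the constant flux $\Phi_\varepsilon^{(k)}\delta_{jk}/|\Gamma_j^\varepsilon|$ on $\Gamma_j^\varepsilon$, and the constant source $\Phi_\varepsilon^{(k)}/|\Omega_\varepsilon|$. Since both the source and the top flux are constant in $\rho$, they project only onto $e_0$. For $m \geq 1$ this gives
\[
    a_m'' = \mu_m (r_\varepsilon^{(j)})^{-2} a_m, \qquad a_m'(L_j) = 0,
\]
hence
\[
    a_m(s) = a_m(L_j)\, \frac{\cosh\bigl(\sqrt{\mu_m}(L_j - s)/r_\varepsilon^{(j)}\bigr)}{1} .
\]
The mode $a_0$ solves a second-order ODE with constant right-hand side. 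Its value at $s = L_j$ is exactly $|\mathcal{D}_{j,\varepsilon}|^{1/2}$ times the mean of $\psi_\varepsilon^{(k)}$ on $\Gamma_j^\varepsilon$, so that $a_0 e_0^\varepsilon = \mathrm{M}^\varepsilon_{jk}$ on $\Gamma_j^\varepsilon$.

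\textbf{Step 2: energy controls the top values, with exponential gain.} Because $\cosh$ grows away from the top, the energy of mode $m$ in $\mathcal{U}_j$ is
\[
    \int_{L_j/2}^{L_j} \Bigl( |a_m'|^2 + \frac{\mu_m}{(r_\varepsilon^{(j)})^2} |a_m|^2 \Bigr) \mathrm{d}s \;\geq\; c\, |a_m(L_j)|^2\, \frac{\sqrt{\mu_m}}{r_\varepsilon^{(j)}}\, \mathrm{e}^{\sqrt{\mu_m} L_j / r_\varepsilon^{(j)}} .
\]
Summing over $m$ with Parseval, each top coefficient is bounded by $\|\nabla \psi_\varepsilon^{(k)}\|_{L^2(\Omega_\varepsilon)}$ times an exponentially small factor.

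\textbf{Step 3: from coefficients to a sup norm.} I bound
\[
    \sup_{\Gamma_j^\varepsilon} \bigl| \psi_\varepsilon^{(k)} - \mathrm{M}^\varepsilon_{jk} \bigr| \leq \sum_{m \geq 1} |a_m(L_j)|\, \|e_m^\varepsilon\|_\infty .
\]
For this I need a polynomial bound $\|e_m\|_{L^\infty(\mathcal{D}_j)} \leq C(1+\mu_m)^{\beta}$ for the Neumann eigenfunctions of the fixed smooth domain $\mathcal{D}_j$, obtained by elliptic regularity and Sobolev embedding. I also need Weyl's law $\mu_m \sim c\, m$, together with $\|e_m^\varepsilon\|_\infty = (r_\varepsilon^{(j)})^{-1} \|e_m\|_\infty$. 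The series then converges uniformly, which gives continuity on $\overline{\Gamma_j^\varepsilon}$ since every term is smooth. Its sum is $\mathrm{O}(\mathrm{e}^{-c/r_\varepsilon^{(j)}}) \|\nabla\psi_\varepsilon^{(k)}\|$, which is far below $C r_\varepsilon^{(j)} \|\nabla \psi_\varepsilon^{(k)}\|_{L^2(\Omega_\varepsilon)}$. I expect this step to be the main obstacle: making the eigenfunction sup-norm bound and the summation rigorous, and justifying that the series identity holds pointwise up to the top corners. If this proves delicate, my fallback is to bound the trace in $H^{s}(\Gamma_j^\varepsilon)$ for $s > 1$ from the coefficients and use the scaled Sobolev embedding on $\mathcal{D}_{j,\varepsilon}$ instead.

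\textbf{Step 4: conclusion.} By \eqref{eq:tube energy psi} and \cref{prop:flux expansion}, $\|\nabla\psi_\varepsilon^{(k)}\|_{L^2(\Omega_\varepsilon)} = (\Phi_\varepsilon^{(k)})^{1/2} \leq C \rek$. Hence the fluctuation is at most $C r_\varepsilon^{(j)} \rek \leq C(\reo)^2$. Combining this with \eqref{eq:tube means} and $(\reo)^2 \leq \reo \leq \eta_\varepsilon$ from \eqref{eq:tube eta} yields
\[
    \bigl\| \psi_\varepsilon^{(k)} - \delta_{jk} \bigr\|_{L^\infty(\Gamma_j^\varepsilon)} \leq C \eta_\varepsilon .
\]
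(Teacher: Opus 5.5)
Your argument is correct, and its skeleton matches the paper's. Writing $r=r_\varepsilon^{(j)}$ and $\psi=\psi_\varepsilon^{(k)}$, both proofs use the transverse Neumann modes on the straight upper half of the $j$-th tube. Both test against $\chi(s)e_m^\varepsilon(\rho)$, for which $\mathfrak{l}_k$ vanishes. Both obtain the ODE $a_m''=\mu_m r^{-2}a_m$ with $a_m'(L_j)=0$, hence the $\cosh$ profile, and both control the coefficients through Bessel's inequality on the energy. (Minor slip: the zeroth term of your expansion should be $a_0(s)e_0^\varepsilon$, not $a_0(s)$.)

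The two routes differ only in how the coefficients are turned into a sup norm. The paper deliberately discards the exponential growth and uses only $\cosh^2\xi\ge\xi^2$ to get $\sum_n\mu_n^2a_n(L_j)^2\le Cr^4\|\nabla\psi\|^2$. It then shows that the rescaled trace $f$ is the Neumann solution of $-\Delta f+f=g$ on $\mathcal{D}_j$ with $\|g\|_{L^2}\le Cr\|\nabla\psi\|$. It concludes by $H^2$ regularity and $H^2(\mathcal{D}_j)\subset C(\overline{\mathcal{D}_j})$. This is exactly your fallback with $s=2$. It needs neither Weyl's law nor sup-norm bounds on eigenfunctions, and identifying $f$ with that solution settles continuity and the pointwise meaning of the expansion at once.

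Your route keeps the factor $\mathrm{e}^{-\sqrt{\mu_m}L_j/(2r)}$ in $|a_m(L_j)|$ and sums $|a_m(L_j)|\,\|e_m^\varepsilon\|_\infty$ term by term. You can take $\|e_m\|_{L^\infty}\le C(1+\mu_m)$, which follows from $\|e_m\|_{H^2}\le C(1+\mu_m)$ and the same embedding, together with $\mu_m\ge cm$ from Weyl's law. Then the series converges uniformly and gives a fluctuation of order $\mathrm{e}^{-c/r}\|\nabla\psi\|$, much stronger than the stated $Cr\|\nabla\psi\|$.

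That extra sharpness buys nothing for this statement's final bound. The bound $C\eta_\varepsilon$ is dominated by the off-diagonal means $|\mathrm{M}^\varepsilon_{jk}-\delta_{jk}|$, so the paper's cheaper polynomial estimate is all that is needed there. Your version does show, in addition, that the trace on each hole is constant up to exponentially small corrections.
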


\begin{proof}
    Fix $j, k \in \{1, \ldots, N\}$,
    denote $\psi = \psi_\varepsilon^{(k)}$ and $r = r_\varepsilon^{(j)}$,
    and use the coordinates $(\rho, s)$ of \eqref{eq:tube lim} in the $j$-th tube.
    The last two estimates of \eqref{eq:dirichlet tube} follow from the first one:
    by \eqref{eq:tube energy psi} and \eqref{eq:flux expansion},
    $\| \nabla \psi \|_{L^2(\Omega_\varepsilon)} = (\Phi_\varepsilon^{(k)})^{1/2} \leq C \rek$,
    which gives the second inequality as $r \rek \leq (\reo)^2$,
    and then the triangle inequality and \eqref{eq:tube means} give
    $\| \psi - \delta_{jk} \|_{L^\infty(\Gamma_j^\varepsilon)} \leq C (\reo)^2 + |\mathrm{M}^\varepsilon_{jk} - \delta_{jk}| \leq C \eta_\varepsilon$,
    as $(\reo)^2 \leq \reo \leq \eta_\varepsilon$ for $\varepsilon$ small enough.
    It remains to show the first inequality of \eqref{eq:dirichlet tube}.

    \paragraph{Transverse modes.}
    Denote by $0 = \mu_0 < \mu_1 \leq \mu_2 \leq \ldots$ the Neumann eigenvalues of $\mathcal{D}_j$,
    and by $(\phi_n)_{n \geq 0}$ an associated $L^2(\mathcal{D}_j)$-orthonormal basis of eigenfunctions, $\phi_0$ being constant.
    By scaling, the functions~$\phi_n^\varepsilon(\rho) = r^{-1} \phi_n(\rho / r)$
    form an $L^2(\mathcal{D}_{j, \varepsilon})$-orthonormal basis of Neumann eigenfunctions of $\mathcal{D}_{j, \varepsilon} = r \mathcal{D}_j$,
    with eigenvalues $\mu_n / r^2$, that is
    \begin{equation}\label{eq:tube transverse eigen}
        \forall v \in H^1(\mathcal{D}_{j, \varepsilon}), \qquad
        \int_{\mathcal{D}_{j, \varepsilon}} \nabla \phi_n^\varepsilon \cdot \nabla v
        = \frac{\mu_n}{r^2} \int_{\mathcal{D}_{j, \varepsilon}} \phi_n^\varepsilon \, v .
    \end{equation}
    The goal of this step is to decompose $\psi$ along the modes $\phi_n^\varepsilon$ in the upper half of the tube.
    As $s_\varepsilon^{(j)} = \mathrm{O}(r^2)$ by \eqref{eq:sk} and \eqref{eq:tube graph},
    for $\varepsilon$ small enough the cross-section of $\mathcal{T}_{j, \varepsilon}$ at any height $s \in (L_j/2, L_j)$ is $\mathcal{D}_{j, \varepsilon} = r \mathcal{D}_j$.
    For $n \geq 1$, set
    \begin{equation}\label{eq:tube transverse modes}
        a_n(s) = \int_{\mathcal{D}_{j, \varepsilon}} \psi(\rho, s) \, \phi_n^\varepsilon(\rho) \, \d\rho,
        \qquad s \in [L_j/2, L_j].
    \end{equation}
    As $\psi \in H^1(\mathcal{D}_{j, \varepsilon} \times (L_j/2, L_j))$,
    its restrictions to lines are absolutely continuous \cite[Theorem 4.21]{EvansGariepy}:
    up to modifying $\psi$ on a zero-measure set,
    for a.e.\ $\rho \in \mathcal{D}_{j, \varepsilon}$,
    the function $s \mapsto \psi(\rho, s)$ is absolutely continuous on $[L_j/2, L_j]$.
    Hence, the function $a_n$ is in $C^0([L_j/2, L_j]) \cap H^1(L_j/2, L_j)$.

    Take a smooth function $\chi \in C^\infty([L_j/2, L_j])$ vanishing near $L_j/2$,
    and let $w = \chi(s) \phi_n^\varepsilon(\rho)$ in $\mathcal{D}_{j, \varepsilon} \times (L_j/2, L_j)$, extended by $0$, so that $w \in H^1(\Omega_\varepsilon)$.
    As $\phi_n^\varepsilon$ has zero mean on $\mathcal{D}_{j, \varepsilon}$, $\int_{\Omega_\varepsilon} w = \int_{\Gke} w = 0$, that is $\mathfrak{l}_k(w) = 0$.
    Hence, by \eqref{eq:weak tube neumann problem},
    as $\nabla w = \chi \nabla_\rho \phi_n^\varepsilon + \chi' \phi_n^\varepsilon \, n_j$,
    and by \eqref{eq:tube transverse eigen} with $v = \psi(\cdot, s)$,
    \begin{equation}\label{eq:tube weak ode}\begin{aligned}
        0 = \mathfrak{q}_\varepsilon(\psi, w)
        & = \int_{L_j/2}^{L_j} \Bigl( \chi'(s) \int_{\mathcal{D}_{j, \varepsilon}} \partial_s \psi(\cdot, s) \, \phi_n^\varepsilon
        + \chi(s) \int_{\mathcal{D}_{j, \varepsilon}} \nabla_\rho \psi(\cdot, s) \cdot \nabla_\rho \phi_n^\varepsilon \Bigr) \d s\\
        &= \int_{L_j/2}^{L_j} \Bigl( \chi' a_n' + \frac{\mu_n}{r^2} \chi \, a_n \Bigr) \d s,
    \end{aligned}\end{equation}
    where we used the definition of $a_n$ in \eqref{eq:tube transverse modes} and the expression of $a_n'$.

    First, taking $\chi \in C^\infty_c((L_j/2, L_j))$ in \eqref{eq:tube weak ode}
    shows that 
    \begin{equation}\label{eq:ode an}
        a_n'' = \mu_n r^{-2} a_n,
    \end{equation}
    in the sense of distributions on~$(L_j/2, L_j)$.
    As $a_n$ is continuous, this shows that $a_n \in C^2([L_j/2, L_j])$ and that \eqref{eq:ode an} holds pointwise on $[L_j/2, L_j]$.
    Then, for $\chi$ such that $\chi(L_j/2) = 0$
    and $\chi(L_j) = 1$,
    an integration by parts gives
    \[
        \int_{L_j/2}^{L_j} \chi' a_n' = \chi(L_j) a_n'(L_j) - \int_{L_j/2}^{L_j} \chi a_n''.
    \]
    Using \eqref{eq:ode an} and \eqref{eq:tube weak ode} shows that $a_n'(L_j) = 0$.
    Therefore, solving \eqref{eq:ode an} with the boundary condition $a_n'(L_j) = 0$ gives
    \begin{equation}\label{eq:tube expansion}
        a_n(s) = a_n(L_j) \cosh \Bigl( \frac{\sqrt{\mu_n}}{r} (L_j - s) \Bigr),
        \qquad s \in [L_j/2, L_j].
    \end{equation}

    \paragraph{Energy bound.}
    By \eqref{eq:tube transverse eigen},
    the functions $r \mu_n^{-1/2} \phi_n^\varepsilon$, $n \geq 1$,
    are orthonormal for the scalar product~$(\theta, \theta') \mapsto \int_{\mathcal{D}_{j, \varepsilon}} \nabla \theta \cdot \nabla \theta'$.
    Hence, integrating Bessel's inequality over $s \in (L_j/2, L_j)$,
    and using \eqref{eq:tube transverse modes} and~\eqref{eq:tube expansion} with $\cosh^2 \xi \geq \xi^2$,
    \begin{equation}\label{eq:tube coefficients}
        \frac{L_j^3}{24 \, r^4} \sum_{n \geq 1} \mu_n^2 \, a_n(L_j)^2
        \leq \sum_{n \geq 1} \frac{\mu_n}{r^2} \int_{L_j/2}^{L_j} a_n(s)^2 \, \d s
        \leq \bigl\| \nabla \psi \bigr\|_{L^2(\Omega_\varepsilon)}^2 .
    \end{equation}

    \paragraph{Uniform bound on the hole.}
    It remains to turn the bound \eqref{eq:tube coefficients} on the coefficients of the trace of $\psi$
    into a uniform bound on $\Gamma_j^\varepsilon$.
    To work on the fixed cross-section $\mathcal{D}_j$, let
    \[
        f(y) = \psi(r y, L_j) - \mathrm{M}^\varepsilon_{jk},
        \qquad y \in \mathcal{D}_j,
    \]
    so that the left-hand side of \eqref{eq:dirichlet tube} is~$\| f \|_{L^\infty(\mathcal{D}_j)}$,
    and denote~$f_n = \int_{\mathcal{D}_j} f \, \phi_n$ its coordinates in the basis $(\phi_n)_{n \geq 0}$.
    It is clear by definition that $f$ has zero mean over $\mathcal{D}_j$, that is $f_0 = 0$.
    Then, for $n \geq 1$, as $\phi_n$ has zero mean,
    the same change of variables and the definitions of $\phi_n^\varepsilon$ and $a_n$ give
    \[
        f_n
        = \int_{\mathcal{D}_j} \psi(r y, L_j) \, \phi_n(y) \, \d y
        = \frac{1}{r^2} \int_{\mathcal{D}_{j, \varepsilon}} \psi(\rho, L_j) \, \phi_n(\rho / r) \, \d\rho
        = \frac{1}{r} \int_{\mathcal{D}_{j, \varepsilon}} \psi(\rho, L_j) \, \phi_n^\varepsilon(\rho) \, \d\rho
        = \frac{a_n(L_j)}{r} .
    \]
    Moreover, for $n \geq 1$, $\mu_n \geq \mu_1 > 0$ gives $1 + \mu_n \leq (1 + \mu_1^{-1}) \mu_n$.
    Hence, by \eqref{eq:tube coefficients},
    \begin{equation}\label{eq:tube trace coefficients}
        \sum_{n \geq 1} (1 + \mu_n)^2 f_n^2
        \leq \bigl( 1 + \mu_1^{-1} \bigr)^2 r^{-2} \sum_{n \geq 1} \mu_n^2 \, a_n(L_j)^2
        \leq \frac{24 \, (1 + \mu_1^{-1})^2}{L_j^3} \, r^2 \bigl\| \nabla \psi \bigr\|_{L^2(\Omega_\varepsilon)}^2 .
    \end{equation}
    In particular,
    the series $g = \sum_{n \geq 1} (1 + \mu_n) f_n \phi_n$ converges in $L^2(\mathcal{D}_j)$,
    and $\| g \|_{L^2(\mathcal{D}_j)}^2$ is the left-hand side of \eqref{eq:tube trace coefficients}.

    Let us show that $f$ is the weak solution of the Neumann problem $- \Delta u + u = g$ in $\mathcal{D}_j$, $\partial_n u = 0$ on $\partial \mathcal{D}_j$.
    By classical theory, this problem has a unique weak solution $u \in H^1(\mathcal{D}_j)$, that verifies
    \[
        \forall v \in H^1(\mathcal{D}_j), \qquad \int_{\mathcal{D}_j} \nabla u \cdot \nabla v + \int_{\mathcal{D}_j} u \, v = \int_{\mathcal{D}_j} g \, v.
    \]
    Taking $v = \phi_n$,
    and as $\int_{\mathcal{D}_j} \nabla \phi_n \cdot \nabla u = \mu_n \int_{\mathcal{D}_j} \phi_n \, u$ by definition of the Neumann eigenpair $(\mu_n, \phi_n)$,
    one gets $(1 + \mu_n) \int_{\mathcal{D}_j} u \, \phi_n = \int_{\mathcal{D}_j} g \, \phi_n = (1 + \mu_n) f_n$ for every $n \geq 0$.
    Hence $u$ and $f$ have the same coordinates in the Hilbert basis $(\phi_n)$ of $L^2(\mathcal{D}_j)$, thus $u = f$.

    Finally, as $\mathcal{D}_j$ is smooth,
    the $H^2$ regularity of the Neumann problem \cite[Theorem 9.26]{bezis} gives $f \in H^2(\mathcal{D}_j)$ with $\| f \|_{H^2(\mathcal{D}_j)} \leq C \| g \|_{L^2(\mathcal{D}_j)}$,
    and the Sobolev embedding $H^2(\mathcal{D}_j) \subset C(\overline{\mathcal{D}_j})$, valid in dimension $2$,
    shows that $f$ is continuous on $\overline{\mathcal{D}_j}$.
    Together with \eqref{eq:tube trace coefficients}, this gives
    \[
        \| f \|_{L^\infty(\mathcal{D}_j)}
        \leq C \| f \|_{H^2(\mathcal{D}_j)}
        \leq C \| g \|_{L^2(\mathcal{D}_j)}
        \leq C r \bigl\| \nabla \psi \bigr\|_{L^2(\Omega_\varepsilon)},
    \]
    where $C$ only depends on $\mathcal{D}_j$ and $L_j$.
    By definition of $f$, this shows that $\psi$ is continuous on $\overline{\Gamma_j^\varepsilon}$,
    and gives the first inequality of \eqref{eq:dirichlet tube}.
\end{proof}

\subsection{The quasi-mode is admissible}
\label{sub:tube:proof of thm}
We can now show that \cref{hyp:quasi-mode} holds for the functions $\psi_\varepsilon^{(k)}$ defined in \eqref{eq:def quasi-mode tube},
and derive the main result of this section.

\begin{prop}\label{prop:quasi-mode tube}
    Assume that the radii satisfy
    \begin{equation}\label{eq:tube radii}
        (\reo)^3 = \mathrm{o}\Bigl( \min_{1 \leq j \leq \ndoors} r_\varepsilon^{(j)} \Bigr)
        \qquad \text{as } \varepsilon \to 0,
    \end{equation}
    that is, $\eta_\varepsilon \to 0$ with $\eta_\varepsilon$ defined in \eqref{eq:tube eta}.
    Then the functions $\psi_\varepsilon^{(k)}$ defined in \eqref{eq:def quasi-mode tube} verify \cref{hyp:quasi-mode} with
    \begin{equation}\label{eq:tube constants}
        \Kek = \frac{\Phi_\varepsilon^{(k)}}{|\Omega_\varepsilon|}
        = \frac{(\rek)^2}{|\Omega|} \frac{|\mathcal{D}_k|}{L_k} \bigl( 1 + \mathrm{O}(\reo) \bigr),
        \qquad
        C = 1,
    \end{equation}
    and
    \begin{equation}\label{eq:tube errors}
        \mathcal{E}_1(\Kall) = \frac{\Bigl( \sum_{j=1}^{\ndoors} \bigl( K_\varepsilon^{(j)} \bigr)^{1/2} \Bigr)^{3/2}}{\min_{1 \leq j \leq \ndoors} \bigl( K_\varepsilon^{(j)} \bigr)^{1/4}},
        \qquad
        \mathcal{E}_2(\Kall) = \sum_{j=1}^{\ndoors} \bigl( K_\varepsilon^{(j)} \bigr)^{1/2} .
    \end{equation}
    Furthermore,
    the mean exit time and exit position of the reflected Brownian motion $(X_t)_{t \geq 0}$ on $\Omega_\varepsilon$
    killed on the holes defined in~\eqref{eq:tube def} are given by
    \begin{equation}\label{eq:tube eigenvalue}
        \mathbb{E}_{\nu_\varepsilon}[\tau_\varepsilon]
        = |\Omega|
        \Bigl( \sum_{k=1}^{\ndoors} \frac{(\rek)^2 |\mathcal{D}_k|}{L_k} \Bigr)^{-1}
        \bigl( 1 + \mathrm{O}(\eta_\varepsilon) \bigr).
    \end{equation}
    Then the exit position is independent of the exit time, and is distributed as, for $k \in \{1, \ldots, N\}$,
    \begin{equation}\label{eq:tube exit law}
        \mathbb{P}_{\nu_\varepsilon}\bigl( \kappa_\varepsilon = k \bigr)
        = \frac{(\rek)^2 |\mathcal{D}_k| / L_k}{\sum_{j=1}^{\ndoors} (r_\varepsilon^{(j)})^2 |\mathcal{D}_j| / L_j}
        + \mathrm{O}(\eta_\varepsilon) .
    \end{equation}
\end{prop}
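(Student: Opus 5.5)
The plan is to check the three items of \cref{hyp:quasi-mode} for the functions $\psi_\varepsilon^{(k)}$ of \eqref{eq:def quasi-mode tube}, using the preceding propositions, and then to apply \cref{thm:narrow escape}. The identification of the constants is immediate. By \eqref{eq:tube neumann problem}, $\Delta \psi_\varepsilon^{(k)} = \Phi_\varepsilon^{(k)}/|\Omega_\varepsilon| = \Kek$ in $\Omega_\varepsilon$, so $C = 1$. The Neumann condition $\partial_n \psi_\varepsilon^{(k)} = 0$ holds on $\Gne = \partial\Omega_\varepsilon \setminus \Gde$, since the only nonzero flux of $\psi_\varepsilon^{(k)}$ is through $\Gke \subset \Gde$. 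The asymptotics of $\Kek$ in \eqref{eq:tube constants} follow from \eqref{eq:flux expansion}, together with $|\Omega_\varepsilon| = |\Omega| + \sum_i |\mathcal{T}_{i,\varepsilon}| = |\Omega|(1 + \mathrm{O}((\reo)^2))$ by \eqref{eq:tube uniform field energy}. In particular $\Kek \asymp (\rek)^2$ uniformly, which is the dictionary that turns $\eta_\varepsilon$ into the function $\mathcal{E}_1(\Kall)$ of \eqref{eq:tube errors}. Indeed, $\reo \asymp \sum_j (K_\varepsilon^{(j)})^{1/2}$ and $\min_j r_\varepsilon^{(j)} \asymp \min_j (K_\varepsilon^{(j)})^{1/2}$, so $\eta_\varepsilon \asymp \mathcal{E}_1(\Kall)$.

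The Dirichlet estimate and the continuity on $\overline{\Gde}$ are exactly \cref{prop:dirichlet tube}, which gives $\|\psi_\varepsilon^{(k)} - \mathbbm{1}_{\overline{\Gke}}\|_{L^\infty(\Gde)} \le C\eta_\varepsilon = \mathrm{O}(\mathcal{E}_1(\Kall))$. The standing assumption \eqref{eq:tube radii} is used only to guarantee $\eta_\varepsilon \to 0$, as \cref{hyp:quasi-mode} requires.

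For \eqref{eq:hyp:norm}, I use that $\psi_\varepsilon^{(k)}$ has zero mean on $\Omega_\varepsilon$ and apply the uniform Poincaré--Wirtinger inequality of \cref{lem:poincare tube}. Combined with \eqref{eq:tube energy psi} and \eqref{eq:flux expansion}, this gives
\[
\|\psi_\varepsilon^{(k)}\|_{L^2(\Omega_\varepsilon)} \le C_P \|\nabla \psi_\varepsilon^{(k)}\|_{L^2(\Omega_\varepsilon)} = C_P (\Phi_\varepsilon^{(k)})^{1/2} \le C\rek = \mathrm{O}\Bigl(\sum_j (K_\varepsilon^{(j)})^{1/2}\Bigr).
\]

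The eigenvalue condition \eqref{eq:hyp:eig} is the step needing an actual construction, and I treat it as in \cref{prop:narrow escape slit}. Set $\varphi^{\alpha}_\varepsilon = 1 - \alpha\sum_k \psi_\varepsilon^{(k)}$ with $\alpha = 2$. By \cref{prop:dirichlet tube}, $\varphi^{\alpha}_\varepsilon < 0$ on $\Gde$ for $\varepsilon$ small, so its positive part belongs to $H^1_{\Gde}(\Omega_\varepsilon)$. Its Dirichlet energy is at most $4N\sum_k \Phi_\varepsilon^{(k)} = \mathrm{O}((\reo)^2)$. Its $L^2$ norm squared is $|\Omega_\varepsilon| + \mathrm{O}(\reo)$, by the normalization bound above and the fact that the positive part differs from $\varphi^\alpha_\varepsilon$ only where $\alpha\sum_k\psi_\varepsilon^{(k)} > 1$, whose contribution is controlled by $\|\psi\|_{L^2}$. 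The Rayleigh quotient then gives $\lambda_\varepsilon = \mathrm{O}((\reo)^2) \to 0$. An alternative is a direct trial function that equals $1$ in $\Omega$ and decreases linearly to $0$ along each tube.

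Finally I apply \cref{thm:narrow escape}. It gives $\mathbb{E}_{\nu_\varepsilon}[\tau_\varepsilon] = (\Keo)^{-1}(1 + \mathrm{O}(\mathcal{E}_1 + \mathcal{E}_2))$. Inserting \eqref{eq:tube constants}, the relative error $\mathrm{O}(\reo)$ from $\Kek$ is absorbed into $\mathrm{O}(\eta_\varepsilon)$ since $\reo \le \eta_\varepsilon$, and $\mathcal{E}_2 \asymp \reo \le \eta_\varepsilon$ likewise. This yields \eqref{eq:tube eigenvalue}. In the same way, $\Kek/\Keo$ equals the ratio in \eqref{eq:tube exit law} up to $\mathrm{O}(\reo)$, which gives the exit law. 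The main obstacle is bookkeeping rather than analysis: making sure that every error term is dominated by $\eta_\varepsilon$.
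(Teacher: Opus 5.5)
Your proposal is correct and follows the paper's proof essentially step by step. The constants come from \eqref{eq:flux expansion} and $|\Omega_\varepsilon| = |\Omega| + \mathrm{O}\bigl((\sum_j r_\varepsilon^{(j)})^2\bigr)$, the comparability $K_\varepsilon^{(k)} \asymp (r_\varepsilon^{(k)})^2$ identifies $\mathcal{E}_1$ with $\eta_\varepsilon$, the Dirichlet and normalization conditions follow from \cref{prop:dirichlet tube} and \cref{lem:poincare tube}, and \cref{thm:narrow escape} concludes. The only difference is in \eqref{eq:hyp:eig}: the paper just asserts that the earlier techniques carry over, while you actually carry out the positive-part trial-function argument (or the simpler linear-along-the-tube trial function), and this correctly fills in that step.
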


\begin{rema}
    When the radii are comparable, i.e.\ $\min_j r_\varepsilon^{(j)} \geq c \reo$,
    then $\eta_\varepsilon \leq C \reo$, and the errors in \eqref{eq:tube eigenvalue} and \eqref{eq:tube exit law} are $\mathrm{O}(\reo)$.
    For radii $r_\varepsilon^{(j)} = \varepsilon^{\alpha_j}$,
    \eqref{eq:tube radii} holds if and only if $\max_j \alpha_j < 3 \min_j \alpha_j$,
    and $\eta_\varepsilon = \varepsilon^{(3 \min_j \alpha_j - \max_j \alpha_j)/2}$.
\end{rema}

\begin{proof}
    The asymptotics of $\Kek$ in \eqref{eq:tube constants} follow from \eqref{eq:flux expansion},
    as $|\Omega_\varepsilon| = |\Omega| + \mathrm{O}((\reo)^2)$.
    In particular, there is $c > 0$ such that $c (\rek)^2 \leq \Kek \leq c^{-1} (\rek)^2$ for all $k$,
    so that, by \eqref{eq:tube eta} and \eqref{eq:tube errors},
    $\mathcal{E}_1(\Kall)$ is bounded above and below by constant multiples of $\eta_\varepsilon$,
    and $\mathcal{E}_2(\Kall)$ by constant multiples of $\reo$.
    As $\reo \leq \eta_\varepsilon$, the errors of \cref{thm:narrow escape} are thus $\mathrm{O}(\eta_\varepsilon)$, and they vanish by \eqref{eq:tube radii}.
    Fix $k \in \{1, \ldots, N\}$ and let us check all the hypotheses in \cref{hyp:quasi-mode}.

    \paragraph{PDE.}
    By \eqref{eq:tube neumann problem} with $\Phi = \Phi_\varepsilon^{(k)}$,
    $\Delta \psi_\varepsilon^{(k)} = \Phi_\varepsilon^{(k)} / |\Omega_\varepsilon| = \Kek$ in $\Omega_\varepsilon$
    and $\partial_n \psi_\varepsilon^{(k)} = 0$ on $\Gne$.

    \paragraph{Dirichlet estimate.}
    By \cref{prop:dirichlet tube},
    $\psi_\varepsilon^{(k)}$ is continuous on $\overline{\Gde}$ and
    $\|\psi_\varepsilon^{(k)} - \mathbbm{1}_{\overline{\Gke}}\|_{L^\infty(\Gde)} \leq C \eta_\varepsilon$,
    which is the Dirichlet condition of \eqref{eq:hyp:PDE quasi-mode} with $\mathcal{E}_1$ given in \eqref{eq:tube errors}.

    \paragraph{Normalization.}
    As $\psi_\varepsilon^{(k)}$ has zero mean on $\Omega_\varepsilon$,
    \cref{lem:poincare tube}, \eqref{eq:tube energy psi} and \eqref{eq:flux expansion} give
    \[
        \|\psi_\varepsilon^{(k)}\|_{L^2(\Omega_\varepsilon)}
        \leq C \|\nabla \psi_\varepsilon^{(k)}\|_{L^2(\Omega_\varepsilon)}
        = C \sqrt{\Phi_\varepsilon^{(k)}}
        \leq C \reo,
    \]
    which is \eqref{eq:hyp:norm} with $\mathcal{E}_2$ given in \eqref{eq:tube errors}.

    \paragraph{Eigenvalue.}
    Both techniques used in \cref{lem:eigenvalue spherical holes} and in the proof of \cref{prop:narrow escape slit} apply to prove \eqref{eq:hyp:eig}, 
    with no additional complexity.

    \paragraph{Conclusion.}
    Having checked both hypotheses \eqref{eq:hyp:eig} and \eqref{eq:hyp:PDE quasi-mode}--\eqref{eq:hyp:norm} of \cref{hyp:quasi-mode},
    \cref{thm:narrow escape} holds.
\end{proof}

\section{Numerical simulations}
\label{sec:numerics}
This section has two goals:
checking the asymptotics of \cref{tab:summary} in dimension $3$, including the exit hole distribution,
and testing their natural extension to dimensions $d \geq 4$, 
for which we do not provide a proof, but which can be studied with the same methods.
The method is described in \cref{sub:algorithm},
the three-dimensional results are presented in \cref{sub:results 3d},
and the higher-dimensional ones in \cref{sub:results higher dim}.

\subsection{Method}
\label{sub:algorithm}

\paragraph{The Walk on Stars.}
Narrow escape simulations are costly, since $\tau_\varepsilon$ diverges as $\varepsilon \to 0$ \cite{Caginalp, nous}.
An Euler--Maruyama discretization of \eqref{eq:reflected bm} must have steps smaller than the holes:
it needs a time step $h \ll \varepsilon^2$, hence it requires more than $\mathbb{E}[\tau_\varepsilon]/\varepsilon^2$ steps per trajectory.
In \cite[Section 4.1]{nous},
this cost was reduced by Walk on Spheres (WoS) steps \cite{Muller1956, Sabelfeld1991} away from the boundary,
but Euler--Maruyama steps with $h = 10^{-11}$ remained necessary near the reflecting boundary.
Finite element methods, on the other hand, are impractical to mesh domains of dimension $d \geq 4$ and are computationally expensive for delicate geometries such as the tubes \eqref{eq:tube def}.
We therefore use the \textit{Walk on Stars} (WoSt) algorithm \cite{walkonstars},
which extends WoS to Neumann boundary conditions
and belongs to a recent family of grid-free Monte Carlo solvers \cite{SawhneyCrane2020, Sawhney2022, Miller2024}.
WoSt has neither time step nor mesh:
it only queries distances to the boundary and intersections of rays with it, in any dimension.

Let us give the main ideas of the \textit{Walk on Stars} algorithm, 
following \cite[Section 4]{walkonstars}.
The mean exit time $u(x) = \mathbb{E}_x[\tau_\varepsilon]$ solves
\begin{equation}\label{eq:mean exit time pde}
    \left\{
    \begin{aligned}
        -\Delta u & = 1 && \text{in } \Omega_\varepsilon,\\
        u & = 0 && \text{on } \Gde,\\
        \partial_n u & = 0 && \text{on } \Gne.
    \end{aligned}
    \right.
\end{equation}
Fix $x \in \Omega_\varepsilon$ and $R > 0$ such that the ball $B(x, R)$ does not meet $\Gde$,
and let $\mathrm{St}(x, R) = B(x, R) \cap \Omega_\varepsilon$.
We assume that $\mathrm{St}(x, R)$ is star-shaped with respect to $x$
which can be ensured by reducing $R$.
Denote by
\[
    G_R(x, y) = \frac{|y - x|^{2-d} - R^{2-d}}{(d-2)\, |\mathbb{S}^{d-1}|}
\]
the Dirichlet Green function of $B(x, R)$.
Green's formula on $\mathrm{St}(x, R)$ and \eqref{eq:mean exit time pde} give
\begin{equation}\label{eq:green representation}
    u(x) = \int_{\mathrm{St}(x, R)} G_R(x, y) \, \d y
    + \int_{\partial \mathrm{St}(x, R)} u(z) \, \bigl( -\partial_{n_z} G_R(x, z) \bigr) \, \d\sigma(z).
\end{equation}
Following the same approach as \cite{walkonstars}, we can write both integrals of \eqref{eq:green representation} 
in polar coordinates around $x$ and obtain a recursive formula for $u(x)$ that can be used for sampling:
\begin{equation}\label{eq:wost recursion}
    u(x) = \mathbb{E}_\nu \Bigl[ \tau_R\bigl(t(\nu)\bigr) + u\bigl(x + t(\nu)\, \nu\bigr) \Bigr],
    \qquad \nu \sim \mathcal{U}(\mathbb{S}^{d-1}),
\end{equation}
where
\begin{equation}\label{eq:time increment}
    \tau_R(t) = |\mathbb{S}^{d-1}| \int_0^t G_R(x, x + \rho\, \nu)\, \rho^{d-1} \, \d\rho
    = \frac{1}{d-2} \left( \frac{t^2}{2} - \frac{t^d}{d\, R^{d-2}} \right).
\end{equation}
Formula \eqref{eq:wost recursion} expresses $u(x)$ through the value of $u$ at the random point $x_1 = x + t(\nu)\, \nu$.
Applying it again at $x_1$, then at $x_2$, and so on, gives
\[
    u(x_0) = \mathbb{E}\Bigl[ \sum_{n \geq 0} \tau_{R_n}(t_n) \Bigr],
\]
where the sum runs over the steps of the random walk $(x_n)_{n \geq 0}$ until it reaches $\Gde$, where $u = 0$.
A similar representation holds for the exit hole $\kappa_\varepsilon$, 
which is the index $k$ of the hole on which it has escaped.
This is the \textit{Walk on Stars} algorithm, detailed in \cref{alg:wost}.

\begin{algorithm}
\caption{Walk on Stars for the mean exit time and the exit hole.}
\label{alg:wost}
\begin{algorithmic}[1]
\Require starting point $x_0 \in \Omega_\varepsilon$, shell width $\eta > 0$
\State $x \gets x_0$, \quad $T \gets 0$
\Loop
    \If{$d(x, \Gke) < \eta$ for some $k \in \{1, \ldots, N\}$}
        \State \Return $(T, k)$ \Comment{killed on the $k$-th hole}
    \EndIf
    \State $R \gets d(x, \Gde)$ \Comment{reduced if $\mathrm{St}(x, R)$ is not star-shaped}
    \If{$x \in \Gne$}
        \State draw $\nu$ uniformly among the inward directions at $x$
    \Else
        \State draw $\nu$ uniformly on $\mathbb{S}^{d-1}$
    \EndIf
    \State $s \gets$ distance from $x$ to $\Gne$ in the direction $\nu$
    \State $t \gets \min\{R, s\}$
    \State $T \gets T + \tau_R(t)$ \Comment{see \eqref{eq:time increment}}
    \State $x \gets x + t\, \nu$ \Comment{$x \in \Gne$ if $t = s$}
\EndLoop
\end{algorithmic}
\end{algorithm}

\paragraph{Explanation of \cref{alg:wost}.}
The walk is constructed as follows.
At the current position $x$, we take the radius $R = d(x, \Gde)$, 
so that the ball $B(x, R)$ does not intersect the killing region.
We then draw a direction $\nu$:
uniformly on the whole sphere $\mathbb{S}^{d-1}$ if $x$ lies inside $\Omega_\varepsilon$,
and uniformly on the inward half-sphere if $x \in \Gne$.
The latter follows from \eqref{eq:green representation}:
at a smooth point of $\Gne$, its left-hand side becomes $u(x)/2$, while the inward directions only cover half of the sphere,
and the two factors $1/2$ cancel
(see \cite[Footnote 1 and Section 4.4]{walkonstars}).
Next, we follow the ray from $x$ in the direction $\nu$ until it leaves $\mathrm{St}(x, R)$:
either it reaches the sphere $\partial B(x, R)$ ($t = R$),
or it hits $\Gne$ first ($t = s < R$).
The time $T$ is then increased by $\tau_R(t)$, 
the integral of $G_R(x, \cdot)$ along the segment $[x, x + t\nu]$.
If $t = R$, this is $\tau_R(R) = R^2/(2d)$, the mean exit time of $(X_t)_{t \geq 0}$ from $B(x, R)$,
and the step is a classical Walk on Spheres step \cite{Muller1956};
if $t < R$, only the part of the ball lying in $\Omega_\varepsilon$ contributes, and $\tau_R(t) < R^2/(2d)$.
We compute $\tau_R(t)$ in closed form, instead of sampling a point with density proportional to $G_R(x, \cdot)$ as in \cite{walkonstars}, which reduces the variance.
Finally, the walker moves to $x + t\nu$.

Since $t \leq d(x, \Gde)$, a step of the walk never crosses $\Gde$,
similarly to Walk on Spheres.
We therefore kill the walk as soon as it enters the shell $\{ d(\cdot, \Gke) < \eta \}$ of some hole $k$,
where $\eta$ is a small parameter, in practice $\eta = 10^{-4} \varepsilon$.
This introduces a relative bias of order $\eta/\varepsilon$ \cite{MascagniHwang2003}, well below the statistical errors
(see~\cite{BinderBraverman2012} for the influence of $\eta$ on the number of steps of Walk on Spheres).

\paragraph{Initial distribution.}
WoSt simulates no trajectory, hence cannot sample $\nu_\varepsilon$.
All walks start from the uniform distribution on $\Omega_\varepsilon$, which is a proxy for $\nu_\varepsilon$.
This is justified by the fact that the density $v_\varepsilon$ of $\nu_\varepsilon$ is close to $\varphi_\varepsilon / |\Omega_\varepsilon|$, where the quasi-mode $\varphi_\varepsilon$ is itself 
built from a constant function on $\Omega_\varepsilon$, plus a small correction $\psi_\varepsilon^{(k)}$.
In the rest of this section, 
$\mathbb{E}$ and $\mathbb{P}$ without subscript refer to this uniform initial distribution, 
while $\mathbb{E}_{\nu_\varepsilon}$ and $\mathbb{P}_{\nu_\varepsilon}$ refer to the quasi-stationary one as in the previous sections.

As a check,
starting instead from the center of the ball (Type B, $d = 3$),
which shifts $\mathbb{E}[\tau_\varepsilon]$ by an additive constant \cite[Section 4.2]{nous},
gives the same mean exit times within statistical error for $\varepsilon \leq 10^{-2}$.

\paragraph{Implementation and statistics.}
The code is written in Rust \cite{rust}, generic in the dimension $3 \leq d \leq 16$,
and parallelized over the walks with Rayon \cite{rayon}.
The domains are the unit ball and the cube $[-1, 1]^d$.
Distances and ray intersections are computed in closed form,
except for the spheroidal holes of Type C,
whose exact distance is computed by the robust bisection of \cite{Eberly2013},
accurate down to $w = 10^{-8}$.
With $n$ walks per hole size, the relative standard error on $\mathbb{E}[\tau_\varepsilon]$ is close to $n^{-1/2}$,
that is $0.16\%$ for $n = 4 \cdot 10^5$;
error bars in the figures are one standard error.
In total,
the $64$ experiments, including the validations and variations of the tube length,
comprise $563$ parameter values and $5.9 \cdot 10^7$ walks,
that is $1\,786$ core-hours on a single $32$-core node.
A table of the parameters of each experiment is given in \cref{tab:numerics}.

\begin{table}[h!]
    \centering
    \small
    \setlength{\tabcolsep}{4pt}
    \begin{tabular}{@{} c c l >{\raggedright\arraybackslash}p{6.5cm} l l @{}}
        \toprule
        Fig. & $d$ & $\Omega$ & Holes & Smallest size & Walks \\
        \midrule
        \ref{fig:time 3d} & $3$ & cube & A, B: one hole centered on the face $x_1 = 1$ & $\varepsilon = 10^{-6}$ & $4 \cdot 10^5$ \\
         & $3$ & cube & C: one slit centered on the face $x_1 = 1$ & $w = 10^{-8}$ & $2 \cdot 10^5$ \\
         & $3$ & ball & D: one tube at $e_1$ & $\varepsilon = 1.1 \cdot 10^{-3}$ & $10^5$ \\
        \addlinespace
        \ref{fig:exit 3d} & $3$ & ball & A, B: radii $\varepsilon$, $2\varepsilon$ at $\pm e_1$ & $\varepsilon = 3 \cdot 10^{-6}$ & $4 \cdot 10^5$ \\
         & $3$ & cube & C: two slits on the faces $x_1 = \pm 1$ & $w = 10^{-8}$ & $2 \cdot 10^5$ \\
         & $3$ & ball & D: two tubes of radii $\varepsilon$, $2\varepsilon$ at $\pm e_1$ & $\varepsilon = 1.6 \cdot 10^{-3}$ & $4 \cdot 10^5$ \\
        \addlinespace
        \ref{fig:hd AB} & $3$--$16$ & cube & radius $\varepsilon$ at $x_1 = 1$ & $\varepsilon = 2 \cdot 10^{-7}$ & $3 \cdot 10^4$ \\
        \ref{fig:hd slit} & $4$--$10$ & cube & half-width $w$ on the face $x_1 = 1$ & $w = 1.5 \cdot 10^{-6}$ & $3 \cdot 10^4$ \\
        \ref{fig:hd tube} & $4$--$6$ & ball & one tube of radius $\varepsilon$ at $e_1$ & $\varepsilon = 5 \cdot 10^{-3}$ & $2$--$3 \cdot 10^4$ \\
        \ref{fig:hd slit} & $4$ & cube & one planar slit on the face $x_1 = 1$ & $w = 10^{-8}$ & $2 \cdot 10^5$ \\
        \bottomrule
    \end{tabular}
    \caption{Numerical setup.
    Slits lie along $e_2$ (along $e_2$ and $e_3$ for planar slits), with half-length $\ell = 1/2$; tubes have length $L = 1$.
    Hole sizes are geometrically spaced, with $8$ to $16$ values per experiment;
    the smallest size is the minimum over all dimensions.
    The number of walks per size is reduced (down to $10^4$) for the two or three smallest sizes.
    The tubes in dimension $3$ in \cref{fig:hd tube} are those of \cref{fig:time 3d}.}
    \label{tab:numerics}
\end{table}

\subsection{Results in dimension \texorpdfstring{$3$}{3}}
\label{sub:results 3d}

\begin{figure}
    \centering
    \includegraphics[width=0.7\textwidth]{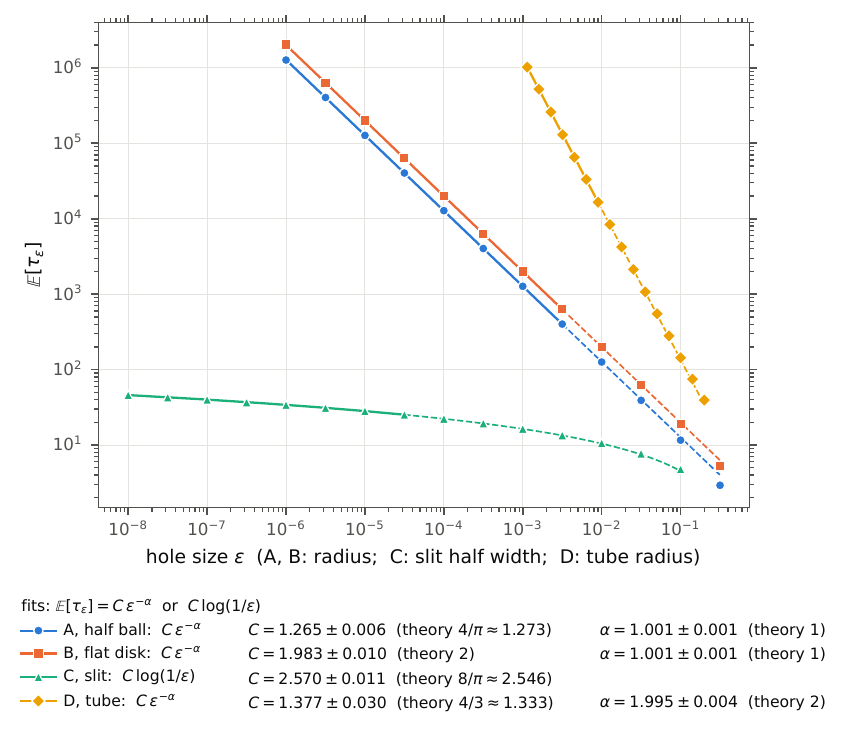}
    \caption{Mean exit time $\mathbb{E}[\tau_\varepsilon]$ for a single hole of each type in dimension $3$:
    A, B and C on a face of the cube $[-1,1]^3$ ($|\Omega| = 8$),
    D at the end of a tube of length $L = 1$ attached to the unit ball ($|\Omega| = 4\pi/3$).
    Lines: weighted least-squares fits, solid on the fitted range and dashed outside.
    Theoretical values from \cref{tab:summary}.}
    \label{fig:time 3d}
\end{figure}

\paragraph{Mean exit time.}
\Cref{fig:time 3d} shows $\mathbb{E}[\tau_\varepsilon]$ for a single hole of each type,
with fits of the form predicted by \cref{tab:summary} on the smallest holes.
The fitted exponents agree with \cref{tab:summary} within $0.3\%$,
and the prefactors within $1\%$ for A, B and C, and within $3.3\%$ ($1.5$ standard errors) for D.

\begin{figure}
    \centering
    \includegraphics[width=0.7\textwidth]{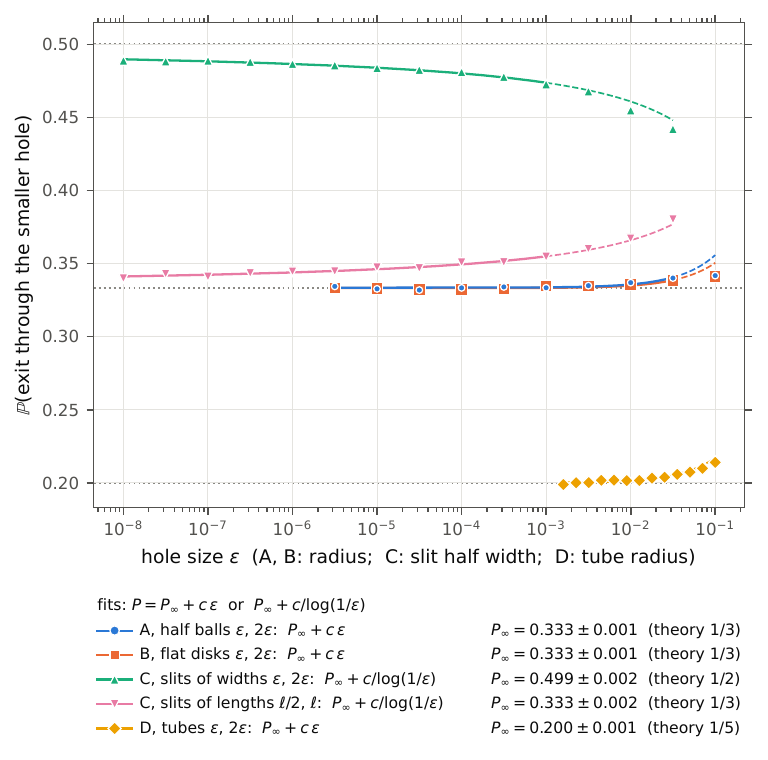}
    \caption{Probability of exiting through the smaller of two holes in dimension $3$:
    A (half-balls), B (geodesic caps) and D in the unit ball, with sizes $\varepsilon$ and $2\varepsilon$ at the poles $\pm e_1$;
    C on opposite faces of the cube, either with half-widths $w$ and $2w$ (same $\ell$),
    or with half-lengths $\ell/2$ and $\ell$ (same $w$).
    Dotted lines: limits $1/5$, $1/3$ and $1/2$.
    Lines: fits.}
    \label{fig:exit 3d}
\end{figure}

\paragraph{Exit hole.}
\Cref{fig:exit 3d} shows the probability of exiting through the smaller of two holes.
By Propositions~\ref{prop:narrow escape spherical inside}, \ref{prop:quasi-mode spherical holes} and~\ref{prop:quasi-mode tube},
it tends to $1/3$ for A and B
and to $1/5$ for D.
For slits, in the case of two slits of half-widths $w$ and $2w$, they are asymptotically equivalent ($1/2$),
whereas in the case of two slits of half-lengths $\ell/2$ and $\ell$, they are not ($1/3$).
This is the three-dimensional counterpart of the two-dimensional phenomenon of \cite[Remark 1.7]{nous}.

\subsection{Extending the results to higher dimensions}
\label{sub:results higher dim}
The constructions of \cref{sec:several narrow escape problems,sec:slit,sec:tubular holes} have formal analogues in any dimension $d \geq 4$.
The expected scalings are given in \cref{tab:capacities}.
Only for~\eqref{eq:spherical holes} have these results been proved, in \cite[Theorems 1.4 and 1.6]{nous}. 
\Cref{fig:hd AB,fig:hd slit,fig:hd tube} 
illustrate that the scalings of \cref{tab:capacities} are verified up to $d = 16$.
In \cref{fig:hd tube}, the simulations are stopped at $d=6$ as 
the \textit{Walk on Stars} algorithm is not efficient for non-convex geometries. 
Around the connection with the tube, 
$R$ has to be small so that $\mathrm{St}(x,R)$ stays star-shaped,
hence the number of steps of the walk is much higher than in the other geometries.

\begin{table}[h!]
    \centering
    \small
    \renewcommand{\arraystretch}{1.5}
    \begin{tabular}{@{} l l c @{}}
        \toprule
        Type & Limit set & $\mathbb{E}[\tau_\varepsilon] \propto$ \\
        \midrule
        A & point & $\varepsilon^{-(d-2)}$ \\
        B & point & $\varepsilon^{-(d-2)}$ \\
        C, $d \geq 4$ & segment & $w^{-(d-3)}$ \\
        C, planar, $d = 4$ & disk & $\log(1/w)$ \\
        D & --- & $\varepsilon^{-(d-1)}$ \\
        \bottomrule
    \end{tabular}
    \caption{Leading-order scaling of the mean exit time in dimension $d$.}
    \label{tab:capacities}
\end{table}

\begin{figure}
    \centering
    \includegraphics[width=0.8\textwidth]{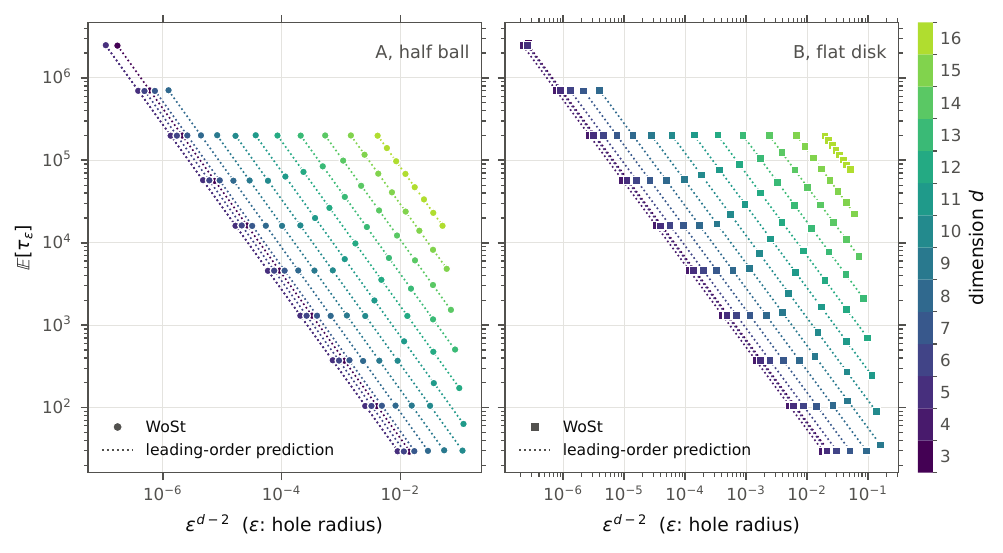}
    \caption{Types A (left) and B (right) in the cube $[-1,1]^d$, $3 \leq d \leq 16$:
    $\mathbb{E}[\tau_\varepsilon]$ against $\varepsilon^{d-2}$. Dotted: prediction.}
    \label{fig:hd AB}
\end{figure}

\begin{figure}
    \centering
    \begin{subfigure}{0.49\textwidth}
        \includegraphics[width=\textwidth]{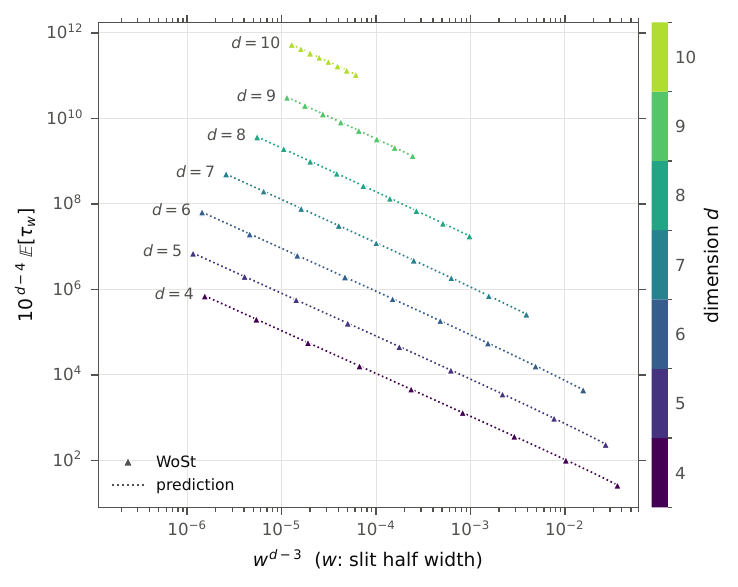}
    \end{subfigure}
    \hfill
    \begin{subfigure}{0.49\textwidth}
        \includegraphics[width=\textwidth]{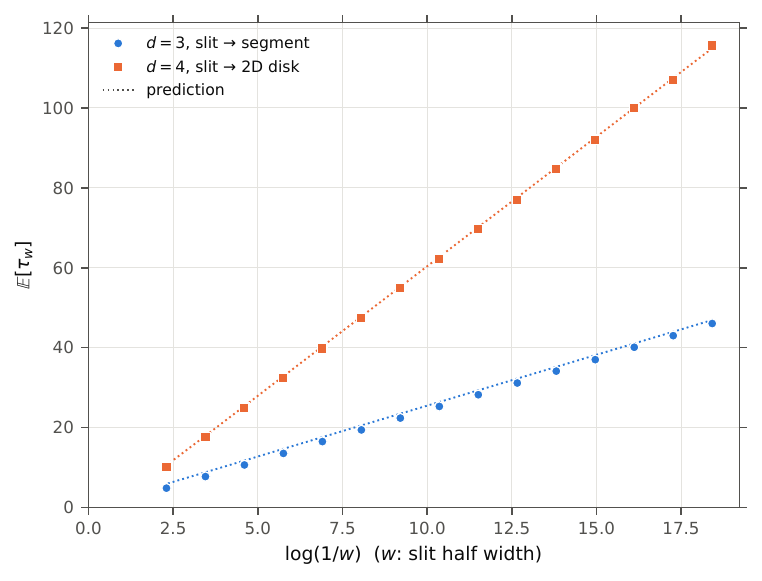}
    \end{subfigure}
    \caption{Type C.
    Left: line slits in the cube $[-1,1]^d$, $4 \leq d \leq 10$, $\mathbb{E}[\tau_w]$ against $w^{d-3}$ (curves offset by $10^{d-4}$).
    Right: holes shrinking to a set of codimension $2$, a segment in $d = 3$ and a disk in $d = 4$, $\mathbb{E}[\tau_w]$ against $\log(1/w)$.
    Dotted: prediction.}
    \label{fig:hd slit}
\end{figure}

\begin{figure}
    \centering
    \includegraphics[width=0.6\textwidth]{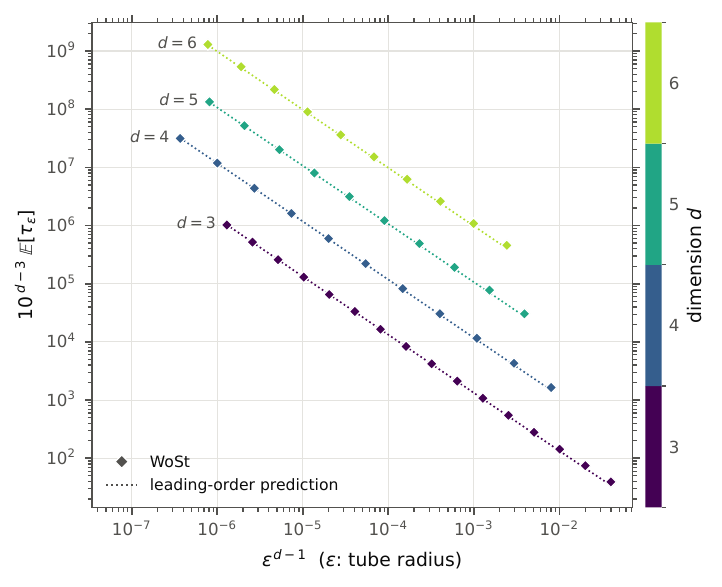}
    \caption{Type D: tube of radius $\varepsilon$ and length $L = 1$ attached to the unit ball of $\real^d$, $3 \leq d \leq 6$,
    $\mathbb{E}[\tau_\varepsilon]$ against $\varepsilon^{d-1}$ (curves offset by $10^{d-3}$). Dotted: prediction.}
    \label{fig:hd tube}
\end{figure}

\clearpage

\section*{Acknowledgments and AI statement}
Part of this work is funded by the Agence
Nationale de la Recherche through the projects IPSO (ANR-23-CE40-0027) and DySLoS (ANR-25-CE40-6875).
The author has received funding from the European Research Council (ERC) 
under the European Union’s Horizon 2020 research and innovation programme (grant agreement No 810367), project EMC2.

LLMs (Claude Opus 5.5) have been used to improve the general writing and presentation of the manuscript and correct typographical errors. 
Through this process a simpler yet more elegant proof of the results presented in \cref{sub:definition of the quasi-mode} was suggested by the LLM. 
The tikz figures were generated using Claude Opus. 
The author has verified the correctness of the results and proofs presented in this manuscript.

\appendix
\section{On the \texorpdfstring{$\varepsilon$}{epsilon}-uniform Poincaré--Wirtinger inequality}
\label{app:poincare tube}

In this section, we prove that the Poincaré--Wirtinger inequality holds for all geometries considered in this work with a constant
independent of $\varepsilon$.
For the \eqref{eq:spherical holes}, \eqref{eq:geodesic spherical holes} and \eqref{eq:slit def} geometries, 
this is a classical result, see~\cite[Theorem~1.2]{Ruiz},
as the domains are bounded and satisfy the uniform cone condition.
This is not the case for the tubular holes geometry \eqref{eq:tube def}.
However, the uniformity of the constant in the Poincaré--Wirtinger inequality on domains with thin channels has been studied in the literature (see~\cite{Arrieta1995, JimboKosugi2009}).
In particular, the convergence of the Neumann spectrum of the operator defined on~$\Omega_\varepsilon$ to the one of $\Omega$
by \cite{Arrieta1995} would be sufficient to prove the uniform Poincaré--Wirtinger inequality, 
and it would lead to a constant $\max\{\mu_2(\Omega)^{-1/2}, 2\ell/\pi\}$, where $\ell$ is the length of the longest tube and 
$\mu_2(\Omega)$ is the first non-zero Neumann eigenvalue of $\Omega$.
However, the author of~\cite{Arrieta1995} assume that the domains are smooth, which is not the case for $\Omega_\varepsilon$ for the holes geometry considered in~\cref{sec:tubular holes}.
We therefore provide a self-contained proof of the uniform Poincaré--Wirtinger inequality
in this setting with a constant independent of $\varepsilon$.

\begin{lem}\label{lem:poincare tube}
    Consider the geometry of \cref{sec:tubular holes}, introduced in~\eqref{eq:tube def} with the domains $\Omega_\varepsilon$ defined in~\eqref{eq:tube domain}.
    There is $C > 0$, independent of $\varepsilon$, such that for any
    $u \in H^1(\Omega_\varepsilon)$, it holds
    \[
        \left\|u - \frac{1}{|\Omega_\varepsilon|}\int_{\Omega_\varepsilon} u\right\|_{L^2(\Omega_\varepsilon)}
        \leq C \|\nabla u\|_{L^2(\Omega_\varepsilon)}.
    \]
\end{lem}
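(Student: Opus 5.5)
We reduce the statement to a variational bound: since the mean minimizes $c \mapsto \|u - c\|_{L^2(\Omega_\varepsilon)}$, it suffices to find, for each $u \in H^1(\Omega_\varepsilon)$, a constant $c$ with $\|u - c\|_{L^2(\Omega_\varepsilon)} \leq C \|\nabla u\|_{L^2(\Omega_\varepsilon)}$. We choose $c = \bar u_\Omega = |\Omega|^{-1}\int_\Omega u$, the mean over the fixed bulk $\Omega$. On $\Omega$ itself, the classical Poincaré--Wirtinger inequality for the fixed smooth domain gives $\|u - \bar u_\Omega\|_{L^2(\Omega)} \leq C_\Omega \|\nabla u\|_{L^2(\Omega)}$, with $C_\Omega$ independent of $\varepsilon$. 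It then remains to control $\|u - \bar u_\Omega\|_{L^2(\Tke)}$ for each tube $k$, uniformly in $\varepsilon$.

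Fix $k$ and use the coordinates $(\rho, s)$ of \eqref{eq:tube lim}, so that $\Tke = \{\rho \in \Dke, \, h_k(\rho) < s < L_k\}$. For almost every $\rho$, the map $s \mapsto u(\rho, s)$ is absolutely continuous, as in the proof of \cref{prop:dirichlet tube}. Hence, for $h_k(\rho) < s < L_k$,
\[
u(\rho,s) = u(\rho, h_k(\rho)) + \int_{h_k(\rho)}^s \partial_s u(\rho,t)\,\d t,
\]
where $u(\rho, h_k(\rho))$ is the trace on the mouth $\Ske$. By Cauchy--Schwarz and the bound $L_k - h_k \leq L_k + \ske \leq C$,
\[
\int_{\Tke} |u - \bar u_\Omega|^2 \leq 2C\int_{\Dke} |u(\rho,h_k(\rho)) - \bar u_\Omega|^2 \,\d\rho + 2C^2 \|\partial_s u\|_{L^2(\Tke)}^2 .
\]
The second term is already of the required form. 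Since $(n_k\cdot n)\,\d\sigma = \d\rho$ on $\Ske$ and $\d\sigma \leq 2\,\d\rho$, the first term is bounded by $C\|u - \bar u_\Omega\|_{L^2(\Ske)}^2 \leq C\|u - \bar u_\Omega\|_{L^2(\partial\Omega)}^2$. Here the trace is taken from the $\Omega$ side, which coincides with the trace from the tube side because $u \in H^1$ across the mouth.

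Now apply the trace theorem on the fixed domain $\Omega$: $\|w\|_{L^2(\partial\Omega)} \leq C_{\mathrm{tr}}\|w\|_{H^1(\Omega)}$ with $w = u - \bar u_\Omega$. Combined with Poincaré--Wirtinger on $\Omega$, this gives $\|u - \bar u_\Omega\|_{L^2(\partial\Omega)} \leq C\|\nabla u\|_{L^2(\Omega)}$. Summing over the $N$ tubes and adding the bulk estimate then yields $\|u - \bar u_\Omega\|_{L^2(\Omega_\varepsilon)}^2 \leq C\|\nabla u\|_{L^2(\Omega_\varepsilon)}^2$, with $C$ depending only on $\Omega$, the $L_k$ and $N$. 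Passing from $\bar u_\Omega$ to the true mean $|\Omega_\varepsilon|^{-1}\int_{\Omega_\varepsilon} u$ can only decrease the left-hand side, which proves the lemma.

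The main obstacle is the justification of the one-dimensional fundamental theorem of calculus up to the curved mouth $s = h_k(\rho)$, and the identification of the tube-side trace there with the $\Omega$-side trace, since $\Omega_\varepsilon$ is only Lipschitz near $\partial\Ske$. We would handle this by density. Smooth functions on $\overline{\Omega_\varepsilon}$ (or piecewise $H^1$ functions glued across $\Ske$) are dense in $H^1(\Omega_\varepsilon)$, because $\Omega_\varepsilon$ is a finite union of Lipschitz pieces. We would prove the inequality for such $u$, where every step is classical, and then pass to the limit, since both sides are continuous in the $H^1$ norm. The fact that the trace constant of the fixed $\Omega$ is used, rather than one on the thin tube, is what keeps everything uniform in $\varepsilon$.
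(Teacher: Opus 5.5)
Your proposal is correct and follows essentially the same route as the paper: compare $u$ with its mean $\bar u_\Omega$ over the fixed bulk, apply Poincaré--Wirtinger and the trace theorem on $\Omega$, and integrate $\partial_s u$ along each tube starting from its mouth $\Ske$, with a density argument for the regularity. The differences are minor: the paper uses density only on each Lipschitz tube $\Tke$, while your global density on $\overline{\Omega_\varepsilon}$ also makes explicit why the tube-side and bulk-side traces on the mouth agree; and the inequality you actually need on the mouth is $\d\rho \leq \d\sigma$, which follows from $(n_k\cdot n)\,\d\sigma = \d\rho$, not from $\d\sigma \leq 2\,\d\rho$.
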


\begin{proof}
    Fix $u \in H^1(\Omega_\varepsilon)$ and denote $\bar u = \frac{1}{|\Omega|}\int_\Omega u$. 
    Since the mean over
    $\Omega_\varepsilon$ minimizes $k \mapsto \|u - k\|_{L^2(\Omega_\varepsilon)}$
    over constants, it suffices to bound $\|u - \bar{u}\|_{L^2(\Omega_\varepsilon)}$.
    Since $\Tke$ are pairwise disjoint, 
    by the definition of $\Omega_\varepsilon$ in~\eqref{eq:tube domain}, we have
    \begin{equation}\label{eq:poincare split}
        \|v\|_{L^2(\Omega_\varepsilon)}^2
        = \|v\|_{L^2(\Omega)}^2 + \sum_{k=1}^{\ndoors}\|v\|_{L^2(\Tke)}^2
        \qquad (v \in L^2(\Omega_\varepsilon)),
    \end{equation}
    and the same identity holds for $\nabla u$. We bound each term separately.

    \paragraph{Step 1 (the fixed domain).}
    Since $\Omega$ is bounded, connected, Lipschitz and independent of
    $\varepsilon$, there is a constant $C_P > 0$ such that the Poincaré--Wirtinger inequality holds:
    \begin{equation}\label{eq:poincare fixed}
        \|u - \bar u\|_{L^2(\Omega)} \leq C_P \|\nabla u\|_{L^2(\Omega)}.
    \end{equation}
    For the same reason, there is a constant $C_{\mathrm{tr}} > 0$ such that the trace theorem holds, 
    for any $v \in H^1(\Omega)$,
    \begin{equation}\label{eq:trace fixed}
        \|v\|_{L^2(\partial\Omega)}^2
        \leq C_{\mathrm{tr}} \bigl( \|v\|_{L^2(\Omega)}^2 + \|\nabla v\|_{L^2(\Omega)}^2 \bigr),
    \end{equation}
    which is useful in the next step.

    \paragraph{Step 2 (the tubes).}
    Fix $k \in \{1, \ldots, \ndoors\}$ and denote $\ell_k = L_k + \sup_{\varepsilon} \ske$ 
    the maximal length of the $k$-th tube. We claim that
    \begin{equation}\label{eq:poincare tube}
        \|u - \bar u\|_{L^2(\Tke)}^2
        \leq 2\ell_k \, \|u - \bar u\|_{L^2(\Ske)}^2
        + \ell_k^2 \, \|\partial_s u\|_{L^2(\Tke)}^2 ,
    \end{equation}
    where $\partial_s u = \nabla u \cdot n_k$. 

    For fixed $\varepsilon$, $\Tke$ is a Lipschitz domain. Hence functions in
    $C^1(\overline{\Tke})$ are dense in $H^1(\Tke)$, and both sides
    of~\eqref{eq:poincare tube} are continuous, by
    the trace theorem on $\Tke$. It is therefore enough to
    prove~\eqref{eq:poincare tube} with~$u - \bar u$ replaced by some
    $w \in C^1(\overline{\Tke})$. We use the coordinates
    $x = \xk + \rho + s\,n_k$ of~\eqref{eq:tube lim}, and 
    write~$w(\rho, s) = w(\xk + \rho + s\,n_k)$. For $\rho \in \Dke$ and
    $s \in (h_k(\rho), L_k)$, the fundamental theorem of calculus gives
    \[
        w(\rho, s) = w\bigl(\rho, h_k(\rho)\bigr)
        + \int_{h_k(\rho)}^{s} \partial_s w(\rho, \sigma)\,\d\sigma .
    \]
    By $(a+b)^2 \leq 2a^2 + 2b^2$ and the Cauchy--Schwarz inequality,
    \[
        w(\rho, s)^2 \leq 2\, w\bigl(\rho, h_k(\rho)\bigr)^2
        + 2\bigl(s - h_k(\rho)\bigr) \int_{h_k(\rho)}^{L_k} |\partial_s w(\rho, \sigma)|^2 \,\d\sigma .
    \]
    We integrate in $s$ over $(h_k(\rho), L_k)$, an interval of length at most
    $\ell_k$, and use $\int_{h_k(\rho)}^{L_k} 2(s - h_k(\rho))\,\d s
    = (L_k - h_k(\rho))^2 \leq \ell_k^2$. Then we integrate in $\rho$ over $\Dke$.
    This yields
    \[
        \|w\|_{L^2(\Tke)}^2
        \leq 2\ell_k \int_{\Dke} w\bigl(\rho, h_k(\rho)\bigr)^2 \,\d\rho
        + \ell_k^2 \|\partial_s w\|_{L^2(\Tke)}^2 .
    \]
    Finally, $\Ske$ is the graph of
    $\rho \mapsto \xk + \rho + h_k(\rho)\,n_k$ over $\Dke$, so its surface element
    is
    \[
        \d \sigma = \sqrt{1 + |\nabla h_k(\rho)|^2}\,\d\rho \geq \d\rho .
    \]
    Therefore
    $\int_{\Dke} w(\rho, h_k(\rho))^2\,\d\rho \leq \|w\|_{L^2(\Ske)}^2$,
    which proves~\eqref{eq:poincare tube}.

    \paragraph{Step 3 (conclusion).}
    From their definition~\eqref{eq:def tube mouth}, the mouths $\Ske$ are pairwise
    disjoint subsets of $\partial\Omega$, so that
    $\sum_{k} \|u - \bar u\|_{L^2(\Ske)}^2 \leq \|u - \bar u\|_{L^2(\partial\Omega)}^2$.
    We sum~\eqref{eq:poincare tube} over $k$ and use $|\partial_s u| \leq |\nabla u|$
    together with~\eqref{eq:trace fixed} and~\eqref{eq:poincare fixed}. This gives
    \begin{equation}\label{eq:poincare poincare fixed}
        \sum_{k=1}^{\ndoors} \|u - \bar u\|_{L^2(\Tke)}^2
        \leq 2\ell\, C_{\mathrm{tr}} \bigl(1 + C_P^2\bigr) \|\nabla u\|_{L^2(\Omega)}^2
        + \ell^2 \sum_{k=1}^{\ndoors} \|\nabla u\|_{L^2(\Tke)}^2,
    \end{equation}
    where $\ell = \max_k \ell_k$ is the maximal length of all the tubes.
    Finally, we combine~\eqref{eq:poincare fixed} and~\eqref{eq:poincare poincare fixed} with~\eqref{eq:poincare split} to obtain
    \[
        \|u - \bar u\|_{L^2(\Omega_\varepsilon)}^2 \leq K \|\nabla u\|_{L^2(\Omega_\varepsilon)}^2,
        \qquad
        K = \max\Bigl\{ C_P^2 + 2\ell\, C_{\mathrm{tr}} \bigl(1 + C_P^2\bigr),\ \ell^2 \Bigr\} .
    \]
    The lemma holds with $C = K^{1/2}$.
\end{proof}

\section{Proof of \texorpdfstring{\cref{thm:narrow escape}}{the main theorem}}
\label{app:proof}
We provide here the proof of \cref{thm:narrow escape} in the general case,
following the steps made in \cite[Section 3]{nous}.
Throughout this section,
$(\cdot, \cdot)_{\Omega_\varepsilon}$ denotes the $L^2(\Omega_\varepsilon)$ scalar product,
$\langle \cdot, \cdot \rangle$ denotes the duality bracket between $H^{-\frac{1}{2}}(\partial \Omega_\varepsilon)$ and $H^{\frac{1}{2}}(\partial \Omega_\varepsilon)$,
and functions of $H^1(\Omega_\varepsilon)$ are identified with their traces on $\partial \Omega_\varepsilon$.

Let us first give a meaning to the normal derivatives introduced in \eqref{eq:domain L} and \eqref{eq:hyp:PDE quasi-mode}.
For $u \in H^1(\Omega_\varepsilon)$ such that $\Delta u \in L^2(\Omega_\varepsilon)$, we define the normal derivative $\partial_n u$
as an element of $H^{-\frac{1}{2}}(\partial \Omega_\varepsilon)$ by,
for all~$v \in H^{\frac{1}{2}}(\partial \Omega_\varepsilon)$,
\begin{equation}\label{eq:normal derivative}
    \langle \partial_n u , v \rangle
    = \int_{\Omega_\varepsilon} \nabla u \cdot \nabla w + \left(\Delta u\right)w,
\end{equation}
where $w$ is any extension of $v$ in $H^1(\Omega_\varepsilon)$, such that $\operatorname{tr}_{\partial \Omega_\varepsilon} w = v$.
Notice that the right-hand side of \eqref{eq:normal derivative} is null for zero trace functions,
hence the definition is indeed independent of the choice of $w$.
The Neumann condition $\partial_n u = 0$ on $\Gne$ is then understood as $\langle \partial_n u, v \rangle = 0$
for all $v \in H^{\frac{1}{2}}(\partial \Omega_\varepsilon)$ vanishing on $\Gde$.
Applying \eqref{eq:normal derivative} twice,
the gradient terms cancel and one obtains the Green formula:
for all $u_1, u_2 \in H^1(\Omega_\varepsilon)$ such that $\Delta u_1, \Delta u_2 \in L^2(\Omega_\varepsilon)$,
\begin{equation}\label{eq:green formula}
    \langle \partial_n u_1, u_2 \rangle - \langle \partial_n u_2, u_1 \rangle
    = \bigl( \Delta u_1, u_2 \bigr)_{\Omega_\varepsilon} - \bigl( u_1, \Delta u_2 \bigr)_{\Omega_\varepsilon}.
\end{equation}
In particular,
if $u_1 = 0$ on $\Gde$ and $\partial_n u_2 = 0$ on $\Gne$,
then $\langle \partial_n u_2, u_1 \rangle = 0$.

We now provide a technical lemma.
\begin{lem}\label{lem:quasi-mode scalar products}
    Under \cref{hyp:quasi-mode},
    consider the quasi-mode $\varphi_\varepsilon$ defined in \eqref{eq:def quasi-mode}
    and the density of the quasi-stationary distribution $v_\varepsilon$ defined in \eqref{eq:edp density}.
    Then, the following estimates hold
    \begin{equation}\label{eq:bound scalar}
        \bigl( v_\varepsilon, \varphi_\varepsilon \bigr)_{\Omega_\varepsilon} = 1 + \mathrm{O}(\mathcal{E}_2(\Kall)),
    \end{equation}
    and
    \begin{equation}\label{eq:bound normal}
        \langle \partial_n v_\varepsilon , \varphi_\varepsilon \rangle = \mathrm{O}(\lambda_\varepsilon \mathcal{E}_1(\Kall)).
    \end{equation}
\end{lem}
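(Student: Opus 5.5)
The plan is to prove both estimates by using the weak formulation of \eqref{eq:edp density} with suitable test functions. The only extra ingredients are the uniform Poincaré--Wirtinger inequality (\cref{lem:poincare tube}, and its analogue from \cite{Ruiz} for the other geometries) and the sign of $\partial_n v_\varepsilon$ on $\Gde$.

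\textbf{First estimate \eqref{eq:bound scalar}.} Write $(v_\varepsilon,\varphi_\varepsilon)_{\Omega_\varepsilon} = \int_{\Omega_\varepsilon} v_\varepsilon + (v_\varepsilon,\varphi_\varepsilon-1)_{\Omega_\varepsilon}$. The first term equals $1$ by normalization. By Cauchy--Schwarz and \eqref{eq:quasi-mode properties}, the second is bounded by $\|v_\varepsilon\|_{L^2}\,\mathrm{O}(\mathcal{E}_2(\Kall))$. It therefore suffices to bound $\|v_\varepsilon\|_{L^2(\Omega_\varepsilon)}$ uniformly in $\varepsilon$.

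Set $m_\varepsilon = |\Omega_\varepsilon|^{-1}$, which is the mean of $v_\varepsilon$. Testing \eqref{eq:edp density} against $v_\varepsilon$ gives $\|\nabla v_\varepsilon\|_{L^2}^2 = \lambda_\varepsilon \|v_\varepsilon\|_{L^2}^2$. The uniform Poincaré--Wirtinger inequality then yields
\[
\|v_\varepsilon\|_{L^2} \le \|v_\varepsilon - m_\varepsilon\|_{L^2} + m_\varepsilon |\Omega_\varepsilon|^{1/2} \le C\sqrt{\lambda_\varepsilon}\,\|v_\varepsilon\|_{L^2} + |\Omega_\varepsilon|^{-1/2}.
\]
By \eqref{eq:hyp:eig}, $C\sqrt{\lambda_\varepsilon}\le 1/2$ for $\varepsilon$ small. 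This gives $\|v_\varepsilon\|_{L^2} \le 2|\Omega_\varepsilon|^{-1/2}$, which is bounded since $|\Omega_\varepsilon|\to|\Omega|$. This settles \eqref{eq:bound scalar}.

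\textbf{Second estimate \eqref{eq:bound normal}.} Since $\partial_n v_\varepsilon=0$ on $\Gne$, the pairing $\langle\partial_n v_\varepsilon,\cdot\rangle$ only sees traces on $\Gde$. The strategy is to show that $\partial_n v_\varepsilon$ is a nonpositive distribution supported on $\overline{\Gde}$. It is then a Radon measure of total mass $|\langle\partial_n v_\varepsilon,1\rangle|$. Taking $w\equiv 1$ in \eqref{eq:normal derivative} gives $\langle\partial_n v_\varepsilon,1\rangle = \int\Delta v_\varepsilon = -\lambda_\varepsilon$. Since $\varphi_\varepsilon$ is continuous on $\overline{\Gde}$ (by the $C^0(\overline{\Gde})$ requirement in \cref{hyp:quasi-mode}), we obtain
\[
|\langle\partial_n v_\varepsilon,\varphi_\varepsilon\rangle| \le \lambda_\varepsilon\,\|\varphi_\varepsilon\|_{L^\infty(\Gde)} = \mathrm{O}(\lambda_\varepsilon\mathcal{E}_1(\Kall)).
\]
Care is needed to check that this measure-pairing coincides with the $H^{-1/2}$--$H^{1/2}$ pairing. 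I would handle this by approximating $\varphi_\varepsilon|_{\Gde}$ uniformly by smooth functions, which is legitimate once positivity is known.

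\textbf{Main obstacle: the sign of $\partial_n v_\varepsilon$.} For $w\ge 0$ in $H^1(\Omega_\varepsilon)$ and $\delta>0$, I would test with $w_\delta = w\,(1-v_\varepsilon/\delta)_+$. Because $v_\varepsilon = 0$ on $\Gde$, $w_\delta$ has the same trace as $w$ there. On $\Gne$ the traces differ, but this does not affect the pairing since $\partial_n v_\varepsilon=0$ on $\Gne$. Hence
\[
\langle\partial_n v_\varepsilon,w\rangle = \int (1-v_\varepsilon/\delta)_+\,\nabla v_\varepsilon\cdot\nabla w \;-\; \frac1\delta\int_{\{v_\varepsilon<\delta\}} w|\nabla v_\varepsilon|^2 \;-\; \lambda_\varepsilon\int v_\varepsilon w_\delta .
\]
The middle term is nonpositive. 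The first and last terms tend to $0$ as $\delta\to0$ by dominated convergence, because $v_\varepsilon>0$ in $\Omega_\varepsilon$ (see \cite[Theorem 8.38]{Gilbrag}), so $(1-v_\varepsilon/\delta)_+\to 0$ almost everywhere. This gives $\langle\partial_n v_\varepsilon,w\rangle\le 0$. The delicate point is justifying this truncation argument in $H^1$ (chain rule for $(\cdot)_+$) and the identification of the pairing with a measure; I expect the rest to be routine.
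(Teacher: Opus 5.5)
Your first estimate is the paper's argument. The paper uses the identity $\|v_\varepsilon-|\Omega_\varepsilon|^{-1}\|_{L^2}^2=\|v_\varepsilon\|_{L^2}^2-|\Omega_\varepsilon|^{-1}$ where you use the triangle inequality, which is an immaterial difference.

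For the second estimate you take a genuinely different route. The paper does not prove the sign of $\partial_n v_\varepsilon$ at all. It imports from \cite[Proposition 1.3]{nous} the probabilistic fact that $-\lambda_\varepsilon^{-1}\partial_n v_\varepsilon$ is the law of $X_{\tau_\varepsilon}$. That gives at once a nonpositive Radon measure of mass $\lambda_\varepsilon$ on $\overline{\Gde}$, against which the paper integrates $\varphi_\varepsilon$, tacitly identifying this integral with the $H^{-1/2}$--$H^{1/2}$ pairing. Your truncation with $w(1-v_\varepsilon/\delta)_+$, together with $\langle\partial_n v_\varepsilon,1\rangle=-\lambda_\varepsilon$, replaces that citation by a self-contained PDE argument. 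It is correct, and it does not rely on the probabilistic result being available in each of the four geometries, for instance the non-smooth tube domains.

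Two points need tightening.

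First, $w_\delta\in H^1(\Omega_\varepsilon)$ is only clear for bounded $w$. So prove the sign for $0\le w\in H^1\cap L^\infty$, then pass to all $0\le w\in H^1$ using $\min(w,M)\to w$ in $H^1$. This works because $w\mapsto\int\nabla v_\varepsilon\cdot\nabla w-\lambda_\varepsilon\int v_\varepsilon w$ is $H^1$-continuous.

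Second, the identification step you flag is easier than uniform approximation. Set $\eta=\|\varphi_\varepsilon\|_{L^\infty(\Gde)}$.
\begin{itemize}
\item The functions $(\eta\pm\varphi_\varepsilon)_+$ are nonnegative and lie in $H^1(\Omega_\varepsilon)$.
\item They differ from $\eta\pm\varphi_\varepsilon$ by $(\eta\pm\varphi_\varepsilon)_-$, whose trace vanishes on $\Gde$. The pairing with this difference is zero by the weak eigenvalue equation on $H^1_{\Gde}(\Omega_\varepsilon)$.
\item Your sign result then gives $-\eta\lambda_\varepsilon\pm\langle\partial_n v_\varepsilon,\varphi_\varepsilon\rangle\le 0$, that is, $|\langle\partial_n v_\varepsilon,\varphi_\varepsilon\rangle|\le\lambda_\varepsilon\eta$.
\end{itemize}
This is precisely what makes your remark that approximation is ``legitimate once positivity is known'' true. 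It also shows that, at least for this lemma, your route needs neither the Radon-measure representation nor the $C^0(\overline{\Gde})$ requirement of the hypothesis, only the $L^\infty(\Gde)$ bound.
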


\begin{proof}
    We start by showing that the $L^2$ norm of $v_\varepsilon$ is bounded uniformly in $\varepsilon$.
    By the uniform in $\varepsilon$ Poincaré--Wirtinger inequality on $\Omega_\varepsilon$
    (see \cite[Theorem 1.2]{Ruiz} for \eqref{eq:spherical holes}--\eqref{eq:slit def} and \cref{lem:poincare tube} for \eqref{eq:tube def}),
    there exists $C_{\mathrm{PW}} > 0$ independent of $\varepsilon$ such that
    \begin{align}\label{pw}
        \left\|v_\varepsilon-\frac{1}{|\Omega_\varepsilon|}\left( v_\varepsilon  , 1\right)_{\Omega_\varepsilon}\right\|_{L^2(\Omega_\varepsilon)}^2
        \leq C_{\mathrm{PW}} \|\nabla v_\varepsilon\|_{L^2(\Omega_\varepsilon)}^2 = C_{\mathrm{PW}} \lambda_\varepsilon \| v_\varepsilon\|_{L^2(\Omega_\varepsilon)}^2,
    \end{align}
    where we used \eqref{eq:edp density} to obtain the equality.
    Since $\left\lVert v_\varepsilon \right\rVert_{L^1(\Omega_\varepsilon)} = 1$, the left-hand side satisfies
    \begin{align}\label{lhs}
        \left\|v_\varepsilon-\frac{1}{|\Omega_\varepsilon|}\left( v_\varepsilon  , 1\right)_{\Omega_\varepsilon}\right\|_{L^2(\Omega_\varepsilon)}^2 = \|v_\varepsilon\|_{L^2(\Omega_\varepsilon)}^2-\frac{1}{|\Omega_\varepsilon|} .
    \end{align}
    Plugging \eqref{pw} into
    \eqref{lhs},
    one obtains
    \begin{align}\label{mino}
        \|v_\varepsilon\|_{L^2(\Omega_\varepsilon)}^2 (1 - C_{\mathrm{PW}} \lambda_\varepsilon)
        \leq  |\Omega_\varepsilon|^{-1}.
    \end{align}
    Since, for all kinds of holes it holds
    \[
        |\Omega| = |\Omega_\varepsilon| + \mathrm{o}\left( 1\right),
    \]
    and since $1 - C_{\mathrm{PW}} \lambda_\varepsilon \geq \frac{1}{2}$ for $\varepsilon$ sufficiently small by
    \eqref{eq:hyp:eig},
    the inequality \eqref{mino} leads to
    \begin{equation} \label{eq:bound u_0}
        \|v_\varepsilon\|_{L^2(\Omega_\varepsilon)} \leq M,
    \end{equation}
    for some constant $M>0$, independent of $\varepsilon$.

    Let us now move on to the estimates claimed by this lemma.
    From the Cauchy--Schwarz inequality, \eqref{eq:bound u_0},
    the~$L^1$ normalization of $v_\varepsilon$
    and \eqref{eq:quasi-mode properties}, we have
    \[
        \left| \bigl( v_\varepsilon, \varphi_\varepsilon \bigr)_{\Omega_\varepsilon} - 1\right|
        = \left| \bigl( v_\varepsilon, \varphi_\varepsilon - 1 \bigr)_{\Omega_\varepsilon}\right|
        \leq \left\| v_\varepsilon \right\|_{L^2(\Omega_\varepsilon)} \left\| \varphi_\varepsilon  - 1\right\|_{L^2(\Omega_\varepsilon)}
        = \mathrm{O}(\mathcal{E}_2(\Kall)).
    \]

    For the normal derivative,
    recall from \cite[Proposition 1.3]{nous} that $-\lambda_\varepsilon^{-1} \partial_n v_\varepsilon$ is the law of $X_{\tau_\varepsilon}$.
    Hence,
    $\partial_n v_\varepsilon$ can be identified with a nonpositive Radon measure supported on $\overline{\Gde}$,
    of total mass $\left| \partial_n v_\varepsilon(\overline{\Gde})\right| = \lambda_\varepsilon$,
    so that, for all $g \in C^0(\overline{\Gde})$,
    \begin{equation}\label{eq:measure bound}
        \left| \int_{\overline{\Gde}} g \, \d (\partial_n v_\varepsilon) \right|
        \leq \lambda_\varepsilon \|g\|_{L^\infty(\Gde)}.
    \end{equation}
    Applying \eqref{eq:measure bound} to $g = \varphi_\varepsilon$,
    together with the Dirichlet condition of \eqref{eq:quasi-mode properties},
    gives \eqref{eq:bound normal}.
\end{proof}

We can now move on to the proof of \cref{thm:narrow escape}.
\begin{proof}[Proof of \cref{thm:narrow escape}]
    From \cite[Proposition 1.3]{nous},
    starting from $\nu_\varepsilon$,
    the exit time $\tau_\varepsilon$ is exponentially distributed with parameter $\lambda_\varepsilon$,
    it is independent of the exit position,
    and, for $k \in \{1, \ldots, N\}$,
    \begin{equation}\label{eq:proof exit law}
        \mathbb{P}_{\nu_\varepsilon}(\kappa_\varepsilon = k)
        = - \frac{1}{\lambda_\varepsilon} \int_{\overline{\Gde}} \mathbbm{1}_{\overline{\Gke}} \, \d (\partial_n v_\varepsilon),
    \end{equation}
    where $\mathbbm{1}_{\overline{\Gke}} \in C^0(\overline{\Gde})$, as the holes are separated.
    It therefore remains to estimate $\lambda_\varepsilon$ and the right-hand side of \eqref{eq:proof exit law}.
    For brevity,
    we denote $\mathcal{E} = \mathcal{E}_1(\Kall) + \mathcal{E}_2(\Kall)$,
    so that $\mathcal{E} \to 0$ as $\varepsilon \to 0$.

    \paragraph{Mean exit time.}
    Apply \eqref{eq:green formula} with $u_1 = v_\varepsilon$ and $u_2 = \varphi_\varepsilon$.
    As $v_\varepsilon = 0$ on $\Gde$ and $\partial_n \varphi_\varepsilon = 0$ on $\Gne$,
    one has $\langle \partial_n \varphi_\varepsilon, v_\varepsilon \rangle = 0$,
    so that, using \eqref{eq:edp density}, \eqref{eq:quasi-mode properties} and $\int_{\Omega_\varepsilon} v_\varepsilon = 1$,
    \begin{equation}\label{eq:proof eigenvalue identity}
        \langle \partial_n v_\varepsilon, \varphi_\varepsilon \rangle
        = \bigl( \Delta v_\varepsilon, \varphi_\varepsilon \bigr)_{\Omega_\varepsilon} - \bigl( v_\varepsilon, \Delta \varphi_\varepsilon \bigr)_{\Omega_\varepsilon}
        = - \lambda_\varepsilon \bigl( v_\varepsilon, \varphi_\varepsilon \bigr)_{\Omega_\varepsilon} + C \Keo.
    \end{equation}
    Plugging \eqref{eq:bound scalar} and \eqref{eq:bound normal} into \eqref{eq:proof eigenvalue identity} leads to
    \[
        \lambda_\varepsilon \bigl( 1 + \mathrm{O}(\mathcal{E}_2(\Kall)) \bigr) + \mathrm{O}\bigl(\lambda_\varepsilon \mathcal{E}_1(\Kall)\bigr)
        = \lambda_\varepsilon \bigl( 1 + \mathrm{O}(\mathcal{E}) \bigr)
        = C \Keo.
    \]
    Isolating $\lambda_\varepsilon$ gives, for $\varepsilon$ small enough,
    \begin{equation}\label{eq:proof eigenvalue}
        \lambda_\varepsilon = C \Keo \bigl( 1 + \mathrm{O}(\mathcal{E}) \bigr),
        \qquad
        \mathbb{E}_{\nu_\varepsilon}[\tau_\varepsilon] = \frac{1}{\lambda_\varepsilon} = \frac{1}{C \Keo} \bigl( 1 + \mathrm{O}(\mathcal{E}) \bigr),
    \end{equation}
    which is the first claim.

    \paragraph{Exit position.}
    Fix $k \in \{1, \ldots, N\}$.
    Following \cite[Section 3.3]{nous},
    we compare $v_\varepsilon$ to the quasi-mode of the problem where the $k$-th hole has been removed:
    \[
        \varphi_\varepsilon^{(-k)} = 1 - \sum_{j \neq k} \psi_\varepsilon^{(j)} = \varphi_\varepsilon + \psi_\varepsilon^{(k)},
    \]
    with $\psi_\varepsilon^{(j)}$ given by \cref{hyp:quasi-mode}.
    Summing \eqref{eq:hyp:PDE quasi-mode} and \eqref{eq:hyp:norm} over $j \neq k$ as in \eqref{eq:quasi-mode properties},
    $-\Delta \varphi_\varepsilon^{(-k)} = C(\Keo - \Kek)$ in $\Omega_\varepsilon$,
    $\partial_n \varphi_\varepsilon^{(-k)} = 0$ on $\Gne$, $\| \varphi_\varepsilon^{(-k)} - 1 \|_{L^2(\Omega_\varepsilon)} = \mathrm{O}(\mathcal{E}_2(\Kall))$
    and, since $1 - \mathbbm{1}_{\overline{\Gke}} = \sum_{j \neq k} \mathbbm{1}_{\overline{\Gamma_j^\varepsilon}}$ on $\overline{\Gde}$,
    \begin{equation}\label{eq:proof removed hole}
        \bigl\| \varphi_\varepsilon^{(-k)} - \mathbbm{1}_{\overline{\Gke}} \bigr\|_{L^\infty(\Gde)} 
        = \Bigl\| \sum_{j \neq k} \bigl( \mathbbm{1}_{\overline{\Gamma_j^\varepsilon}} - \psi_\varepsilon^{(j)} \bigr) \Bigr\|_{L^\infty(\Gde)} 
        = \mathrm{O}(\mathcal{E}_1(\Kall)).
    \end{equation}
    Proceeding as in \eqref{eq:proof eigenvalue identity} with $\varphi_\varepsilon^{(-k)}$ instead of $\varphi_\varepsilon$,
    \begin{equation}\label{eq:proof removed identity}
        \langle \partial_n v_\varepsilon, \varphi_\varepsilon^{(-k)} \rangle
        = - \lambda_\varepsilon \bigl( v_\varepsilon, \varphi_\varepsilon^{(-k)} \bigr)_{\Omega_\varepsilon} + C \bigl( \Keo - \Kek \bigr)
        = - C \Kek + \bigl( C \Keo - \lambda_\varepsilon \bigr) - \lambda_\varepsilon \bigl( v_\varepsilon, \varphi_\varepsilon^{(-k)} - 1 \bigr)_{\Omega_\varepsilon}.
    \end{equation}
    The second term in the right-hand side of \eqref{eq:proof removed identity} is a $\mathrm{O}(\Keo \mathcal{E})$ by \eqref{eq:proof eigenvalue}.
    By the Cauchy--Schwarz inequality, \eqref{eq:bound u_0} and \eqref{eq:proof eigenvalue},
    the last term is bounded by $M \lambda_\varepsilon \| \varphi_\varepsilon^{(-k)} - 1 \|_{L^2(\Omega_\varepsilon)} = \mathrm{O}(\Keo \mathcal{E}_2(\Kall))$.
    Therefore,
    decomposing $\mathbbm{1}_{\overline{\Gke}} = \varphi_\varepsilon^{(-k)} + \bigl( \mathbbm{1}_{\overline{\Gke}} - \varphi_\varepsilon^{(-k)} \bigr)$
    and using \eqref{eq:measure bound} with \eqref{eq:proof removed hole} for the second part,
    \[
        \int_{\overline{\Gde}} \mathbbm{1}_{\overline{\Gke}} \, \d (\partial_n v_\varepsilon)
        = \langle \partial_n v_\varepsilon, \varphi_\varepsilon^{(-k)} \rangle + \mathrm{O}(\lambda_\varepsilon \mathcal{E})
        = - C \Kek + \mathrm{O}(\Keo \mathcal{E}).
    \]
    Finally,
    plugging this estimate and \eqref{eq:proof eigenvalue} into \eqref{eq:proof exit law},
    and since $\Kek \leq \Keo$,
    \[
        \mathbb{P}_{\nu_\varepsilon}(\kappa_\varepsilon = k)
        = \frac{C \Kek + \mathrm{O}(\Keo \mathcal{E})}{C \Keo \bigl( 1 + \mathrm{O}(\mathcal{E}) \bigr)}
        = \frac{\Kek}{\Keo} + \mathrm{O}(\mathcal{E}),
    \]
    which concludes the proof.
\end{proof}

\section{On the Green function of the Neumann problem}
\label{sec:app:result Green}
Several quasi-modes in this work depend on the Green function of the Laplacian with Neumann boundary conditions. 
In this section, 
we recall its definition and some of its properties. 
\begin{defi}\label{def:Neumann Green}
    For any $y \in \partial \Omega$, and $p \in (1, 3/2)$, 
    we denote $G(\cdot, y)$ the zero-mean weak $W^{1, p}(\Omega)$ solution of the following problem:
    \begin{equation}\label{eq:PDE Green}
        \left\{
            \begin{aligned}
            \Delta_x G(\cdot, y) = \frac{1}{|\Omega|} \quad & \text{ in } \Omega, \\
            \partial_n G(\cdot, y) = \delta_y \quad & \text{ on } \partial \Omega.
            \end{aligned}
        \right.
    \end{equation}
\end{defi}
The existence, uniqueness and regularity of $G$ are a consequence 
of \cite[Proposition 2.3]{aramaki2018existence}.
Through this definition it is clear that $G$ can be used to solve the following Neumann problem:
given $g \in C^0(\partial \Omega)$, the zero-mean classical solution of the following problem 
\begin{equation}\label{eq:smooth Neumann problem}
    \left\{
        \begin{aligned}
        \Delta u & = \frac{1}{|\Omega|} \int_{\partial \Omega} g \quad && \text{ in } \Omega, \\
        \partial_n u & = g \quad && \text{ on } \partial \Omega,
        \end{aligned}
    \right.
\end{equation}
can be expressed as 
\begin{equation}\label{eq:solution Neumann problem}
    u(x) = \int_{\partial \Omega} G(x, y) g(y) \, \d \sigma(y).
\end{equation}

This function has been studied in the context of the narrow escape problem 
in \cite{Singer2008, ChenFriedman2011, nous} and 
as itself in \cite{GreenFunctionDavid, lindsay2026dimensionalneumanngreensfunction}.
In particular, 
let us recall the following result from \cite{GreenFunctionDavid}
\begin{lem}\label{lem:Green David}
    There exists $K > 0$ such that for any $x \in \Omega$ and $y \in \partial \Omega$,
    the Green function $G$ defined in \cref{def:Neumann Green} and its derivatives can be bounded as 
    \[
        \left| G(x, y)\right| \leq K |x-y|^{-1}
        \qquad \text{ and }\qquad
        \left| \nabla_x G(x, y)\right| \leq K |x-y|^{-2} .
    \]
\end{lem}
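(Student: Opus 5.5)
The statement is cited from \cite{GreenFunctionDavid}, but here is how I would prove it directly. The plan is to split $G$ into an explicit singular part plus a remainder whose singularity is milder. Fix $y \in \partial\Omega$. Near $y$ the boundary is locally flat, so the natural leading term is the Neumann Green function of a half-space, $\frac{1}{2\pi|x-y|}$. This is twice the free-space kernel, because $y$ lies on the boundary and the method of images doubles the singularity. I would therefore write
\[
G(x,y) = \frac{\chi(|x-y|)}{2\pi|x-y|} + R(x,y),
\]
where $\chi$ is a smooth cutoff equal to $1$ near $0$ and supported in a ball of radius $r_0$. Here $r_0$ is chosen uniformly in $y$, using the smoothness and compactness of $\partial\Omega$.

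The remainder $R(\cdot,y)$ then solves a Neumann problem with explicit data. Its Laplacian is $\frac{1}{|\Omega|}$ minus the Laplacian of the cutoff term, which is bounded and supported away from $y$. Its Neumann datum is $\partial_n$ of the singular term on $\partial\Omega\setminus\{y\}$. Writing $\partial\Omega$ as a graph $s=h(\rho)$ over the tangent plane, with $h(\rho)=\mathrm{O}(|\rho|^2)$, the normal derivative of $\frac{1}{2\pi|x-y|}$ equals $\frac{(x-y)\cdot n(x)}{2\pi|x-y|^3}$. Since $(x-y)\cdot n(x)=\mathrm{O}(|x-y|^2)$ for $x$ on the boundary, this datum is $\mathrm{O}(|x-y|^{-1})$, uniformly in $y$. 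The compatibility condition holds by construction, since the total flux of $G$ is $1$.

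Next I would bound $R$ and $\nabla_x R$ pointwise by $C|x-y|^{-1+\delta}$ and $C|x-y|^{-2+\delta}$ respectively, or at least by $C|x-y|^{-1}$ and $C|x-y|^{-2}$. One route is to represent $R$ as a single layer potential plus a Newtonian potential, and then iterate the classical weakly singular kernel estimates on a $C^2$ surface. The alternative is to use dyadic annuli $A_j=\{2^{-j-1}<|x-y|<2^{-j}\}$: rescale each annulus to unit size and apply uniform interior and boundary Schauder estimates for the Neumann problem. On each rescaled annulus the data are bounded by $C2^{j}$, which yields $|R|\lesssim 2^{j}$ and $|\nabla R|\lesssim 2^{2j}$ there. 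Adding these to the explicit bounds on the singular term gives both claimed inequalities.

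The main obstacle is the a priori control at unit scale needed to start the dyadic argument. Local estimates require a bound on $R$ in some norm on each annulus, not only on its data. I would obtain this from a global $W^{1,p}$ bound with $p<3/2$, which \cite[Proposition 2.3]{aramaki2018existence} provides since the boundary data lie in $L^q$ with $q<2$. The zero-mean normalization then removes the additive constant. I would then bootstrap this through the scaled local estimates, taking care that all constants are uniform in $y\in\partial\Omega$, which compactness of $\partial\Omega$ guarantees. The uniformity in $x$ near the boundary is the delicate point, and there I would rely on boundary regularity for flattened Neumann problems.
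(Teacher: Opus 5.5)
\textbf{A genuinely different route, with one step that fails as written.} The paper gives no argument of its own here: it reads both bounds off \cite[Equation 4.3]{GreenFunctionDavid}. Your parametrix-plus-rescaling approach is self-contained (it would even recover the leading term of \cref{lem:Green scaling exact}), and its overall architecture is sound. The failing step is exactly the one you flag as the main obstacle.

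After rescaling an annulus $A_j$ to unit size, the Neumann datum of $R$ is $\mathrm{O}(1)$ and the source is $\mathrm{O}(2^{-2j})$. The local estimates therefore give $\sup_{A_j}|R|\leq C(1+\mathrm{avg}_{A_j'}|R|)$ and $\sup_{A_j}|\nabla R|\leq C(2^{j}+\mathrm{avg}_{A_j'}|\nabla R|)$, where $A_j'$ is a slightly enlarged annulus. So the rates are dictated entirely by the a priori averages, not by the data.

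Suppose all you know is $R\in W^{1,p}(\Omega)$ with $p<3/2$. Then $R\in L^{p^*}$ with $p^*=3p/(3-p)<3$. H\"older on a set of volume of order $2^{-3j}$ gives only $\mathrm{avg}|R|\leq C2^{j(3/p-1)}$ and $\mathrm{avg}|\nabla R|\leq C2^{3j/p}$. Since $3/p>2$, both are strictly worse than the claimed $2^{j}$ and $2^{2j}$. Bootstrapping through the same local estimates cannot help: a pointwise bound $|\nabla R|\leq C|x-y|^{-3/p}$ only puts $\nabla R$ back in $L^s$ for $s<p$. Indeed $p<3/2$ is precisely the integrability of $G$ itself, so this choice discards everything you gained by subtracting $\frac{1}{2\pi|x-y|}$.

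\textbf{The fix stays within your framework.} The datum of $R$ is $\mathrm{O}(|x-y|^{-1})$ on a two-dimensional surface. It is therefore bounded in $L^{4/3}(\partial\Omega)$, in fact in every $L^q$ with $q<2$, uniformly in $y$. The Lax--Milgram argument of the $H^1$ part of \cref{lem:Green solution} then gives $\|R(\cdot,y)\|_{H^1(\Omega)}\leq C$ uniformly in $y$. Uniqueness of the zero-mean $W^{1,p}$ solution identifies this function with $G(\cdot,y)$ minus the singular term, up to an additive constant bounded uniformly in $y$. With $R\in L^6$ and $\nabla R\in L^2$, the same scaling yields $|R|\leq C|x-y|^{-1/2}$ and $|\nabla R|\leq C|x-y|^{-3/2}$, which is more than enough. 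Any $W^{1,q}$ bound with $q\geq 3/2$ would also suffice.

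\textbf{One further point to justify.} When you say the Neumann datum of $R$ is the normal derivative of the singular term on $\partial\Omega\setminus\{y\}$, you are using that the weak normal derivative of $\chi/(2\pi|x-y|)$ equals $\delta_y$ plus its pointwise normal derivative. This holds because the flux through $\Omega\cap\partial B(y,\epsilon)$ tends to exactly $1$, the cap having area $2\pi\epsilon^2(1+\mathrm{O}(\epsilon))$. Without this check, $R$ could still carry a point source at $y$ and none of the bounds would follow. The datum of $R$ is minus that pointwise derivative, but the sign is irrelevant for the estimates.
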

\begin{proof}
    Direct consequence of \cite[Equation 4.3]{GreenFunctionDavid}, 
    as $\Omega \times \partial \Omega 
    \subset \overline{\Omega} \times \overline{\Omega} \setminus \{x=y\}$.
\end{proof}
We also recall the main result of \cite{lindsay2026dimensionalneumanngreensfunction}.
\begin{lem}\label{lem:Green scaling exact}
    In the limit $|x-y| \to 0$, 
    the function $G$ defined in \cref{def:Neumann Green} 
    can be written as 
    \[
        G(x, y) = \frac{1}{2\pi}\frac{1}{|x-y|} + \mathrm{O} \left( \left| \log |x-y|\right| \right),
    \]
    where the $\mathrm{O}$ refers to the limit $|x-y| \to 0$, uniformly in $y \in \partial \Omega$ and $x \in \overline{\Omega} \setminus \{y\}$. 
\end{lem}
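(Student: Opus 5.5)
The plan is a parametrix argument: subtract the half-space Neumann Green function and show that the remainder solves a Neumann problem with weakly singular boundary data, which \cref{lem:Green David} turns into a logarithmic bound. Fix $y \in \partial\Omega$ and write
\[
    G(x,y) = \frac{1}{2\pi}\frac{1}{|x-y|} + R(x,y), \qquad x \in \overline{\Omega}\setminus\{y\}.
\]
The choice of $\frac{1}{2\pi|x-y|}$ is motivated by the image method: for a flat boundary it is twice the free-space kernel $\frac{1}{4\pi|x-y|}$, so its flux through a small half-sphere around $y$ is exactly $1$. The first step is to compute the distributional normal derivative of $\Gamma_y(x) = \frac{1}{2\pi|x-y|}$ on the curved boundary. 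I will apply Green's formula on $\Omega \setminus B(y,r)$ and let $r \to 0$. The flux through $\partial B(y,r)\cap\Omega$ tends to $1$, since that set is a half-sphere up to an $\mathrm{O}(r^3)$ area correction coming from curvature; this recovers $\delta_y$. On $\partial\Omega\setminus\{y\}$ the pointwise normal derivative is
\[
    h_y(x) = \partial_{n_x}\Gamma_y(x) = -\frac{(x-y)\cdot n(x)}{2\pi|x-y|^3}.
\]
Since $\partial\Omega$ is smooth, $|(x-y)\cdot n(x)| \leq \kappa |x-y|^2$ uniformly in $x, y$, with $\kappa$ controlled by the curvature. Hence $|h_y(x)| \leq C|x-y|^{-1}$, which lies in $L^p(\partial\Omega)$ for every $p<2$, uniformly in $y$.

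Next, $\Delta\Gamma_y = 0$ in $\Omega$, so $R(\cdot,y)$ solves
\[
    \Delta R = \frac{1}{|\Omega|} \text{ in } \Omega, \qquad \partial_n R = -h_y \text{ on } \partial\Omega.
\]
This is compatible because $G$ and $\Gamma_y$ carry the same unit flux. Here I would invoke \cref{lem:Green solution}, or the representation \eqref{eq:solution Neumann problem} extended to $L^p$ data, to write
\[
    R(x,y) = -\int_{\partial\Omega} G(x,z)\,h_y(z)\,\d\sigma(z) + c_y ,
\]
where the constant $c_y$ restores the zero-mean normalization of $G$. This constant is bounded uniformly in $y$, because $\int_\Omega |x-y|^{-1}\,\d x$ is bounded uniformly in $y$, and likewise for the integral term.

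The key estimate then follows from \cref{lem:Green David}:
\[
    |R(x,y)| \leq C + C\int_{\partial\Omega}\frac{\d\sigma(z)}{|x-z|\,|z-y|}.
\]
Away from $y$ this is bounded. Near $y$, I flatten $\partial\Omega$ by a chart, which is bi-Lipschitz uniformly in $y$. With $x'$ the projection of $x$ onto $\partial\Omega$, one has $|x-z| \geq c|x'-z|$ and $|x'-y| \geq c|x-y|$ up to lower-order terms. The integral is then bounded by the planar integral $\int_{B(0,1)\subset\real^2}|a-z|^{-1}|z|^{-1}\,\d z$ with $|a|\sim|x-y|$. Splitting into the regions $|z|<|a|/2$, $|z-a|<|a|/2$ and the rest gives $\mathrm{O}(1) + \mathrm{O}(\log(1/|a|))$. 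This yields $R = \mathrm{O}(|\log|x-y||)$ uniformly.

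The main obstacle I expect is the rigorous justification of the first step: identifying $\partial_n\Gamma_y$ as $\delta_y + h_y$ in the weak $W^{1,p}$ sense of \cref{def:Neumann Green}, and checking that the uniqueness of zero-mean $W^{1,p}$ solutions indeed identifies $\Gamma_y + R$ with $G(\cdot,y)$. For this I would test against smooth functions and use the excision $B(y,r)$ with an explicit curvature expansion of $\partial\Omega$ in normal coordinates at $y$. I would also make sure that every constant in the argument depends only on bounds on the second fundamental form and on the constant $K$ of \cref{lem:Green David}, which gives the claimed uniformity in $y\in\partial\Omega$.
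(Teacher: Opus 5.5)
Your parametrix argument takes a different route from the paper, which gives no proof of this lemma and imports it from \cite{lindsay2026dimensionalneumanngreensfunction}. Your route stays inside the appendix: it bootstraps the crude bound $|G(x,z)|\le K|x-z|^{-1}$ of \cref{lem:Green David} through the $L^p$ representation of \cref{lem:Green solution}. There is no circularity, since the proof of that lemma does not use the present one.

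The intermediate steps are sound:
\begin{itemize}
\item $h_y=\mathrm{O}(|z-y|^{-1})$ lies in $L^p(\partial\Omega)$ for $p<2$, uniformly in $y$.
\item Testing the weak formulation of $\Gamma_y$ with $v=1$ gives $\int_{\partial\Omega}h_y\,\mathrm{d}\sigma=-1$, which is exactly the compatibility condition.
\item $G(\cdot,y)$, $\Gamma_y$ and $\int_{\partial\Omega}G(\cdot,z)h_y(z)\,\mathrm{d}\sigma(z)$ all lie in $W^{1,p}(\Omega)$ for $p<3/2$. Uniqueness of zero-mean weak solutions therefore identifies $R$ up to the bounded constant $c_y=-|\Omega|^{-1}\int_\Omega\Gamma_y$.
\end{itemize}
Compared with a full local expansion, you do not identify the coefficient of the logarithmic term. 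Finer expansions of the surface Neumann Green function express it through the mean curvature at $y$. The lemma does not require this.

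There is, however, one false step in your final estimate. The claim $|x'-y|\ge c|x-y|$ fails whenever $x$ approaches $y$ along the normal. For $x=y-t\,n(y)$ one has $x'=y$, so $a=0$ and your planar integral becomes $\int_{B(0,1)}|z|^{-2}\,\mathrm{d}z=+\infty$. In general your argument only yields $\mathrm{O}(\log(1/|x'-y|))$, which is not uniform in $x\in\overline{\Omega}\setminus\{y\}$.

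The fix is to keep the normal distance $d=\mathrm{dist}(x,\partial\Omega)$:
\begin{itemize}
\item Write $x=x'-d\,n(x')$ and use $|n(x')\cdot(x'-z)|\le\kappa|x'-z|^2$. This gives $|x-z|^2\ge\frac12(|x'-z|^2+d^2)$ for $d\le(4\kappa)^{-1}$.
\item After flattening, the integral is then bounded by $C\int_{B(0,1)}\frac{\mathrm{d}z}{(|a-z|+d)\,|z|}$, and $|a|+d\ge c|x-y|$.
\item If $|a|\ge d$, your three-region splitting applies, now with $|a|\ge c|x-y|/2$.
\item If $d\ge|a|$, bound the integrand by $(d|z|)^{-1}$ on $\{|z|<2d\}$, which contributes $4\pi$. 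On $\{|z|\ge 2d\}$ one has $|a-z|\ge|z|/2$, so the integrand is at most $2|z|^{-2}$, contributing $\mathrm{O}(\log(1/d))=\mathrm{O}(|\log|x-y||)$.
\end{itemize}
With this correction the proof goes through.
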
 

With these estimates,
we now extend \eqref{eq:smooth Neumann problem} and \eqref{eq:solution Neumann problem} to the case of $g \in L^p(\partial \Omega)$, $p \in (1, \infty)$.
\begin{lem}\label{lem:Green solution}
    Fix $p \in (1, \infty)$.
    For any $g \in L^p(\partial \Omega)$,
    consider the function $\varphi$ defined on $\Omega$ by
    \[
        \varphi(x) = \int_{\partial \Omega} G(x, y) \, g(y) \, \d\sigma(y).
    \]
    Then $\varphi$ is the zero-mean $W^{1, p}(\Omega)$ weak solution of
    \begin{equation}\label{eq:weak Neumann problem}
        \left\{
            \begin{aligned}
            \Delta \varphi & = \frac{1}{|\Omega|} \int_{\partial \Omega} g \quad && \text{ in } \Omega, \\
            \partial_n \varphi & = g \quad && \text{ on } \partial \Omega,
            \end{aligned}
        \right.
    \end{equation}
    that is, for every $v \in C^1(\overline\Omega)$,
    \[
        \int_\Omega \nabla \varphi \cdot \nabla v
        = - \frac{1}{|\Omega|} \Bigl( \int_{\partial \Omega} g \, \d\sigma \Bigr) \Bigl( \int_\Omega v \Bigr)
        + \int_{\partial \Omega} g \, v \, \d\sigma .
    \]
    Moreover, for almost every $x \in \Omega$,
    \begin{equation}\label{eq:gradient Green solution}
        \nabla \varphi(x) = \int_{\partial \Omega} \nabla_x G(x, y) \, g(y) \, \d\sigma(y).
    \end{equation}
    Finally, if $p \geq 4/3$, then $\varphi \in H^1(\Omega)$,
    and the weak formulation above holds for every $v \in H^1(\Omega)$.
\end{lem}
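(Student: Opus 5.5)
The plan is to build the layer potential from its kernel representation, first in the smooth case, then pass to $L^p$ data by density. For $g \in C^0(\partial\Omega)$, the zero-mean classical solution of \eqref{eq:smooth Neumann problem} is given by \eqref{eq:solution Neumann problem}, so the weak formulation holds for every $v \in C^1(\overline\Omega)$ (Green's formula). For general $g \in L^p(\partial\Omega)$, I will approximate $g$ in $L^p(\partial\Omega)$ by continuous $g_m$ and set $\varphi_m = \int_{\partial\Omega} G(\cdot,y) g_m(y)\,\d\sigma(y)$. The key analytic input is \cref{lem:Green David}, which gives $|G(x,y)| \leq K|x-y|^{-1}$ and $|\nabla_x G(x,y)| \leq K|x-y|^{-2}$. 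These kernels define bounded operators from $L^p(\partial\Omega)$ into $L^p(\Omega)$: the map $g \mapsto \int |x-y|^{-2}|g(y)|\,\d\sigma(y)$ is bounded from $L^p(\partial\Omega)$ to $L^p(\Omega)$ by a Schur test. Indeed, for fixed $y\in\partial\Omega$, $\int_\Omega |x-y|^{-2}\,\d x$ is finite in dimension $3$. For fixed $x$ at distance $\delta$ from the boundary, $\int_{\partial\Omega}|x-y|^{-2}\,\d\sigma(y) \lesssim |\log\delta|$, which is not bounded. So I will instead use the weighted Schur test with weight $|x-y|^{-a}$, or equivalently view $\nabla\varphi$ as the gradient of a harmonic-type extension. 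The weighted Schur test is the step I expect to be the main obstacle.

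Concretely, I would handle the obstacle as follows. By Minkowski's integral inequality, $\|\int |x-\cdot|^{-2}|g|\|_{L^p(\Omega)}$ is controlled through the $L^p(\Omega)$ norm of $x \mapsto |x-y|^{-2}$ only if $2p<3$, which covers just $p<3/2$. For larger $p$, I will use the mixed Schur test with $h(x)=d(x,\partial\Omega)^{-s}$ and $h(y)=1$ for small $s>0$, computing both one-dimensional-type integrals in boundary-normal coordinates. If this turns out to be delicate, a fallback is to invoke the $W^{1,p}$ solvability of \cite[Proposition 2.3]{aramaki2018existence} for Neumann data in $L^p(\partial\Omega)\subset W^{-1/p,p}(\partial\Omega)$, which gives $\|\varphi_m\|_{W^{1,p}} \leq C\|g_m\|_{L^p}$ directly. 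Either way, $(\varphi_m)$ is Cauchy in $W^{1,p}(\Omega)$ and converges to some $\tilde\varphi$ satisfying the weak formulation; both sides are continuous in $g$ for $v\in C^1(\overline\Omega)$. The zero mean passes to the limit.

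Next, I identify $\tilde\varphi=\varphi$ and prove \eqref{eq:gradient Green solution}. Since $|G(x,y)|\leq K|x-y|^{-1}$ and $\int_\Omega\int_{\partial\Omega}|x-y|^{-1}|g_m-g|\,\d\sigma\,\d x \leq C\|g_m-g\|_{L^1}$ (the kernel is integrable in $x$ uniformly in $y$), $\varphi_m\to\varphi$ in $L^1(\Omega)$. Hence $\varphi=\tilde\varphi$ a.e. The same Fubini bound with $|x-y|^{-2}$, which is integrable over $\Omega$ uniformly in $y$, gives $\nabla\varphi_m \to \int\nabla_x G\,g$ in $L^1(\Omega)$. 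This identifies the weak gradient as claimed. For smooth $g_m$, differentiation under the integral is justified away from the boundary, since for interior $x$ the kernel is smooth in $x$ with an integrable bound.

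Finally, if $p\geq 4/3$, then $L^p(\partial\Omega)\subset L^{4/3}(\partial\Omega)$ on the bounded boundary. The $W^{1,4/3}$ bound alone is not enough, so I will instead use the energy estimate. Since $H^1(\Omega)$ traces lie in $H^{1/2}(\partial\Omega)\subset L^4(\partial\Omega)$ in dimension $3$, the functional $v\mapsto\int g v$ is continuous on $H^1$ for $g\in L^{4/3}$. Lax--Milgram on zero-mean $H^1$ functions then gives an $H^1$ solution with $\|\cdot\|_{H^1}\leq C\|g\|_{L^{4/3}}$. Applying this to $g_m$ shows $\varphi_m$ is Cauchy in $H^1$, so $\varphi\in H^1$. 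The weak formulation extends from $C^1(\overline\Omega)$ to all $v\in H^1$ by density and trace continuity.
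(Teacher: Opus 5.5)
Your proof is correct, but its main step takes a different route from the paper's. The paper never estimates the kernel to get $W^{1,p}$ control. It takes existence, uniqueness and the stability bound $\|\widehat\varphi\|_{W^{1,p}(\Omega)}\le C\|g\|_{L^p(\partial\Omega)}$ directly from \cite[Proposition 2.3]{aramaki2018existence}, which is your fallback. It identifies $\varphi=\widehat\varphi$ through pointwise convergence $\varphi_n(x)\to\varphi(x)$ (by Hölder, since $G(x,\cdot)$ is bounded for fixed interior $x$) together with a.e.\ convergence of a subsequence. It proves the gradient formula by a separate distributional Fubini computation. For $p\ge 4/3$ it builds the Lax--Milgram solution and concludes by uniqueness of zero-mean $W^{1,4/3}$ weak solutions.

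The weighted Schur test you flagged as the main obstacle actually goes through. Let $\delta(x)=\mathrm{dist}(x,\partial\Omega)$, put weight $\delta^{-s}$ on $\Omega$ and weight $1$ on $\partial\Omega$, with $0<s<1/p$. The two Schur conditions are then $\int_{\partial\Omega}|x-y|^{-2}\,\mathrm{d}\sigma(y)\le C(1+|\log\delta(x)|)\le C_s\,\delta(x)^{-sp'}$ and $\sup_{y}\int_\Omega|x-y|^{-2}\delta(x)^{-sp}\,\mathrm{d}x\le C\bigl(1+\int_0^1 t^{-sp}(1+|\log t|)\,\mathrm{d}t\bigr)<\infty$. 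The second one follows by integrating first over the tangential variables in boundary-normal coordinates.

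This route buys a self-contained argument resting only on the pointwise bounds of \cref{lem:Green David}. The gradient representation comes essentially for free, since differentiation under the integral is legitimate at every interior point for any $g\in L^1(\partial\Omega)$. Your Cauchy-sequence argument in $H^1$, via the Lax--Milgram stability estimate, also avoids the $W^{1,4/3}$ uniqueness that the paper's last step needs. What your route does not give is uniqueness of zero-mean $W^{1,p}$ weak solutions for $p<2$, which the definite article in the statement presupposes. For $p\ge 2$ that uniqueness follows from an energy argument since $W^{1,p}\subset H^1$, but for $p<2$ you would still quote Aramaki.

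One small fix: take the approximants $g_m$ smooth rather than merely continuous. Then $\varphi_m$ is regular up to the boundary, and both Green's formula and $\varphi_m\in H^1$ are immediate.
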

\begin{proof}
    First notice that $\varphi$ is well defined on $\Omega$:
    for a fixed $x \in \Omega$, \cref{lem:Green David} gives $|G(x, \cdot)| \leq K \, \mathrm{dist}(x, \partial \Omega)^{-1}$ on $\partial \Omega$,
    and $g \in L^p(\partial \Omega) \subset L^1(\partial \Omega)$.

    \paragraph{Weak solution.}
    From \cite[Proposition 2.3]{aramaki2018existence}, 
    since $g \in L^p(\partial \Omega)$ and the compatibility equation is verified, 
    there exists a unique zero-mean $\widehat{\varphi} \in W^{1, p}(\Omega)$
    weak solution to \eqref{eq:weak Neumann problem} and
    \begin{equation}\label{eq:stability phihat}
        \| \widehat{\varphi} \|_{W^{1, p}(\Omega)} \leq C \| g \|_{L^p(\partial \Omega)} .
    \end{equation}
    It remains to show that $\varphi = \widehat{\varphi}$.
    As $p < \infty$,
    there is a sequence $(g_n)_{n \in \mathbb{N}}$ of smooth functions on $\partial \Omega$ such that $g_n \to g$ in $L^p(\partial \Omega)$.
    For each $n \in \mathbb{N}$,
    let $\varphi_n(x) = \int_{\partial \Omega} G(x, y) \, g_n(y) \, \d\sigma(y)$.
    By \eqref{eq:solution Neumann problem},
    $\varphi_n$ is the zero-mean classical solution of \eqref{eq:smooth Neumann problem} with $g_n$ instead of $g$,
    so that, by the Green formula,
    it is also the zero-mean $W^{1, p}(\Omega)$ weak solution associated with $g_n$.
    By linearity, \eqref{eq:stability phihat} applied to $g_n - g$ gives
    \[
        \| \varphi_n - \widehat{\varphi} \|_{W^{1, p}(\Omega)}
        \leq C \| g_n - g \|_{L^p(\partial \Omega)}
        \underset{n \to \infty}{\longrightarrow} 0,
    \]
    so that, up to a subsequence, $\varphi_n \to \widehat{\varphi}$ almost everywhere in $\Omega$.
    On the other hand, for a fixed $x \in \Omega$,
    as $G(x, \cdot)$ is bounded on $\partial \Omega$,
    the Hölder inequality gives
    \begin{equation}\label{eq:pointwise phin}
        \left| \varphi_n(x) - \varphi(x) \right|
        \leq \left\| G(x, \cdot) \right\|_{L^{p'}(\partial \Omega)} \left\| g_n - g \right\|_{L^p(\partial \Omega)}
        \underset{n \to \infty}{\longrightarrow} 0 ,
    \end{equation}
    where $p' = \frac{p}{p-1}$ is the conjugate exponent of $p$.
    Hence $\varphi = \widehat{\varphi}$ almost everywhere in $\Omega$.

    \paragraph{Gradient.}
    Let $u \in C_c^\infty(\Omega)$.
    By definition of the weak gradient,
    \begin{equation}\label{eq:computation gradient Green}
        \left\langle \nabla \varphi, \, u \right\rangle_{\mathcal{D}'(\Omega), C_c^\infty(\Omega)}
        = - \int_{\Omega} \varphi \, \nabla u
        = - \int_{\Omega} \Bigl( \int_{\partial \Omega} G(x, y) \, g(y) \, \d\sigma(y) \Bigr) \nabla u(x) \, \d x .
    \end{equation}
    To exchange the integrals,
    notice that, as $\Omega \subset \real^3$ is bounded,
    $M = \sup_{y \in \partial \Omega} \int_\Omega \bigl( |x - y|^{-1} + |x - y|^{-2} \bigr) \d x < \infty$.
    Hence, by the Tonelli theorem and \cref{lem:Green David},
    \begin{equation}\label{eq:fubini justification}
        \int_{\partial \Omega} \int_{\Omega} \bigl( |G(x, y)| + |\nabla_x G(x, y)| \bigr) |g(y)| \, \d x \, \d\sigma(y)
        \leq K M \| g \|_{L^1(\partial \Omega)}
        < \infty .
    \end{equation}
    Moreover, for a fixed $y \in \partial \Omega$,
    $\Delta_x G(\cdot, y)$ is constant in $\Omega$,
    so that $G(\cdot, y)$ is smooth in $\Omega$ by Weyl's lemma,
    and, as $u$ is compactly supported in $\Omega$, an integration by parts gives
    $- \int_\Omega G(x, y) \nabla u(x) \, \d x = \int_\Omega \nabla_x G(x, y) \, u(x) \, \d x$.
    Therefore, as $u$ and $\nabla u$ are bounded,
    \eqref{eq:fubini justification} and the Fubini theorem in \eqref{eq:computation gradient Green} lead to
    \begin{equation*}
        \begin{aligned}
        \left\langle \nabla \varphi, \, u \right\rangle_{\mathcal{D}'(\Omega), C_c^\infty(\Omega)}
        & = - \int_{\partial \Omega} \Bigl( \int_{\Omega} G(x, y) \, \nabla u(x) \, \d x \Bigr) g(y) \, \d\sigma(y) \\
        & = \int_{\partial \Omega} \Bigl( \int_{\Omega} \nabla_x G(x, y) \, u(x) \, \d x \Bigr) g(y) \, \d\sigma(y) \\
        & = \int_{\Omega} \Bigl( \int_{\partial \Omega} \nabla_x G(x, y) \, g(y) \, \d\sigma(y) \Bigr) u(x) \, \d x .
        \end{aligned}
    \end{equation*}
    Hence \eqref{eq:gradient Green solution} holds in the sense of distributions.
    As both sides of \eqref{eq:gradient Green solution} belong to $L^1(\Omega)$,
    the left one as $\varphi \in W^{1, p}(\Omega)$ and the right one by \eqref{eq:fubini justification},
    they coincide almost everywhere in $\Omega$.

    \paragraph{$H^1$ regularity.}
    Assume now that $p \geq 4/3$, so that $g \in L^{4/3}(\partial \Omega)$ as $\partial \Omega$ is bounded.
    As $\partial \Omega$ is a smooth compact surface,
    the Sobolev embedding $H^{\frac{1}{2}}(\partial \Omega) \subset L^4(\partial \Omega)$ holds
    (see \cite[Theorem 6.5]{DiNezza2012} together with local charts).
    Combined with the trace theorem and the Hölder inequality,
    \[
        \Bigl| \int_{\partial \Omega} g \, v \, \d\sigma \Bigr|
        \leq \| g \|_{L^{4/3}(\partial \Omega)} \| v \|_{L^4(\partial \Omega)}
        \leq C \| g \|_{L^{4/3}(\partial \Omega)} \| v \|_{H^1(\Omega)},
        \qquad v \in H^1(\Omega),
    \]
    so that the right-hand side of the weak formulation is a continuous linear form on $H^1(\Omega)$.
    By the Lax--Milgram theorem on the zero-mean functions of $H^1(\Omega)$,
    together with the Poincaré--Wirtinger inequality on $\Omega$,
    there exists a unique zero-mean $\widetilde{\varphi} \in H^1(\Omega)$ satisfying the weak formulation for every $v \in H^1(\Omega)$.
    As $\Omega$ is bounded, $\widetilde{\varphi} \in W^{1, 4/3}(\Omega)$,
    and $\widetilde{\varphi}$ is a zero-mean $W^{1, 4/3}(\Omega)$ weak solution of \eqref{eq:weak Neumann problem}.
    Applying the first part of this lemma with the exponent $4/3$,
    the uniqueness of this solution gives $\varphi = \widetilde{\varphi} \in H^1(\Omega)$.
\end{proof}

Let us conclude this section with a uniform bound on the $L^2$ norm of the Green function $G$. 

\begin{lem}\label{lem:Green L2 bound}
    There exists a constant $M$ depending only on $\Omega$ such that 
    \begin{equation}\label{eq:green L2 bound}
        \sup_{y \in \partial \Omega} \int_\Omega G(x,y)^2 \, \d x < M.
    \end{equation}
\end{lem}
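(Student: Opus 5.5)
The bound follows from the pointwise estimate of \cref{lem:Green David}, so no new information on $G$ is needed. For every $y \in \partial\Omega$ and $x \in \Omega$, that lemma gives $|G(x,y)| \leq K |x-y|^{-1}$, hence
\[
    \int_\Omega G(x,y)^2 \, \d x \leq K^2 \int_\Omega \frac{\d x}{|x-y|^2}.
\]
It remains to bound the right-hand side uniformly in $y \in \partial \Omega$, using only that $\Omega \subset \real^3$ is bounded.

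Let $D = \operatorname{diam}(\Omega)$, which is finite. For any $y \in \partial\Omega \subset \overline{\Omega}$ we have $\Omega \subset B(y, D)$. Integrating in spherical coordinates centred at $y$ gives
\[
    \int_\Omega \frac{\d x}{|x-y|^2} \leq \int_{B(y,D)} \frac{\d x}{|x-y|^2} = 4\pi \int_0^D \frac{r^2}{r^2} \, \d r = 4\pi D.
\]
The singularity $|x-y|^{-2}$ is integrable in dimension three because the exponent $2$ is less than $3$. This is the only point where the dimension enters, and it is the same integrability already used to define $M$ in the proof of \cref{lem:Green solution}.

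Combining the two displays gives $\sup_{y \in \partial\Omega} \int_\Omega G(x,y)^2 \, \d x \leq 4\pi K^2 D$. Choosing any $M > 4\pi K^2 D$ yields the strict inequality \eqref{eq:green L2 bound}, with $M$ depending only on $\Omega$ through $K$ and $D$. Measurability of $x \mapsto G(x,y)^2$ is not an issue, since $G(\cdot,y)$ is smooth in $\Omega$ by Weyl's lemma, as noted in the proof of \cref{lem:Green solution}. There is no real obstacle: the only nontrivial input is the uniform constant $K$ in \cref{lem:Green David}, which is taken from \cite{GreenFunctionDavid}.
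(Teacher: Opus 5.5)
Your proposal is correct and matches the paper's proof essentially line for line. Both use the pointwise bound $|G(x,y)|\leq K|x-y|^{-1}$ and then integrate $|x-y|^{-2}$ over a ball $B(y,D)\supset\Omega$ in spherical coordinates to get the uniform bound $4\pi K^2 D$. The only additions are your explicit choice $D=\operatorname{diam}(\Omega)$ and the remarks on measurability and on obtaining a strict inequality, which are harmless.
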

\begin{proof}
    Let $D > 0$, so that $\Omega \subset B(y, D)$ for every $y \in \overline\Omega$. 
    Using \cref{lem:Green David}, one has that 
    \[
        \int_\Omega G(x,y)^2 \, \d x \leq  K^2 \int_\Omega \frac{\d x}{|x-y|^2}.
    \]
    Then, passing to spherical coordinates centered at $y$ and using $\Omega \subset B(y,D)$,
    \[
        \int_\Omega \frac{\d x}{|x-y|^2} \leq \int_{B(y,D)} \frac{\d x}{|x-y|^2} = \int_0^{2\pi}\!\!\int_0^\pi\!\! \int_0^D \frac{1}{r^2} \, r^2 \sin\theta \, \d r \, \d \theta \, \d \phi = 4\pi D < \infty,
    \]
    which is indeed bounded uniformly in $y \in \partial \Omega$.
\end{proof}

\section{Computation of the asymptotics of the integrals}
 \label{sec:app:integrals}
This section is devoted to the computation of the asymptotics of the integrals appearing in the proof of \cref{prop:quasi-mode spherical holes} and \cref{lem:edp quasi-mode slit}.

\subsection{In the case of spherical holes from \texorpdfstring{\cref{sub:spherical}}{Section 3.2}}
In this subsection, 
we go back to the notation of \cref{sub:spherical} 
and compute the asymptotics of the integrals appearing in the proof of \cref{prop:quasi-mode spherical holes}.

For this let us introduce a local parametrization of the boundary $\partial \Omega$. 
For $r > 0$ and $k\in\{1,\ldots,N\}$ we use the Riemannian
exponential map of $\partial\Omega$ at $\xk$,
\[
    y(\cdot)\colon B(0,r)\subset\real^2\longrightarrow\partial\Omega,
    \qquad y(z)\coloneqq\Exp^{\partial\Omega}_{\xk}(z),
\]
where $T_{\xk}\partial\Omega\cong\real^2$ is identified through an orthonormal
basis $(e_1,e_2)$ and $n(\xk)$ is the outward unit normal to $\Omega$ at $\xk$.
As $\partial \Omega$ is compact and smooth, 
there exists $r>0$ independent of $k$ such that for any $\rek<r$, $y(\cdot)$ is a diffeomorphism from a neighborhood of $0$
onto its image.
Let us state the three properties on $y$ that are needed for the asymptotic analysis of the integrals in \cref{sub:spherical}.
 
\paragraph{Gauss's lemma.}
Radial lines through the origin are unit-speed geodesics issued from $\xk$, so
\begin{equation}\label{eq:gauss}
    d_{\partial\Omega}\bigl(y(z),\xk\bigr)=|z|,
    \qquad\text{hence}\qquad
    y^{-1}(\Gke)=B(0,\rek)\ \ \text{exactly}.
\end{equation}
 
\paragraph{Jacobian.}
Writing the surface measure on $\Gke$ as $\d\sigma(y)=J_k(z)\,\d z$, the area element of
geodesic polar coordinates satisfies, uniformly in $k$,
\begin{equation}\label{eq:jacobian}
    \left| J_k(z) - 1 \right|\le C|z|^2,
    \qquad C=C(\Omega)>0.
\end{equation}

\paragraph{Extrinsic distance.}
The map $\Exp^{\partial\Omega}_{\xk}$ has the second-order expansion
using the second fundamental form $\II_{\xk}$ of $\partial\Omega$ at $\xk$:
\begin{equation}\label{eq:extrinsic expansion}
    y(z)=\xk+z+\tfrac12\,\II_{\xk}(z,z)\,n(\xk)+\mathrm{O}(|z|^3).
\end{equation}
Notice that in \eqref{eq:extrinsic expansion}, 
the quadratic
correction is normal to $T_{\xk}\partial\Omega$; consequently, for
$w_0,w\in\overline{B(0,1)}$,
\begin{equation}\label{eq:extrinsic}
    \bigl|y(\rek w_0)-y(\rek w)\bigr|=\rek\,|w_0-w|\,\bigl(1+\mathrm{O}((\rek)^2)\bigr),
    \qquad\text{uniformly}.
\end{equation}
 
Let us now state the main result of this subsection, 
which is a direct computation of the integrals needed in \cref{sub:spherical}. 
\begin{lem}\label{lem:integrals circular}
    Fix $k\in\{1,\ldots,N\}$ and let $\rho_\varepsilon^{(k)}$ be the function defined on $\Gke$ by
    \begin{equation}\label{eq:weight}
        \rho_\varepsilon^{(k)} : y \mapsto \bigl((\rek)^2-d_{\partial\Omega}(y,\xk)^2\bigr)^{-1/2}.
    \end{equation}
    Then, as
    $\varepsilon\to0$, it holds that 
    \begin{enumerate}[label=\textup{(\alph*)}]
        \item \textup{\textbf{(Integrability)}} for every $p \in [1, 2)$,\;
        $\displaystyle\int_{\Gke}\rho_\varepsilon^{(k)}(y)^p\,\d\sigma(y)=\frac{2\pi}{2-p}\,(\rek)^{2-p}\bigl(1+\mathrm{O}((\rek)^2)\bigr)$,
        while $\rho_\varepsilon^{(k)} \notin L^2(\Gke)$;
        \item \textup{\textbf{(Mass)}} in particular, for $p = 1$,\;
        $\displaystyle\int_{\Gke}\rho_\varepsilon^{(k)}(y)\,\d\sigma(y)=2\pi\rek\bigl(1+\mathrm{O}((\rek)^2)\bigr)$;
        \item \textup{\textbf{(Newtonian potential)}} for every $x\in\Gke$,\;
        $\displaystyle\int_{\Gke}\rho_\varepsilon^{(k)}(y)\,\frac{1}{2\pi |x-y|}\,\d\sigma(y)=\frac{\pi}{2} +\mathrm{O}(\rek)$;
        \item \textup{\textbf{(Logarithmic potential)}} for every $x\in\Gke$,\;
        $\displaystyle\int_{\Gke}\rho_\varepsilon^{(k)}(y)\,\log|x-y|\,\d\sigma(y)=\mathrm{O}\bigl(\rek\,|\log\rek|\bigr)$.
    \end{enumerate}
\end{lem}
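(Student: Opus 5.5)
Every estimate will be pulled back to the flat disc $B(0,\rek)\subset\real^2$ through the exponential chart $y(\cdot)$, followed by the rescaling $z=\rek w$ with $w\in B(0,1)$. By Gauss's lemma \eqref{eq:gauss}, $\rho_\varepsilon^{(k)}(y(z))=((\rek)^2-|z|^2)^{-1/2}=(\rek)^{-1}(1-|w|^2)^{-1/2}$ exactly, and $\d\sigma=J_k\,\d z$ with $J_k=1+\mathrm{O}((\rek)^2)$ by \eqref{eq:jacobian}.

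\textbf{(Integrability) and (Mass).} In polar coordinates,
\begin{equation*}
\int_{\Gke}(\rho_\varepsilon^{(k)})^p\,\d\sigma=(\rek)^{2-p}\bigl(1+\mathrm{O}((\rek)^2)\bigr)\,2\pi\int_0^1 (1-t^2)^{-p/2}t\,\d t .
\end{equation*}
The substitution $u=1-t^2$ turns the last integral into $\tfrac12\int_0^1u^{-p/2}\,\d u=\frac{1}{2-p}$, which is finite exactly when $p<2$. This gives (a), and (b) is the case $p=1$. The same integral diverges for $p=2$, so $\rho_\varepsilon^{(k)}\notin L^2$.

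\textbf{(Newtonian potential).} Write $x=y(\rek w_0)$. By \eqref{eq:extrinsic}, $|x-y|^{-1}=(\rek)^{-1}|w_0-w|^{-1}(1+\mathrm{O}((\rek)^2))$. The factors of $\rek$ then cancel, and the integral becomes
\begin{equation*}
\frac{1}{2\pi}\int_{B(0,1)}\frac{\d w}{\sqrt{1-|w|^2}\,|w_0-w|}\bigl(1+\mathrm{O}((\rek)^2)\bigr).
\end{equation*}
The key input is the classical fact from electrostatics of a charged disc (the Weber/Copson density) that for every $w_0$ in the unit disc,
\begin{equation*}
\int_{B(0,1)}\frac{\d w}{\sqrt{1-|w|^2}\,|w_0-w|}=\pi^2 ,
\end{equation*}
a constant independent of $w_0$. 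This yields $\pi/2$ up to a relative error $\mathrm{O}((\rek)^2)$, which is stronger than the stated $\mathrm{O}(\rek)$.

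I expect this constancy identity to be the main obstacle. I would first try to prove it via Copson's representation: write $(1-|w|^2)^{-1/2}$ as an Abel-type integral, reduce to the one-dimensional Abel equation, and combine it with the formula for the potential of a uniform ring. A faster alternative is to quote the known equilibrium-charge result for a disc directly. Checking the case $w_0=0$ as a sanity test gives $2\pi\int_0^1 (1-t^2)^{-1/2}\,\d t=\pi^2$, as required.

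\textbf{(Logarithmic potential).} Using \eqref{eq:extrinsic} again, split
\begin{equation*}
\log|x-y|=\log\rek+\log|w_0-w|+\mathrm{O}((\rek)^2).
\end{equation*}
The first term contributes $\log\rek$ times the mass, which is $\mathrm{O}(\rek|\log\rek|)$ by (b). The second contributes
\begin{equation*}
\rek\int_{B(0,1)}(1-|w|^2)^{-1/2}\bigl|\log|w_0-w|\bigr|\,\d w .
\end{equation*}
This integral is bounded uniformly in $w_0\in\overline{B(0,1)}$: by Hölder's inequality, $(1-|w|^2)^{-1/2}\in L^{q}$ for some $q\in(1,2)$ by (a), while $\log|w_0-\cdot|$ lies in every $L^{q'}(B(0,1))$ with norm uniform in $w_0$. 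So the second term is $\mathrm{O}(\rek)$. Summing the three contributions gives (d).
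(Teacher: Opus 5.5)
Your proof is correct and follows essentially the same route as the paper. You pull back through the exponential chart using Gauss's lemma and the Jacobian bound, rescale to the unit disc, and apply the extrinsic distance estimate. For (c) you use the classical constancy $\int_{B(0,1)}(1-|w|^2)^{-1/2}|w_0-w|^{-1}\,\mathrm{d}w=\pi^2$, which the paper simply cites from Tzou et al., and for (d) you split $\log|x-y|$ and apply Hölder. The only difference is cosmetic: you get uniformity in $w_0$ of the logarithmic integral from a uniform $L^{q'}$ bound on $\log|w_0-\cdot|$, whereas the paper argues by continuity in $w_0$ and compactness.
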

 
\begin{proof}
    Fix $k\in\{1,\ldots,N\}$ and take $y \in \Gke$.
    The function $\rho_\varepsilon^{(k)}$ in \eqref{eq:weight} can be expressed, after
    the change of variables $y=y(z)$, by \eqref{eq:gauss} with
    $z\in B(0,\rek)$, and after the rescaling $z=\rek w$,
    \begin{equation}\label{eq:gk exact}
        \rho_\varepsilon^{(k)}\bigl(y(\rek w)\bigr)
        =\bigl((\rek)^2-|\rek w|^2\bigr)^{-1/2}
        =\frac{1}{\rek}\,\frac{1}{\sqrt{1-|w|^2}}.
    \end{equation}
    Thus, for any $F \colon \Gke \to \real$ regular enough so the integrals are well defined,
    \begin{equation}\label{eq:master}
        \int_{\Gke}\rho_\varepsilon^{(k)}(y)\,F(y)\,\d\sigma(y)
        =\rek\int_{B(0,1)}\frac{J_k(\rek w)}{\sqrt{1-|w|^2}}\,F\bigl(y(\rek w)\bigr)\,\d w.
    \end{equation}
    Using \eqref{eq:jacobian}, 
    we can expand the integrand in \eqref{eq:master} as
    \begin{equation}\label{eq:master expanded}
        \int_{\Gke}\rho_\varepsilon^{(k)}(y)\,F(y)\,\d\sigma(y)
        =\rek\int_{B(0,1)}\frac{F\bigl(y(\rek w)\bigr)}{\sqrt{1-|w|^2}}\,\d w
        +\mathrm{O}\bigl((\rek)^3\bigr)\int_{B(0,1)}\frac{\bigl|F\bigl(y(\rek w)\bigr)\bigr|}{\sqrt{1-|w|^2}}\,\d w.
    \end{equation}
 
    \paragraph{(a) Integrability.}
    For $p \in [1, 2)$, the change of variables $y = y(\rek w)$ together with \eqref{eq:gk exact} and \eqref{eq:jacobian} gives
    \[
        \int_{\Gke}\rho_\varepsilon^{(k)}(y)^p\,\d\sigma(y)
        = (\rek)^{2-p} \int_{B(0,1)} \frac{J_k(\rek w)}{(1-|w|^2)^{p/2}} \, \d w
        = (\rek)^{2-p} \bigl(1+\mathrm{O}((\rek)^2)\bigr) \, 2\pi \int_0^1 \frac{s \, \d s}{(1-s^2)^{p/2}}.
    \]
    The last integral equals $1/(2-p)$, which gives the claimed formula,
    and it diverges for $p = 2$, so that $\rho_\varepsilon^{(k)} \notin L^2(\Gke)$.

    \paragraph{(b) Mass.}
    This is (a) with $p = 1$.

    For the two remaining estimates, we apply \eqref{eq:master expanded} with $F(y)=|x-y|^{-1}$ and
    $F(y)=\log|x-y|$, 
    where $x=y(\rek w_0)\in\Gke$ with~$w_0\in\overline{B(0,1)}$.

    \paragraph{(c) Newtonian potential.}
    By \eqref{eq:extrinsic}, 
    $|x-y(\rek w)|^{-1}=\dfrac{1}{\rek|w_0-w|}\bigl(1+\mathrm{O}((\rek)^2)\bigr)$
    uniformly for~$w \neq w_0$, so \eqref{eq:master expanded} yields
    \begin{equation}\label{eq:self potential rescaled}
        \int_{\Gke}\rho_\varepsilon^{(k)}(y)\,\frac{\d\sigma(y)}{2\pi|x-y|}
        =\bigl(1+\mathrm{O}((\rek)^2)\bigr)\int_{B(0,1)}\frac{\d w}{2\pi\sqrt{1-|w|^2}\,|w_0-w|}.
    \end{equation}
    The integral in the right-hand side of \eqref{eq:self potential rescaled} is well known 
    in potential theory, 
    and its value is independent of~$w_0\in\overline{B(0,1)}$ (see \cite[Equation 4.6]{Tzou2021}).
    Hence, 
    the integral can be evaluated as 
    \[
        \int_{\Gke}\rho_\varepsilon^{(k)}(y)\,\frac{\d\sigma(y)}{2\pi|x-y|}
        = \frac{\pi}{2} \left( 1 + \mathrm{O}((\rek)^2)\right), \qquad \text{ uniformly in } x \in \Gke.
    \]
    \paragraph{(d) Logarithmic potential.}
    By \eqref{eq:extrinsic}, $\log|x-y(\rek w)|=\log\rek+\log|w_0-w|+\mathrm{O}((\rek)^2)$
    uniformly for~$w \neq w_0$, so
    \eqref{eq:master expanded} gives
    \[
        \int_{\Gke}\rho_\varepsilon^{(k)}(y)\,\log|x-y|\,\d\sigma(y)
        =\rek\log\rek\underbrace{\int_{B(0,1)}\frac{\d w}{\sqrt{1-|w|^2}}}_{=\,2\pi}
        +\rek\int_{B(0,1)}\frac{\log|w_0-w|}{\sqrt{1-|w|^2}}\,\d w+\mathrm{O}\bigl((\rek)^3\bigr).
    \]
    The last integral is finite and bounded uniformly in
    $w_0\in\overline{B(0,1)}$: the singularities $(1-|w|^2)^{-1/2}$ and
    $\log|w_0-w|$ lie in $L^p_{\mathrm{loc}}$ for every $p<2$, resp.\ every
    $p<\infty$, so their product is integrable by Hölder's inequality, and
    continuity in $w_0$ with compactness gives the uniform bound. Therefore
    \[
        \int_{\Gke}\rho_\varepsilon^{(k)}(y)\,\log|x-y|\,\d\sigma(y)
        =2\pi\rek\log\rek+\mathrm{O}(\rek)=\mathrm{O}\bigl(\rek\,|\log\rek|\bigr),
    \]
    which is the claimed estimate.
\end{proof}

The previous estimates also give the continuity of the quasi-mode on the holes, required in \cref{hyp:quasi-mode}.
\begin{lem}\label{lem:continuity flat holes}
    Fix $k\in\{1,\ldots,N\}$.
    The function $\psi_\varepsilon^{(k)}$ defined in \eqref{eq:quasi-mode flat holes} is continuous on $\overline{\Gde}$.
\end{lem}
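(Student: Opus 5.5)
We must prove continuity of $\psi_\varepsilon^{(k)}$ on $\overline{\Gde}$, treating separately the points of the other holes $\overline{\Gamma_j^\varepsilon}$, $j\neq k$, and the points of $\overline{\Gke}$ itself.

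\textbf{Other holes.} For $x\in\overline{\Gamma_j^\varepsilon}$ with $j\neq k$, the holes are separated, so $|x-y|\geq \eta>0$ for all $y\in\Gke$. Away from the diagonal, $G(\cdot,y)$ is continuous on $\overline{\Omega}$. Together with the bound $|G(x,y)|\leq K\eta^{-1}$ from \cref{lem:Green David} and $\rho_\varepsilon^{(k)}\in L^1(\Gke)$ (the (Mass) estimate of \cref{lem:integrals circular}), dominated convergence gives continuity of $x\mapsto\int_{\Gke}\rho_\varepsilon^{(k)}(y)G(x,y)\,\d\sigma(y)$ there. This uses that $G$ extends continuously to $x\in\partial\Omega\setminus\{y\}$, which I take from \cite{GreenFunctionDavid}, as in \cref{lem:Green David}.

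\textbf{The hole itself.} On $\overline{\Gke}$, I use the decomposition of \cref{lem:Green scaling exact}: $G(x,y)=\frac{1}{2\pi|x-y|}+g(x,y)$ with $|g(x,y)|\leq C(1+|\log|x-y||)$. I further assume $g$ is continuous off the diagonal, which holds since both $G$ and the Newtonian kernel are.

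\emph{Regular part.} The family $x\mapsto \rho_\varepsilon^{(k)}(y)g(x,y)$ is uniformly integrable in $y$ as $x$ ranges over $\overline{\Gke}$. Indeed, Hölder's inequality with $\rho_\varepsilon^{(k)}\in L^{p}$ for $p<2$ and $|\log|x-\cdot||\in L^{q}$ for all $q<\infty$, uniformly in $x$, gives a uniform bound in $L^{1+\delta}$. Vitali's theorem then yields continuity of the regular part, since $g(x_n,y)\to g(x,y)$ for a.e. $y$.

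\emph{Singular part.} In the rescaled coordinates of \eqref{eq:master}, this part reads
\[
\frac{1}{2\pi}\int_{B(0,1)}\frac{J_k(\rek w)\,\rek}{\sqrt{1-|w|^2}\,|y(\rek w_0)-y(\rek w)|}\,\d w .
\]
By \eqref{eq:extrinsic} the kernel is comparable to $1/(\rek|w_0-w|)$. The integrand is therefore dominated by $C(1-|w|^2)^{-1/2}|w_0-w|^{-1}$, which is again uniformly integrable in $w_0\in\overline{B(0,1)}$: the Hölder exponents $p<2$ and $q<2$ can be chosen with $1/p+1/q<1$, e.g. $p=q=3/2+$, giving an $L^{1+\delta}$ bound uniform in $w_0$. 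Since the integrand converges pointwise as $w_0^n\to w_0$ for $w\neq w_0$, Vitali's theorem gives continuity in $w_0$. As $w_0\mapsto y(\rek w_0)$ is a homeomorphism onto $\overline{\Gke}$, continuity in $x$ follows.

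\textbf{Main obstacle.} The delicate point is the uniform integrability near the rim $|w_0|=1$, where both singularities $(1-|w|^2)^{-1/2}$ and $|w_0-w|^{-1}$ can concentrate at the same place. I expect the choice of Hölder exponents to handle it, since the exponents $3/2+3/2$ leave room; checking the uniformity of the $L^q$ norm of $|w_0-\cdot|^{-1}$ on the bounded disk is routine.
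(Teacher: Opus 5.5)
There is a genuine gap in the singular part, exactly at the step you flagged as the main obstacle. Your treatment of the other holes and of the regular part $g$ matches the paper's and is fine.

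\textbf{The gap.} The H\"older claim cannot hold. If $p<2$ and $q<2$, then $1/p>1/2$ and $1/q>1/2$, so $1/p+1/q>1$; for $p=q=3/2$ the sum is $4/3$. H\"older therefore only places $(1-|w|^2)^{-1/2}|w_0-w|^{-1}$ in $L^r$ with $r<1$. That gives no integrability at all, let alone a uniform $L^{1+\delta}$ bound.

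Choosing the exponents more cleverly does not help. On the disk, $(1-|w|^2)^{-1/2}\notin L^2$ and $|w_0-w|^{-1}\notin L^2$, so no splitting of this kind can reach an exponent above $1$.

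\textbf{A direct fix.} The uniform bound you need is in fact true. It holds only because the two singularities have different geometry: one lies along the whole rim, the other at a single point. Any argument has to use this anisotropy.

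For instance, take $1<r<4/3$ and $|w_0|\geq 1/4$. Write $w=se^{i\theta}$ and $w_0=s_0e^{i\theta_0}$, and use $|w-w_0|\geq c\,(|s-s_0|+|\theta-\theta_0|)$ on $\{|w|\geq 1/2\}$. Integrating first in the angle gives $\int(|s-s_0|+|\phi|)^{-r}\,\mathrm{d}\phi\leq C|s-s_0|^{1-r}$. What remains is $\int_{1/2}^1(1-s)^{-r/2}|s-s_0|^{1-r}\,\mathrm{d}s$. This is bounded uniformly in $s_0$ because $r/2+(r-1)<1$, so one-dimensional H\"older now works. The regions $\{|w|<1/2\}$ and $\{|w_0|<1/4\}$ are harmless. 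With this $L^r$ bound, uniform in $w_0$, your Vitali argument goes through.

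\textbf{The paper's route.} The paper avoids uniform integrability of the product altogether. It writes $|y(z_0)-y(z)|=|z_0-z|\,Q(z_0,z)^{1/2}$ and freezes the smooth factor $\mathcal{K}(z_0,z)=J_k(z)Q(z_0,z)^{-1/2}$ at $z=z_0$. It then uses that the flat potential $\int_{B(0,r_\varepsilon^{(k)})}\rho_{\mathrm{flat}}(z)\,|z_0-z|^{-1}\,\mathrm{d}z=\pi^2$ is constant on the closed disk; this is the same classical fact behind the (Newtonian potential) estimate of \cref{lem:integrals circular}. The remainder kernel $(\mathcal{K}(z_0,z)-\mathcal{K}(z_0,z_0))/|z_0-z|$ is bounded because $\mathcal{K}$ is Lipschitz in its second variable. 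Plain dominated convergence with the $L^1$ majorant $C\rho_{\mathrm{flat}}$ then suffices.

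Either fix closes the gap, but as written your proof does not establish the uniform integrability it relies on.
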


\begin{proof}
    Recall that, by local elliptic regularity,
    $G(\cdot, y)$ is smooth on $\overline{\Omega} \setminus \{y\}$ for every $y \in \partial \Omega$.
    For $j \neq k$, the hole $\overline{\Gamma_j^\varepsilon}$ is at a positive distance from $\overline{\Gke}$,
    so that $\psi_\varepsilon^{(k)}$ is continuous on $\overline{\Gamma_j^\varepsilon}$ by dominated convergence,
    as $\rho_\varepsilon^{(k)} \in L^1(\Gke)$.
    On $\overline{\Gke}$,
    we use the parametrization $y(\cdot)$ of $\partial \Omega$ around $\xk$ introduced at the beginning of this subsection,
    for which $y^{-1}(\Gke) = B(0, \rek)$ by \eqref{eq:gauss},
    and we decompose $G = \frac{1}{2\pi |x - y|} + g(x, y)$ as in \cref{lem:Green scaling exact}.
    As $y(\cdot)$ is a smooth embedding with $\d y(0) = \I$,
    one can write $|y(z_0) - y(z)| = |z_0 - z| \, Q(z_0, z)^{1/2}$ with $Q$ smooth and positive on $\overline{B(0, \rek)}^2$.
    Denoting $\rho_{\mathrm{flat}}(z) = ((\rek)^2 - |z|^2)^{-1/2}$ and $\mathcal{K}(z_0, z) = J_k(z) \, Q(z_0, z)^{-1/2}$,
    the Newtonian part of $\psi_\varepsilon^{(k)}$ at $x = y(z_0)$ is, up to the factor $1/\pi^2$,
    \[
        \int_{B(0, \rek)} \rho_{\mathrm{flat}}(z) \, \frac{\mathcal{K}(z_0, z)}{|z_0 - z|} \, \d z
        = \mathcal{K}(z_0, z_0) \int_{B(0, \rek)} \frac{\rho_{\mathrm{flat}}(z)}{|z_0 - z|} \, \d z
        + \int_{B(0, \rek)} \rho_{\mathrm{flat}}(z) \, \frac{\mathcal{K}(z_0, z) - \mathcal{K}(z_0, z_0)}{|z_0 - z|} \, \d z .
    \]
    The first integral equals $\pi^2$ for every $z_0 \in \overline{B(0, \rek)}$ (see \cite[Equation 4.6]{Tzou2021}),
    and $z_0 \mapsto \mathcal{K}(z_0, z_0)$ is continuous.
    In the second integral, $\mathcal{K}$ is Lipschitz in its second variable uniformly in $z_0$,
    so that the integrand is bounded by $C \rho_{\mathrm{flat}} \in L^1(B(0, \rek))$ and continuous in $z_0$ for $z \neq z_0$:
    this integral is continuous in $z_0$ by dominated convergence.
    Finally, $x \mapsto g(x, y)$ is continuous on $\overline{\Omega} \setminus \{y\}$,
    and $|g(x, y)| \leq C (1 + |\log|x - y||)$ by \cref{lem:Green scaling exact}.
    As $\rho_\varepsilon^{(k)} \in L^{p}(\Gke)$ for some $p \in (1, 2)$ by the (Integrability) estimate of \cref{lem:integrals circular}
    and $y \mapsto \log|x - y|$ is bounded in $L^{p'}(\Gke)$ uniformly in $x \in \overline{\Gke}$, with $p' = p/(p-1)$,
    the family $\bigl(\rho_\varepsilon^{(k)} g(x, \cdot)\bigr)_{x \in \overline{\Gke}}$ is uniformly integrable,
    and the Vitali convergence theorem shows that $x \mapsto \int_{\Gke} \rho_\varepsilon^{(k)}(y) g(x, y) \, \d\sigma(y)$ is continuous on $\overline{\Gke}$.
    Hence $\psi_\varepsilon^{(k)}$ is continuous on $\overline{\Gde}$.
\end{proof}
 
\subsection{In the case of the slit holes from \texorpdfstring{\cref{sec:slit}}{Section 4}}
In this subsection, 
we go back to the notation of \cref{sec:slit}
and prove a technical lemma needed in the proof of \cref{lem:edp quasi-mode slit}.

\begin{lem}\label{lem:computation slit}
    For a fixed $k\in\{1,\ldots,N\}$ and $\varepsilon > 0$, 
    let $\Gke$ be the slit hole defined in \cref{sec:slit}, 
    and $G$ the Green function defined in \cref{def:Neumann Green}.
    Then for any $x \in \Gke$,
    it holds that 
    in the limit $\varepsilon \to 0$,
    \[
        \int_{\Hk} G(x, y) \, \d \sigma(y) = - \frac{1}{\pi}\log\left(w_\varepsilon^{(k)}\right) + \mathrm{O}(1).
    \]
\end{lem}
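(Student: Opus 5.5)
Fix $x \in \Gke$ and write $w = w_\varepsilon^{(k)}$. The plan is to split the Green function with \cref{lem:Green scaling exact} as $G(x,y) = \frac{1}{2\pi|x-y|} + g(x,y)$, where $|g(x,y)| \leq C(1+|\log|x-y||)$. The $g$-part is then harmless. Since $\Hk$ is a smooth arc and $y \mapsto |\log|x-y||$ is integrable along a curve uniformly in $x$, we get $\int_{\Hk} |g(x,y)|\,\d y = \mathrm{O}(1)$ uniformly in $x$ near $\Hk$. This uses that $\int_{-\ell_k}^{\ell_k} |\log|s-s_0||\,\d s$ is bounded, together with $|x-y| \geq c|s_x - s|$ for the arc-length parameter. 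Everything therefore reduces to the line potential
\[
    I(x) = \frac{1}{2\pi}\int_{-\ell_k}^{\ell_k} \frac{\d s}{|x-\gamma_k(s)|},
\]
and the claim amounts to $I(x) = \frac{1}{\pi}|\log w| + \mathrm{O}(1)$ uniformly on $\Gke$.

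I would first treat the straight case, where $\gamma_k(s) = s\,e_1$ and the frame of \eqref{eq:decomposition X} is exact. There the classical identity for a uniformly charged segment gives
\[
    \int_{-\ell}^{\ell} \frac{\d s}{|x-s e_1|} = \log\frac{r_+ + r_- + 2\ell}{r_+ + r_- - 2\ell},
\]
where $r_\pm$ are the distances from $x$ to the endpoints $\pm\ell e_1$. The equipotentials are exactly the prolate spheroids with foci at $\pm\ell$, i.e.\ $r_+ + r_- = 2a$. On the hole \eqref{eq:slit def}, where $a = a_\varepsilon^{(k)}$, the potential is constant:
\[
    \log\frac{a+\ell}{a-\ell} = \log\frac{(a+\ell)^2}{w^2}
\]
by \eqref{eq:slit major axis}. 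This equals $2|\log w| + \log(4\ell^2) + \mathrm{o}(1)$, so $I = \frac{1}{\pi}|\log w| + \mathrm{O}(1)$, which is exactly the claimed constant. This explains why the hole is chosen to be this spheroid.

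Next comes the curvature correction, which I expect to be the main obstacle. I would compare $|x-\gamma_k(s)|$ with the straightened distance
\[
    D(s) = \bigl((s_x + t_x - s)^2 + |\rho_x|^2\bigr)^{1/2}.
\]
By the decomposition \eqref{eq:decomposition X} and a Taylor expansion of $\gamma_k$ around $s_x$, we have $|x-\gamma_k(s)| = D(s)\bigl(1+\mathrm{O}(|s-s_x| + |\rho_x| + |t_x|)\bigr)$. Note that $t_x \neq 0$ only near the tips, and there $|t_x| \leq a-\ell = \mathrm{O}(w^2)$. Consequently
\[
    \bigl||x-\gamma_k(s)|^{-1} - D(s)^{-1}\bigr| \leq C\,\frac{|s-s_x| + |\rho_x| + |t_x|}{D(s)} \leq C',
\]
since $D(s) \geq \max(|s-s_x-t_x|, |\rho_x|)$ and $|t_x|$ is negligible except within $\mathrm{O}(w^2)$ of the tip. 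Integrating over $s \in [-\ell_k,\ell_k]$ gives an $\mathrm{O}(1)$ error. The straight formula applied to $D$ then yields the leading term, with $x$ satisfying the spheroid equation in these coordinates. The delicate points are the uniformity near the tips, where $\pi_k(x)$ is an endpoint, and the validity of the Taylor bound when $|s-s_x|$ is not small. For the latter I would split the integral at $|s-s_x| = \delta$ for a fixed $\delta$: the far part contributes $\mathrm{O}(1)$ trivially, since both kernels are bounded there by $C/\delta$.

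Combining the three pieces gives the lemma.
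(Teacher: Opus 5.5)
Your proposal is correct and follows essentially the same route as the paper: the split of $G$ into $\frac{1}{2\pi|x-y|}$ plus a logarithmically bounded remainder, the same flattened distance $D(s)$ (called $b(s)$ there) evaluated through the explicit segment potential whose equipotentials are the confocal spheroids, and the same near/far control of the curvature correction. The tip issue you flag is handled in the paper by the inequality $|t_x-(s-s_x)|\geq|s-s_x|$: when the foot point is an endpoint, $t_x$ and $s-s_x$ have opposite signs. This gives $D(s)\geq|s-s_x|$, so that $\bigl||x-\gamma_k(s)|^{-1}-D(s)^{-1}\bigr|$ is bounded by a constant depending only on the curvature of $\mathcal{H}_k$, with no need to treat an $\mathrm{O}(w^2)$ region separately.
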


\begin{proof}
    In this proof we fix $k \in \{1, \ldots, N\}$, 
    we fix $x \in \Gke$, drop the index $k$ for clarity
    and denote $u$ the integral in the statement of the lemma.
    First notice that $G$ can be decomposed as 
    \begin{equation}\label{eq:estimate of u first step}
        u(x) =  \int_{\Hk} G(x, y) \, \d \sigma(y)= 
        \int_{\mathcal{H}} \left[ \frac{1}{2\pi|x-y|} + r(x, y)\right] \, \d \sigma(y),
    \end{equation}
    with $r : \Gke \times \mathcal{H} \to \mathbb{R}$ the remainder.  
    We postpone the bound on the integral over $r$ to the end of the proof,
    and first treat the singular contribution
    \begin{equation}\label{eq:def u0}
        u_0(x) = \frac{1}{2\pi} \int_{\mathcal{H}} \frac{\d \sigma(y)}{|x - y|}
        = \frac{1}{2\pi} \int_{-\ell}^{\ell} \frac{\d s}{|x - \gamma(s)|},
    \end{equation}
    where we used the parametrization of $\mathcal{H}$ introduced in \cref{sub:hole shapes notation}.
    Notice that as $\mathcal{H}$ is a smooth curve, 
    its parametrization $\gamma$ can be chosen to be smooth and can be Taylor expanded around 
    $s_x \in [-\ell, \ell]$, 
    the point such that $\gamma(s_x)$ is the nearest point on $\mathcal{H}$ to $x$,
    as 
    \begin{equation}\label{eq:frenet expansion}
        \gamma(s) = \gamma(s_x) + \sigma \tau(s_x) + E(s, s_x),
        \qquad \text{ with } |E(s, s_x)| \leq K_1 \sigma^2,
    \end{equation}
    writing $\sigma = s - s_x$ and with $K_1 > 0$ depending only on the curvature of $\mathcal{H}$.
    Then, 
    it holds that 
    \begin{equation}\label{eq:x minus gamma frenet}
        x - \gamma(s) = (t - \sigma)\tau(s_x) + \rho - E(s, s_x).
    \end{equation}     
    It is now natural to introduce the flat slit distance, 
    defined as 
    \begin{equation}\label{eq:def b}
        b(s) = \bigl| (t - \sigma)\tau(s_x) + \rho \bigr|
        = \sqrt{\rho^2 + (t - \sigma)^2} = \sqrt{\rho^2 + (z - s)^2},
    \end{equation}
    where $z = t + s_x$, 
    and to split the integral in \eqref{eq:def u0} as
    \begin{equation}\label{eq:split u0}
        u_0(x) = \frac{1}{2\pi} \underbrace{\int_{-\ell}^{\ell} \frac{\d s}{b(s)}}_{=: u_{\mathrm{flat}}(x)}
        + \frac{1}{2\pi} \underbrace{\int_{-\ell}^{\ell} \left( \frac{1}{|x - \gamma(s)|} - \frac{1}{b(s)} \right) \d s}_{=: u_{\mathrm{rest}}(x)},
    \end{equation}
    and estimate the two terms separately.

    \paragraph{The flat integral $u_{\mathrm{flat}}$.}
    By \eqref{eq:def b}, 
    notice that 
    $u_{\mathrm{flat}}$ can be computed directly in the case $\rho > 0$, 
    \begin{equation}\label{eq:prolate arcsinh}
        \int_{-\ell}^{\ell} \frac{\d s}{\sqrt{\rho^2 + (z - s)^2}}
        = \arcsinh\left( \frac{\ell - z}{\rho}\right) + \arcsinh\left( \frac{\ell + z}{\rho}\right)
        = \log\frac{(\ell - z) + r_-}{\rho} + \log\frac{(\ell + z) + r_+}{\rho},
    \end{equation}
    where $r_- = \sqrt{\rho^2 + (z - \ell)^2}$ and $r_+ = \sqrt{\rho^2 + (z + \ell)^2}$
    are the distances from the point to the two foci.
    Notice that, using $(a-b)(a+b) = a^2 - b^2$, we have
    \[
        \Big( r_\pm(\rho, z) - (\ell \pm z)\Big)\Big( r_\pm(\rho, z) + (\ell \pm z)\Big) = \rho^2.
    \]
    Summing the previous equation for $r_+$ and $r_-$, we obtain the following identity (dropping the dependence in $\rho$ and $z$ for readability):
    \[
        r_+ + r_- - 2\ell 
        = \frac{\rho^2}{r_+ + (\ell + z)} + \frac{\rho^2}{r_- + (\ell - z)}
        =\rho^2 \frac{r_+ + r_- + 2\ell}{\left(r_+ + (\ell + z)\right)\left(r_- + (\ell - z)\right)}.
    \]
    Finally, the explicit expression of $u_{\mathrm{flat}}$ in \eqref{eq:prolate arcsinh} can be rewritten as:
    \begin{equation}\label{eq:u explicit cylindrical 2}
        \begin{aligned}
            u_{\mathrm{flat}}(x)  
            & = \log\left( \frac{\left(r_+(\rho, z)  + (\ell + z)\right)\left(r_-(\rho, z) + (\ell - z)\right)}{\rho^2} \right)\\
            & = - \log\left( \frac{r_+(\rho, z)  + r_-(\rho, z)  - 2\ell}{r_+(\rho, z)  + r_-(\rho, z)  + 2\ell}\right)
        \end{aligned}
    \end{equation}
    By \eqref{eq:slit def}, 
    the point $x \in \Gke$ lies on the ellipse of semi-axes $a = \sqrt{\ell^2 + w^2}$ and $w$ in the $(z, \rho)$ plane,
    where $w = w_\varepsilon^{(k)}$ and $a = a_\varepsilon^{(k)}$ is defined in \eqref{eq:slit major axis}.
    As $a^2 - w^2 = \ell^2$, its foci are $z = \pm \ell$, 
    and the sum of the distances to the foci is the major axis length, namely
    \[
        r_+(\rho, z)  + r_-(\rho, z)  = 2 a, \qquad \text{ for } x \in \Gke. 
    \]
    Thus, 
    on $\Gke$,
    using \eqref{eq:slit major axis},
    we have 
    \begin{equation}\label{eq:u flat final}\begin{aligned}
        u_{\mathrm{flat}}(x)  
        & = - \log\left( \frac{2a - 2\ell}{2a + 2\ell}\right)
        = \log\left( \frac{(a + \ell)^2}{w^2}\right) \\
        & = - 2 \log(w) + 2 \log\left( a + \ell \right)
        = - 2 \log(w) + 2 \log(2\ell) + \mathrm{O}(w^2),
    \end{aligned}
    \end{equation}
    which is the expected result.
    For the case $\rho = 0$, 
    the exact same computation can be done, 
    noticing that $x \not \in \mathcal{H}$ and $\rho = 0$ implies that $|z| > \ell$,
    then the integral can be computed directly
    \begin{align*}
        u_{\mathrm{flat}}(x)  
            & = \int_{-\ell}^{\ell} \frac{1}{|z - s|}\,  \d s\\
            & = \log\left| \frac{z+\ell}{z - \ell}\right|.
    \end{align*}
    On $\Gke$ with $\rho = 0$ one has $|z| = a$, so this equals $\log\frac{a+\ell}{a-\ell} = \log\frac{(a+\ell)^2}{w^2}$,
    which is exactly the first line of \eqref{eq:u flat final}.

    \paragraph{The difference $u_{\mathrm{rest}}$.}
    By the very definition of the foot point $s_x$, the tangential offset $t$ and the
    running variable $\sigma$ are always aligned so that
    \begin{equation}\label{eq:foot point alignment}
        |t - \sigma| \geq |\sigma| \qquad \text{for every admissible } \sigma.
    \end{equation}
    Indeed, if $\gamma(s_x)$ is interior to $\mathcal{H}$, 
    one has that $t = 0$,
    and \eqref{eq:foot point alignment} is an equality;
    if the foot point is an endpoint, say $s_x = \ell$, then $\sigma = s - \ell \leq 0$
    while optimality forces $t = \langle x - \gamma(\ell), \tau(\ell)\rangle \geq 0$,
    so $t - \sigma \geq |\sigma|$.
    Consequently, from \eqref{eq:def b},
    \begin{equation}\label{eq:b lower bound}
        b(s) \geq |t - \sigma| \geq |\sigma|.
    \end{equation}
    Moreover, by \eqref{eq:x minus gamma frenet} and the reverse triangle inequality,
    \begin{equation}\label{eq:a b close}
        \bigl| |x - \gamma(s)| - b(s) \bigr| \leq |E(s, s_x)| \leq K_1 \sigma^2.
    \end{equation}
    Fix now $\sigma_0 = (2K_1)^{-1}$, which depends only on $\mathcal{H}$.
    For $|\sigma| > \sigma_0$, $b(s) \geq |\sigma| > \sigma_0$ by \eqref{eq:b lower bound}.
    Moreover, $\mathcal{H}$ is a compact embedded curve, so there is $c_{\mathcal{H}} > 0$ with
    $|\gamma(s) - \gamma(s')| \geq c_{\mathcal{H}} |s - s'|$,
    and $|x - \gamma(s_x)| \leq w_\varepsilon^{(k)} \to 0$ for $x \in \Gke$, 
    hence $|x - \gamma(s)| \geq c_{\mathcal{H}} \sigma_0 / 2$ for $\varepsilon$ small.
    Both quantities are thus bounded below by a fixed
    constant $c(\sigma_0) > 0$,
    so the integrand is bounded by $2/c(\sigma_0)$.
    For $|\sigma| \leq \sigma_0$, \eqref{eq:b lower bound} and \eqref{eq:a b close} give
    $K_1 \sigma^2 \leq \tfrac{1}{2}|\sigma| \leq \tfrac{1}{2} b(s)$,
    hence $|x - \gamma(s)| \geq \tfrac{1}{2} b(s)$, and therefore
    \begin{equation}\label{eq:diff pointwise close}
        \left| \frac{1}{|x - \gamma(s)|} - \frac{1}{b(s)} \right|
        = \frac{\bigl| |x - \gamma(s)| - b(s)\bigr|}{|x - \gamma(s)|\, b(s)}
        \leq \frac{K_1 \sigma^2}{\tfrac{1}{2} b(s)^2}
        \leq \frac{2 K_1 \sigma^2}{\sigma^2} = 2 K_1.
    \end{equation}
    
    In either case the integrand of $u_{\mathrm{rest}}$ is bounded uniformly in $x$,
    and since $\mathcal{H}$ has finite length,
    \begin{equation}\label{eq:Delta bounded}
        u_{\mathrm{rest}}(x) = \mathrm{O}(1).
    \end{equation}
    Combining \eqref{eq:split u0}, \eqref{eq:u flat final} and \eqref{eq:Delta bounded},
    \begin{equation}\label{eq:u0 final}
        u_0(x) = -\frac{1}{\pi} \log(w) + \mathrm{O}(1).
    \end{equation}

    \paragraph{The remainder r.}
    From \cref{lem:Green scaling exact},
    there exist constants $\delta > 0$ and $C_\delta > 0$ such that for any $ y \in \mathcal{H}$ with $|x - y| < \delta$,
    one has that 
    \[  
            |r(x, y)| \leq C_\delta\Bigl(1 + \bigl|\log|x - y|\bigr|\Bigr).
    \]
        Furthermore, 
        from \cref{lem:Green David},
        for any $y \in \mathcal{H}$ with $|x - y| \geq \delta$,
        one has that 
    \[
            |r(x, y)| \leq |G(x,y)| + \frac{1}{2\pi|x-y|} \leq \Bigl(K + \frac{1}{2\pi}\Bigr) \delta^{-1}.
    \]
        Then, 
        the integral of the function $r$ on $\mathcal{H}$ can be bounded as
    \begin{equation}\label{eq:estimate of r}
    \int_{\mathcal{H}} |r(x, y)| \, \d \sigma(y) 
    \leq C_\delta \int_{\mathcal{H} \cap B(x, \delta)} \Bigl(1 + \bigl|\log|x - y|\bigr|\Bigr) \, \d \sigma(y) + \Bigl(K + \frac{1}{2\pi}\Bigr) \delta^{-1} |\mathcal{H}|.
    \end{equation}

    It remains to check that the right-hand side of \eqref{eq:estimate of r} is $\mathrm{O}(1)$.
    The far term $(K + \frac{1}{2\pi})\delta^{-1}|\mathcal{H}|$ is a fixed constant, and so is $C_\delta |\mathcal{H}|$.
    For the near term, we may take $\delta$ smaller than the tubular radius of $\mathcal{H}$,
    so that $\mathcal{H} \cap B(x, \delta)$ is a single subarc around $s_x$ with $|\sigma| \leq C\delta < 1$;
    on it $|x - \gamma(s)| < \delta < 1$, hence $\bigl|\log|x-\gamma(s)|\bigr| = -\log|x - \gamma(s)|$.
    Using $|x - \gamma(s)| \geq \tfrac{1}{2} b(s)$ from the previous paragraph,
    \[
        \int_{\mathcal{H} \cap B(x, \delta)} \bigl|\log|x - y|\bigr| \, \d \sigma(y)
        \leq \log 2 \cdot |\mathcal{H}| - \int_{|\sigma| \leq C\delta} \log b(s) \, \d s.
    \]
    By \eqref{eq:def b} and the substitution $\eta = t - \sigma$, the last integral equals
    \[
        - \int \log b(s) \, \d s
        = -\frac{1}{2} \int \log\bigl( \rho^2 + \eta^2 \bigr) \, \d \eta,
    \]
    whose explicit primitive $\eta \log(\rho^2 + \eta^2) - 2\eta + 2\rho \arctan(\eta / \rho)$
    stays bounded over the fixed compact $\eta$-range as $\rho \to 0$,
    since $\eta \log(\rho^2 + \eta^2)$ is bounded near $\eta = 0$ and $\arctan$ is bounded.
    Hence the near term is also $\mathrm{O}(1)$, and $\int_{\mathcal{H}} r(x, y)\, \d\sigma(y) = \mathrm{O}(1)$.

    \medskip
    Collecting \eqref{eq:u0 final} and this last bound in \eqref{eq:estimate of u first step},
    \[
        u(x) = -\frac{1}{\pi} \log(w) + \mathrm{O}(1),
    \]
    which is the announced estimate.
\end{proof}

\printbibliography

\end{document}